\definecolor{LimeGreen}{cmyk}{0.50, 0.5, 1, 0}
\newcommand{\strokedint}{\fint}
\newcommand{\eps}{\varepsilon} 
\newcommand{\dx}{\, {\rm d}x}
\newcommand{\dy}{\, {\rm d}y}
\newcommand{\dz}{\, {\rm d}z}
\newcommand{\ds}{\, {\rm d}s}
\newcommand{\dP}{\, {\rm d}P}
\newcommand{\e}{\varepsilon}
\DeclareMathOperator{\dist}{dist}
\newcommand{\Sph}{{\mathbb S}}
\newcommand{\res}{\mathop{\hbox{\vrule height 7pt width 0.5pt depth 0pt
			\vrule height 0.5pt width 6pt depth 0pt}}\nolimits}
\theoremstyle{plain}
\newtheorem{theorem}{Theorem}[section]
\newtheorem{lemma}[theorem]{Lemma}
\newtheorem{proposition}[theorem]{Proposition}
\newtheorem{corollary}[theorem]{Corollary}
\numberwithin{equation}{section}
\let \O=\Omega
\newcommand{\N}{\mathbb{N}}
\newcommand{\Z}{\mathbb{Z}}
\newcommand{\Q}{\mathbb{Q}}
\newcommand{\R}{\mathbb{R}}
\newcommand{\I}{\mathcal{I}}
\newcommand{\A}{\mathcal{A}}
\renewcommand{\S}{\mathbb{S}}
\renewcommand{\L}{\mathcal{L}}
\renewcommand{\H}{\mathcal{H}}
\newcommand{\dHn}{\, {\rm d}\H^{n-1}}
\newcommand{\T}{\mathcal{T}}
\newcommand{\x}{\times }
\newcommand{\m}{\mathbf{m}}
\newcommand{\defas}{:=}
\theoremstyle{definition}
\newtheorem{definition}[theorem]{Definition}
\theoremstyle{remark}
\newtheorem{remark}[theorem]{Remark}
\renewcommand{\tilde}{\widetilde}
\renewcommand{\d}{\, \mathrm{d}}
\title[Non-local approximation of free-discontinuity problems in linear elasticity]
{Non-local approximation of free-discontinuity problems in linear elasticity and application to stochastic homogenisation}
\author[R. Marziani]{Roberta Marziani}
\address[R. Marziani]{Faculty of Mathematics, TU Dortmund University, Vogelpothsweg 87, 44227 Dortmund, Germany.}
\email[R. Marziani]{roberta.marziani@tu-dortmund.de}
\author[F. Solombrino]{Francesco Solombrino}
\address[F. Solombrino]{Dipartimento di Matematica e Applicazioni ``R.~Caccioppoli'', Universit\`a di Napoli Federico II, via Cintia, 80126 Napoli, Italy.}
\email[F. Solombrino]{francesco.solombrino@unina.it}
\begin{document}

\begin{abstract}
We analyse the $\Gamma$-convergence of general non-local convolution type functionals with varying densities depending on the space variable and on the symmetrized gradient. The limit is a local free-discontinuity functional, where the bulk term can be completely characterized in terms of an asymptotic cell formula. From that, we can deduce an homogenisation result in the stochastic setting.
\end{abstract}

\maketitle
{\small
	\noindent \keywords{\textbf{Keywords:} Non-local approximation, variational fracture, free discontinuity problems, functions of bounded deformations, $\Gamma$-convergence, deterministic and stochastic homogenisation.}
	
	\medskip
	
	\noindent \subjclass{\textbf{MSC 2010:} 
		49J45, 
		49Q20,  
		74Q05,  
		74R10, 
		70G75. 
	}
}
\section{Introduction}
This paper is focused on the approximation of brittle fracture energies for linearly elastic materials, by means of {\it non-local functionals} defined on Sobolev spaces. The asymptotic behavior of these functionals will simultaneously show the emergence both of effective energies for the elastic deformation (which may be, e.g., the output of homogenization), and of Griffith-type surface energies accounting for crack formation. In turn, this result can be further generalized to the setting of stochastic homogenization with fracture.

Precisely our results will extend the range of application of the recent papers \cite{FarSciSol, SS21} while also providing some relevant technical improvement. We briefly comment on these previous contributions, in order to introduce our results.
There, an approach originally devised by Braides and Dal Maso~\cite{BraDal97} 
for the approximation of the Mumford-Shah functional has been generalized to the linearly elastic setting. Namely, it was shown that, for a given bounded increasing function $f\colon \R^+\to \R^+$  the energies
\begin{equation}\label{1intro}
\frac1{\e_k}\int_U f\Big(\e_kW(e(u))*\rho_k(x)
\Big)\dx\,,
\end{equation}
$\Gamma$-converge, in the $L^1(U)$-topology, to the functional 
\begin{equation}\label{2intro}
\alpha\int_{U} W(e(u))\,\mathrm{d}x  + \beta \int_{J_u}\phi_\rho(\nu_u)\dHn\,,
\end{equation}
with $\alpha=f^\prime(0)$ and $\beta=\lim_{t\to +\infty}f(t)$. Above, $\rho_k$ are rescaled convolution kernels with unit mass and compact convex symmetrical  support $S$, $\phi_\rho$ is (twice) the support function of $S$ (see \eqref{def:phi} for its precise definition), $W(e(u))$ is a convex elastic energy with superlinear $p$ growth
 depending on the symmetrized gradient $e(u)$ of a  vector-valued displacement $u$, whose  jump set is denoted by $J_u$. Notice that the effective domains of the approximations and of the limit are different. Actually \eqref{1intro}  is finite on the Sobolev space $W^{1,p}(U;\R^n)$, while the energy space of \eqref{2intro} is the one of generalized functions with bounded deformation $GSBD^p(U)$, introduced in \cite{DM2013}.  \\

We stress that the above results allowed one for a general (convex) bulk energy. The proof strategy cannot rely, at least when estimating the bulk part, on any slicing procedure. The latter is instead successful in the particular case  $W(\xi)=|\xi|^p$, considered for instance in \cite{Negri2006}. We also remark that 
 the results of \cite{FarSciSol, SS21} were obtained under an additional structural assumption on the kernels $\rho_k$, which have to be radial with respect to the norm induced by $S$.  In the particular case considered in \cite{Negri2006}, this restriction was instead not needed.\\

A natural extension of the aforementioned models allows one to include
 an explicit dependence on $k$ and on the space variable of the energy density. This amounts to consider functionals of the form 
\begin{equation}\label{3intro}
	\frac1{\e_k}\int_U f\Big(\e_kW_k(\cdot,e(u))*\rho_k(x)
	\Big)\dx\,,
\end{equation}
whose limit behaviour is the object of the present paper. This more general setting is indeed suitable for further applications, if one thinks to the mechanical counterpart of the model. Indeed energy densities of type $W_k(y,M)$, where $y$ is the position 
in the reference configuration, are customary when dealing with heterogeneous material with some microstructures.  The prototypical example is the case of \textit{homogenisation}, that is, when $W_k(y,M)=W(\frac{y}{\delta_k},M)$ with $\delta_k\searrow0$. \\

The main result of our paper is contained in Theorem \ref{thm:main-theorem}. There we show that the functionals in \eqref{3intro} $\Gamma$-converge to a limit energy of the form 
\begin{equation}\label{4intro}
	\alpha\int_{U} W(x,e(u))\,\mathrm{d}x  + \beta \int_{J_u}\phi_\rho(\nu_u)\dHn\,.
\end{equation}
Above, the limit bulk density $W$ can be characterised in terms of cell formula (see \eqref{eq:cell-formula-inf}-\eqref{eq:cell-formula-sup}). Remarkably, that coincides exactly with the asymptotic formula that one would obtain by considering the limit behaviour of the local energies 
$
\int_UW_k(x,e(u))\dx
$
in the Sobolev space $W^{1,p}(U)$. Hence, a decoupling effect between bulk and surface contribution occurs, since the volume energy only depends on $f$ trough its derivative at the origin. A similar effect has been observed in \cite{Cortesani} where the analogue of  \eqref{3intro} for energies depending on the full deformation gradient was taken into account. On the one hand, the possibility of using smooth truncations (a tool which is not available in $GSBD$) allowed the author there to replace $f$ by a sequence $f_k$ and to derive more general surface energies in the limit. On the other hand, the precise characterisation of the volume energy density was obtained at the expense of an additional technical condition on the $W_k$'s (the so-called \textit{stable} $\gamma$-convergence). It actually turns out, as an output of our proof strategy, that this extra assumption can be dropped out (see  Appendix \ref{sec:appendix}). Thus, our results also permit some improvement in the previous literature about non-local approximation of free-discontinuity problems.\\

We now come to the description of our proof technique. The most difficult point is  the lower bound for the bulk contribution. This is done  in Proposition \ref{blow-up}, by means of a localisation and blow-up procedure which contains some elements of novelty in the non-local setting. More precisely we  consider the blow-up of sequences with equi-bounded energies at a Lebesgue point for the limit energy.  
A crucial task is to gain a uniform control on the $L^p$ norm of the symmetrized gradients of the blow-up functions up to sets with vanishing perimeter. This allows us to apply \cite[Lemma 5.1]{FPS} (which relies on the Korn-type inequality of \cite{CCS}): we can substitute, with almost no change in the energy, the above mentioned sequence with a more regular one  bounded in $W^{1,p}$. Exploiting the properties of $f$ we are then reconducted to analyse the limit behaviour on small squares of a local energy in $W^{1,p}$, which can be estimated from below via a cell formula.  \\

An optimal estimate from below for the surface term can be obtained by means of a slicing procedure (Proposition \ref{prop:lb-surf}). As for the $\Gamma$-limsup inequality it can be achieved by a direct construction for a class of competitors with regular jump set, which are dense in energy. Here we use the classical approximation result of \cite{Chamb-Cris2019, Cortesani-Toader-Density}. \\

We underline that even in the case of \eqref{1intro} (i.e., with $W$ not depending on $k$) we have some technical improvement in comparison with the result of \cite{FarSciSol, SS21}. First of all we do not need anymore to assume the kernels $\rho_k$ to be radially symmetric. Secondly our $\Gamma$-convergence argument is carried out with respect to the convergence in measure instead of the $L^1$ convergence. This is (almost) the natural one for sequences with equibounded energy (see Theorem \ref{thm:main-theorem}-$(ii)$). It can be indeed shown that such sequences are compact in the measure convergence up to an exceptional set $U^\infty$, where their modulus diverges. However, this set can be easily made empty by adding a penalisation term in the energy (see the statement of Theorem \ref{thm:GSBD-comp} and Remark \ref{rem:additional-term}).  \\

Eventually we complement our analysis with a \textit{stochastic homogenisation} result Theorem \ref{thm:stoch-hom}.  Namely we consider  functionals of type \eqref{3intro} with \textit{stationary} random integrands
\begin{equation}\label{5intro}
W_k(\omega,y,M)=W\Big(\omega,\frac{y}{\delta_k},M\Big)\,,
\end{equation}
where $\omega$ belongs to the sample space $\O$ of a probability space $(\O,\T,P)$ and $\delta_k\searrow0$.  Following the approach proposed by \cite{DMM} (which relies on the Subadditive Ergodic Theorem in \cite{AK81}) we show that, almost surely, such functionals $\Gamma$-converge to a free-discontinuity functional of the form \eqref{4intro} where the bulk energy density is independent of the space variable.
A similar result was obtained in \cite{BMZ} in the context of elliptic approximation of free-discontinuity functionals.
\\

\noindent
\textbf{Plan of the paper.} The paper is structured as follows. After fixing the notation, in Section \ref{sec:setting}, we introduce the problem, discuss the assumptions and  state our main results. Section \ref{sec:prelim-results} is devoted to recalling preliminary results which are useful for the analysis. The proof of Theorem \ref{thm:main-theorem} is carried out trough the Sections \ref{sec:compactness}--\ref{sec:ub}, dealing with compactness, lower, and upper bound, respectively. In Section \ref{sec:stoch-hom} we prove a stochastic homogenisation result Theorem \ref{thm:stoch-hom}. Eventually in the Appendix we briefly comment on the result of \cite[Theorem 3.2]{Cortesani}, highlighting that the assumptions made there can actually be weakened. A complete statement is given for the readers convenience in Theorem \ref{cortesani-improved}.

\section{Setting of the problem and main results}\label{sec:setting}
\subsection{Notation}\label{subs:notation} We start by collecting the notation adopted throughout the paper. 

\begin{enumerate}[label=(\alph*)]
	\item $n\geq 2$ is a fixed integer and $p>1$ is a fixed real number; 
	\item  $\mathbb{M}^{n\times n}$  denotes the space of $n\times n$ real matrices; $\mathbb{M}^{n\times n}_{\rm sym}$ and $\mathbb{M}^{n\times n}_{\rm skew}$ denote the spaces of symmetric and skew-symmetric matrices respectively; 
	\item for a subset $A\subset\R^n$ $\partial^*A$ denotes the essential boundary of $A$;
	\item $\mathcal L^n$ and and $\mathcal H^{n-1}$ denote the Lebesgue measure and the $(n-1)$-dimensional Hausdorff measure on $\R^n$, respectively;
	\item for every $A\subset\R^n$ let $\chi_A$ denote the characteristic function of the set $A$;
	\item $U$ denotes an open  bounded subset of $\R^n$ with Lipschitz boundary; 
	\item we denote by $\A(U)$  and $\A$ the collection of all open and bounded subsets  of $U$ and $\R^n$ respectively;
	\item
	If $A,B \in \A(U)$ (or $\A$) by $A \subset \subset B$ we mean that $A$ is relatively compact in $B$;
	\item $Q$ and $Q'$
	denote the open unit cube in $\R^n$ and $\R^{n-1}$ respectively with sides parallel to the coordinate axis, centred at the origin; for $x\in \R^n$ (respectively $x\in \R^{n-1}$) and $r>0$ we set $Q_r(x):= rQ+x$ (respectively $Q'_r(x):= rQ'+x$);
	\item\label{Rn} for every $\xi\in \Sph^{n-1}$ let $R_\xi$ denote an orthogonal $(n\x n)$-matrix such that $R_\xi e_n=\xi$; 
	\item for $x\in\R^n$, $r>0$, and $\xi\in\S^{n-1}$, we define $Q^\xi_r(x):=R_\xi Q_r(x)$. 
	\item for a given topological space $X$, $\mathcal{B}(X)$ denotes the Borel $\sigma$- algebra on $X$. If $X=\R^d$, with $d\in \N$, $d\ge1$ we simply write $\mathcal{B}^d$ in place of $\mathcal B(\R^d)$. For $d=1$ we write $\mathcal B$;
	\item we denote by $L^0(U;\R^n)$ the space of measurable functions;
	\item for $a,b\in \R^n $ the symbol $a
	\otimes b$ denotes the tensor product between $a$ and $b$, while $a\odot b:=\frac12 (a\otimes b+b\otimes a)$. 
\end{enumerate}
\medskip

Throughout the paper $C$ denotes a strictly positive constant which may vary from line to line and within the same expression. 

\subsection{(G)SBV and (G)SBD functions}
We will work with the functional spaces $(G)SBV^p(U;\R^n)$ and $(G)SBD^p(U)$ for which we will recall the main properties and refer the reader to \cite{AFP,DM2013} for a complete exposition of the subject. We say that $u\in L^1(U;\R^n)$ belongs to the space of \textit{special functions with bounded variation}, i.e.,  $u\in SBV(U;\R^n)$, if its distributional gradient is a finite $\mathbb M^{n\times n}$-valued Radon measure without Cantor part, that is,
\begin{equation*}
	Du=\nabla u\L^n+[u]\otimes \nu_u\H^{n-1}\res J_u\,,
\end{equation*}
where $\nabla u$ is the approximate gradient, $J_u$ is the approximate jump set, $[u]=u^+-u^-$ the jump opening and $\nu_u$ the unit normal to $J_u$.   A function $u\in L^0(U;\R^n)$ belongs to the space of  \textit{generalised special functions with bounded variation}, i.e., $u\in GSBV(U;\R^n)$,  if for any $\varphi\in C^1(\R^n;\R^n)$ with support of $\nabla\varphi$ compact it holds $\varphi\circ u\in SBV_{\rm loc}(U;\R^n)$. 
\\

We say that $u\in L^1(U;\R^n)$ belongs to the space of \textit{special functions with bounded deformation}, and we write $u\in SBD(U)$, if its symmetrized distributional gradient is a finite $\mathbb M^{n\times n}_{\rm sym}$-valued Radon measure without Cantor part, that is,
\begin{equation*}
	Eu=\frac{Du+(Du)^T}{2}= e(u)\L^n+ [u]\odot \nu_u\H^{n-1}\res J_u\,,
\end{equation*}
with $e(u)$ is the approximate symmetric gradient with respect to the Lebesgue measure.
On the contrary the space of \textit{generalised special functions with bounded deformation}, $GSBD(U)$, cannot be defined analogously to the space $GSBV(U;\R^n)$ as if $u\in SBD(U)$ and $\varphi$ is as above, then  in general $\varphi\circ u\notin SBD(U)$.  To overcome this issue, Dal Maso in \cite{DM2013} proposed a definition of this space by relying on a slicing argument which we describe in the following.\\
For $\xi\in\R^n\setminus\{0\}$ we let $\Pi^\xi\defas\{y\in\R^n\colon\langle\xi,y\rangle=0\}$; for any $y\in \Pi^\xi$ and $A\in\mathcal{B}(U)$ we set
\begin{equation*}
	A_{\xi,y}\defas\{t\in \R\colon y+t\xi\in A\}\,.
\end{equation*}
Given $u\colon U\to\R^n$ we define $u^{\xi,y}\colon	U_{\xi,y}\to\R$ by 
\begin{equation*}
	u^{\xi,y}(t)\defas\langle u(y+t\xi),\xi \rangle\,.
\end{equation*}
If $u^{\xi,y}\in SBV(U_{\xi,y};\R)$  we set
\begin{equation*}
J_{u^{\xi,y}}^1\defas\{t \in J_{u^{\xi,y}}\colon |[J_{u^{\xi,y}}](t)|\ge1\}\,.
\end{equation*}
We then say that $u\in L^0(U;\R^n)$ belongs to the space of \textit{generalised special functions with bounded deformation}, and we write $u\in GSBD(U)$, if there exists a bounded Radon measure $\lambda$ on $U$ such that $u^{\xi,y}\in SBV_{\rm loc}(U_{\xi,y})$ for all $\nu\in \S^{n-1}$ and all $y\in \Pi^\xi$ and 
\begin{equation*}
	\int_{\Pi^\xi}\Big(|Du^{\xi,y}|(A_{\xi,y}\setminus J_{u^{\xi,y}}^1)+\H^0(A_{\xi,y}\cap J_{u^{\xi,y}}^1)	\Big) \dHn(t)
\le \lambda(A)\,,
\end{equation*}
for all $A\in \mathcal{B}(U)$. Eventually we set
\begin{equation*}
	GSBV^p(U)\defas\{u\in (G)SBV(U;\R^n)\colon \nabla u\in L^p(U;\mathbb M^{n\times n})\ \text{and}\ \H^{n-1}(J_u)<+\infty
	\}\,;\end{equation*}
and
\begin{equation*}
GSBD^p(U)\defas\{u\in (G)SBD(U)\colon e(u)\in L^p(U;\mathbb M^{n\times n}_{\rm sym})\ \text{and}\ \H^{n-1}(J_u)<+\infty
\}\,,
\end{equation*}
where $\nabla u$ and $e(u)$ are well defined also in $GSBV(U;\R^n)$ and $GSBD(U)$ respectively.
%
%
\subsection{Setting of the problem}\label{subs:setting}
Let $1<p<+\infty$; let $c_1,c_2$ be given positive constants such that $0<c_1\le c_2<+\infty$. 
Let $\mathcal{W}\defas\mathcal{W}(p,c_1,c_2)$ be the collection of all functions 
 $ W\colon\R^n\times \mathbb{M}^{n\times n}\to\R$ satisfying the following conditions:
\begin{enumerate}[label=($W\arabic*$)]
	\item\label{hyp:conv+lsc-W} $W$ is a Carathéodory function on $\R^n\times \mathbb{M}^{n\times n}$;
	\item\label{hyp:min-W} $W(x,0)=0$ for every $x\in \R^n$;
		\item\label{hyp:skew} for every $x\in\R^n$, $M\in\mathbb M^{n\times n}$ and $S\in \mathbb M^{n\times n}_{\rm skew}$
	\begin{equation*}
		W(x,M+S)=W(x,M)\,;
	\end{equation*}
	\item\label{hyp:growth-W} for every $x\in \R^n$ and every $M\in \mathbb{M}^{n\times n}$
	\begin{equation*}
		c_1|M+M^T|^p\le W(x,M)\le c_2(|M+M^T|^p+1)\,.
	\end{equation*}
\end{enumerate}
Let $f\colon[0,+\infty)\to[0,+\infty)$ be a lower semi-continuous increasing function such that there exist $\alpha,\beta>0$ with
\begin{equation}\label{hyp:f}
\lim_{t\to 0^+}\frac{f(t)}{t}=\alpha\,,\quad\lim_{t\to+\infty}f(t)=\beta\,.
\end{equation}
Note that for such $f$ it holds 
\begin{equation}\label{eq:upbound-f}
f(t)\le \hat\alpha t\quad \forall \hat{\alpha}>\alpha\,;
\end{equation}
moreover by \cite[Lemma 2.10]{SS21} there exist $(\alpha_i)_{i\in\N}$, $(\beta_i)_{i\in\N}$ sequences of positive numbers with $\sup_i\alpha_i=\alpha$, $\sup_i\beta_i=\beta$ such that
\begin{equation}\label{eq:est-f}
	f(t)\ge f_i(t)\defas\alpha_it\wedge\beta_i\quad\forall i\in\N\,,\ t\in\R\,.
\end{equation}
Let  $\rho\in L^\infty(\R^n;[0,+\infty))$ be a lower semi-continuous convolution kernel with $\int_{\R^n}\rho\dx=1$ and $S\defas{\rm supp} (\rho)$ bounded, convex, symmetrical and that $0\in S$. 
 We denote by $|\cdot|_S$ the norm induced by $S$, namely,
 $$|x|_S\defas\inf\{\lambda>0\colon x\in\lambda S\}\,,$$
 so that  $S=\{|x|_S<1\}$.  Then for any bounded set $K\subset\R^n$ and $x\in\R^n$ we let
 $$\d_S(x,K)\defas\inf_{y\in K}|x-y|_S\,.$$ 
 For any Borel set $E$ and any $r>0$ we denote by $E_r$ and $E_{-r}$ respectively the sets 
 \begin{equation*}
 	E_r\defas\{x\in\R^n\colon\d_S(x,E)<r\}\,,\quad 	E_{-r}\defas\{x\in\R^n\colon\d_S(x,E^c)>r\}\,.
 \end{equation*}
Finally we let $\phi_\rho\colon\R^n\to[0,+\infty)$ be given by
\begin{equation}\label{def:phi}
\phi_\rho(\nu)\defas2\sup_{y\in S}|y\cdot \nu|\,.
\end{equation}
For $\delta>0$ we set $\rho_\delta(x)\defas\frac1{\delta^n}\rho(\frac x{\delta})$, $S_\delta(x)\defas x+\delta S$.\\

For $k\in\N$ let $(W_k)\subset\mathcal W$, let $(\e_k)$ be a decreasing sequence of strictly positive real numbers converging to zero, as $k\to+\infty$ and let $\rho_k\defas\rho_{\e_k}$. 
We consider the family of functionals $F_k\colon  L^0(U;\R^n)\to [0,+\infty]$ defined as 
\begin{equation}\label{F}
	F_k(u)\defas\begin{cases}
		\displaystyle\frac1{\e_k}\int_U f\Big(\e_kW_k(\cdot,e(u))*\rho_k(x)
	\Big)\dx& \text{ if } u\in W^{1,p}(U;\R^n)\,,\\
		+\infty&\text{ otherwise }.
	\end{cases}
\end{equation}
%
%
Let $x\in\R^n$, $M\in \mathbb M^{n\times n}$, $A\in\A$ and $u\in W^{1,p}(A;\R^n)$ be fixed. Set  $u_M(y)\defas My$. We then define the  minimisation problem
\begin{equation}\label{min-prob}
\textbf{m}_k(u_M,A)\defas\inf\left\{\int_AW_k(x,e(v))\dx
\colon v\in W^{1,p}(A;\R^n),\ v=u_M\ \text{near}\ \partial A  \right\}\,,
\end{equation}
  and the cell formulas
\begin{equation}\label{eq:cell-formula-inf}
	\begin{split}
		W'(x,M)\defas \limsup_{r\searrow0^+}\liminf_{k\to+\infty}\frac{\textbf m_k(u_M,Q_r(x))}{r^n}\,,
	\end{split}
\end{equation}
\begin{equation}\label{eq:cell-formula-sup}
	\begin{split}
	W''(x,M)\defas \limsup_{r\searrow0^+}\limsup_{k\to+\infty}\frac{\textbf m_k(u_M,Q_r(x))}{r^n}\,.
	\end{split}
\end{equation}
\subsection{Main Results}\label{sec:main-results}
In this Section we state our main results. The first one is a $\Gamma$-convergence theorem for the energies $F_k$. 
\begin{theorem}[$\Gamma$-convergence of $F_k$]\label{thm:main-theorem}
Let $F_k$ be as in \eqref{F}. Then the following hold:
\begin{enumerate}[label*=$(\roman*)$]
\item  There exists a subsequence, not relabelled, such that $F_k$ $\Gamma$-converges with respect to the convergence in measure to the  functional $F\colon  L^0(U;\R^n) \to [0,+\infty]$ given by
\begin{equation}\label{def:limit-F}
	F(u)\defas\begin{cases}
		\displaystyle\	\alpha\int_UW(x,e(u))\dx+\beta \int_{J_u}\phi_\rho(\nu_u)\dHn&\text{if }u\in GSBD^p(U)\,,\\
		+\infty&\text{otherwise}\,,
	\end{cases}
\end{equation}
with $\phi_\rho$ as in \eqref{def:phi} and for every $x\in U$ and every $M\in\mathbb M^{n\times n}$
\begin{equation*}
	\begin{split}
		W(x,M)=W(x,{\rm sym}(M) )=W'(x,M)=W''(x,M)\,,
	\end{split}
\end{equation*}
with $W''$, $W''$ as in \eqref{eq:cell-formula-inf} and \eqref{eq:cell-formula-sup};
\item Let $(u_k)\subset L^0(U;\R^n)$ be such that $\sup_kF_k(u_k)<+\infty$. Then there exists $u\in GSBD^p(U)$  such that, up to subsequence, it holds
\begin{equation*}
	u_k\to u\quad \text{in measure on }U\setminus U^\infty\,,
\end{equation*}
\begin{equation*}
	e(u_k)\rightharpoonup e(u)\quad \text{in } L^p(U\setminus U^\infty;\mathbb{M}^{n\times n}_{\rm sym})\,,
\end{equation*}
\begin{equation*}
\liminf_{k\to+\infty}\H^{n-1}(J_{u_k})\ge \H^{n-1}(J_u\cup(\partial^*U^\infty\cap U))\,,
\end{equation*}
where $U^\infty\defas\{x\in U\colon |u_k(x)|\to+\infty\}$.
If in addition 
\begin{equation*}
	\sup_{k\in\N}\int_U\psi(|u_k|)\dx<+\infty\,,
\end{equation*}
for some $\psi\colon[0,+\infty)\to [0,+\infty)$,  continuous, increasing with $\lim_{s\to+\infty}\psi(s)=+\infty$, then $U^\infty=\emptyset$, so that $|u|$ is finite a.e., and $u_k\to u$ in measure on $U$.  
\end{enumerate}

\end{theorem}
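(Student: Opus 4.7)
My plan is to prove Theorem \ref{thm:main-theorem} by the standard localisation method for $\Gamma$-convergence. For $A\in\A(U)$, I would introduce the localised functionals $F_k(u,A)$ by restricting the integral in \eqref{F} to $A$, and use the compactness of $\Gamma$-convergence (in the De Giorgi sense) together with a diagonal argument to extract a subsequence along which $F_k(\cdot,A)$ $\Gamma$-converges with respect to the measure convergence to some $F(\cdot,A)$ for all $A$ in a countable rich subfamily of $\A(U)$. Once part $(ii)$ is available, this $\Gamma$-limit will be finite precisely on $GSBD^p(U)$, and the integral representation theorem in $GSBD^p$ (together with the inner regularity of $F(u,\cdot)$) will yield a decomposition into a bulk part with some density $g(x,e(u))$ and a surface part with some density $\psi(x,[u],\nu_u)$. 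The heart of the proof is then to identify $g=\alpha W$ with $W=W'=W''$, and $\psi(x,[u],\nu_u)=\beta\phi_\rho(\nu_u)$.

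For part $(ii)$, I would exploit the bound \eqref{eq:est-f} together with the coercivity \ref{hyp:growth-W}: since $F_k(u_k)\ge \tfrac{1}{\e_k}\int_U f_i(\e_k c_1|e(u_k)+e(u_k)^T|^p*\rho_k)\dx$, one recovers a uniform bound on a truncated non-local Ambrosio--Braides--Dal Maso-type energy. This falls into the regime of the $GSBD^p$ compactness theorem (the result referenced as Theorem \ref{thm:GSBD-comp} in the paper), giving the claimed convergence in measure outside the set $U^\infty$, weak convergence of the symmetrised gradients and lower semicontinuity of $\H^{n-1}(J_{u_k})$. The additional equi-integrability assumption $\sup_k\int_U\psi(|u_k|)\dx<+\infty$ rules out $U^\infty$ by a standard Chebyshev-type argument.

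The hard part is the bulk lower bound. Following the strategy announced after \eqref{4intro}, I would perform a blow-up at a Lebesgue point $x_0$ of $e(u)$ and of the Radon--Nikodym derivative of $F(u,\cdot)$ with respect to $\L^n$, along a sequence of cubes $Q_{r_j}(x_0)$. Exploiting \ref{hyp:growth-W} and the upper bound \eqref{eq:upbound-f}, I would first show that the symmetrised gradients of the optimising sequence are uniformly $L^p$-controlled on $Q_{r_j}(x_0)$ \emph{up to a set of arbitrarily small perimeter}. This is the step where the Korn-type inequality of \cite{CCS}, invoked through \cite[Lemma 5.1]{FPS}, enters: it allows me to replace my competitors by Sobolev functions with essentially the same energy and trace $u_M$ (with $M=e(u)(x_0)$) near $\partial Q_{r_j}(x_0)$. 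Combining this substitution with $f(t)\ge\alpha_i t$ for $t$ small (from \eqref{hyp:f}) and letting first $k\to\infty$, then $i$ and $r_j$, identifies the bulk density as $\alpha W'(x_0,e(u)(x_0))$; the matching upper bound at Lebesgue points will give $\alpha W''$, forcing $W'=W''$ a.e. and proving equality with the local cell formula.

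For the surface lower bound I would slice in the spirit of \cite{BraDal97}, reducing to the one-dimensional statement of \cite{SS21} on the half-line after projecting onto directions in $S$: the support function structure of $\phi_\rho$ emerges by integrating the one-dimensional contribution over the fibers parallel to $\xi\in S$ and optimising in $\xi$. For the $\Gamma$-limsup I would use the density in energy of $GSBD^p$ functions with closed polyhedral jump sets (\cite{Chamb-Cris2019,Cortesani-Toader-Density}); on such competitors, far from $J_u$ the density $W_k$ can be directly matched by the cell formula \eqref{eq:cell-formula-sup}, while near $J_u$ a one-dimensional transition in the direction $\nu_u$ — truncated so that $\e_k W_k\to+\infty$ on thin layers — produces a surface contribution of exactly $\beta\phi_\rho(\nu_u)$, using $f(t)\to\beta$ at infinity. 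The main obstacle throughout is the bulk blow-up: replacing non-local Sobolev minimisers by genuinely Sobolev ones on small cubes while controlling the boundary datum, which is where the Korn estimate of \cite{CCS,FPS} plays a decisive role.
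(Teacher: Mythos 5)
Your broad strategy --- blow-up at Lebesgue points for the bulk lower bound relying on the Korn-type inequality via \cite[Lemma 5.1]{FPS}, slicing for the surface lower bound, and density of regular competitors for the upper bound --- matches the paper's. The top-level architecture, however, is different and worth flagging. You propose to first obtain an abstract $\Gamma$-limit $F(\cdot,A)$ via De~Giorgi $\Gamma$-compactness and localisation, then invoke an integral representation theorem in $GSBD^p$ to decompose it into $\int_A g(x,e(u))\dx+\int_{J_u\cap A}\psi(x,[u],\nu_u)\dHn$, and finally identify $g$ and $\psi$. The paper instead proves the liminf and limsup inequalities directly for the concrete candidate $F$ in \eqref{def:limit-F}: the bulk lower bound (Proposition \ref{blow-up}) and the surface lower bound (Proposition \ref{prop:lb-surf}) are established as inequalities for measures, then combined through \cite[Prop.~1.16]{Braides}, with no integral representation theorem for $GSBD^p$ invoked at all. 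The only subsequence extracted is the one that makes the \emph{Sobolev-level} $\Gamma$-limit (Theorem \ref{thm:conv-in-sobolev}) exist; the $GSBD^p$ $\Gamma$-limit does not need a separate extraction. Your route is admissible, but it adds a layer of machinery: you would have to verify the hypotheses of the $GSBD^p$ representation theorem (locality, measure property, growth) for the abstract $\Gamma$-limit, and then separately prove that the resulting surface density does not actually depend on $[u]$ or $x$. The paper's direct approach sidesteps both of these tasks.

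There is one genuine gap in your plan for part $(ii)$. You write that the lower bound $F_k(u_k)\ge \tfrac1{\e_k}\int_U f_i(\e_k c_1|e(u_k)+e(u_k)^T|^p*\rho_k)\dx$ ``falls into the regime of the $GSBD^p$ compactness theorem.'' It does not, at least not directly: the $u_k$ are $W^{1,p}$ functions with no a priori $L^p$ bound on $e(u_k)$ (only the \emph{truncated} nonlocal quantity is bounded, and $f_i$ saturates at $\beta_i$), so Theorem \ref{thm:GSBD-comp} cannot be applied to $(u_k)$ itself. The essential intermediate step, carried out in Proposition \ref{prop:compactness}, is to \emph{construct} modified competitors $\bar u_k\in GSBV^p(U;\R^n)$ by setting $u_k$ to zero on a well-chosen sublevel set $A_k^3$ of the averaged energy, with $\L^n(A_k^3)\to 0$, and to show via the coarea/mean-value argument and the covering estimate \eqref{eq:comp5} that both $\int_{U'}|e(\bar u_k)|^p\dx$ and $\H^{n-1}(\partial A_k^3)=\H^{n-1}(J_{\bar u_k})$ are controlled by $F_k(u_k)$. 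Only then does the $GSBD^p$ compactness theorem deliver the limit $u$ and the exceptional set $U^\infty$. Your proposal reuses essentially this same truncation in the blow-up for the bulk lower bound (Step 2 of Proposition \ref{blow-up}), so it should not be elided in the compactness step either.
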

\begin{remark}\label{rem:additional-term}
The addition of a penalty term of the form 
$\int_U\psi(|u|)\dx$ to the energy enforces then compactness in measure, while 
causing no troubles in the $\Gamma$-convergence analysis. Indeed, such a term is clearly lower semicontinuous, hence the corresponding lower bound follows immediately. As for the upper bound, if one takes $\psi$ as in Theorem \ref{thm:GSBD-density}, the argument of Proposition \ref{prop:upper-bound} can be readily adapted also in presence of such an additional term. As this is not the core of the argument, we will neglect lower order terms in our statements and proofs, directly assuming that convergence in measure holds everywhere. The technical details left to prove the upper bound are summarized in Remark \ref{rem:ub} for the readers convenience.
\end{remark}
The proof of Theorem \ref{thm:main-theorem} is divided into three main steps contained respectively in sections \ref{sec:compactness}, \ref{sec:lb} and \ref{sec:ub}.
As a consequence of Theorem \ref{thm:main-theorem} and the Urysohn property of $\Gamma$-convergence \cite[Proposition 8.3]{DM} we deduce the following corollary.
\begin{corollary}\label{cor:urysohn}
	Let $(W_k)\subset\mathcal{W}$ and let $F_k$ be the functionals as in \eqref{F}. Let $W'$, $W''$ be as in \eqref{eq:cell-formula-inf} and \eqref{eq:cell-formula-sup}, respectively. Assume that 
	\begin{equation*}
	W'(x,M)=W''(x,M)=:W(x,M)\,, \quad \text{for a.e. } x\in\R^n\text{ and for every }M\in\mathbb M^{n\times n}\,,
	\end{equation*}
for some Borel function $W\colon\R^n\times\mathbb M^{n\times n}\to[0,+\infty)$. Let  $F$ defined as in \eqref{def:limit-F} accordingly.
Then the functionals $F_k$ $\Gamma$-converge with respect to the convergence in measure to $F$.
Moreover
\begin{equation*}
	\begin{split}
		W(x,M)=W(x,{\rm sym}(M) )=W'(x,M)=W''(x,M)\,,
	\end{split}
\end{equation*}
for every $x\in U$ and every $M\in\mathbb M^{n\times n}$.

\end{corollary}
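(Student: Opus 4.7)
The strategy is to combine Theorem \ref{thm:main-theorem} with the Urysohn property of $\Gamma$-convergence (\cite[Proposition 8.3]{DM}): in order to show that the whole sequence $(F_k)$ $\Gamma$-converges to $F$ it suffices to verify that every subsequence of $(F_k)$ admits a further subsequence $\Gamma$-converging to the same $F$ stated in the corollary.

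Pick then an arbitrary subsequence $(F_{k_j})$. By Theorem \ref{thm:main-theorem}$(i)$ applied to $(F_{k_j})$ we extract a further subsequence, not relabelled, which $\Gamma$-converges in measure to a functional $\tilde F$ of the form \eqref{def:limit-F}. Its surface contribution already coincides with that of $F$ since it depends only on $\rho$ and $\beta$, while its bulk density $\tilde W$ satisfies
\begin{equation*}
\tilde W(x,M) = \tilde W(x,\mathrm{sym}(M)) = \tilde W'(x,M) = \tilde W''(x,M)\qquad \forall\, x\in U,\ \forall\, M\in\mathbb M^{n\times n},
\end{equation*}
where $\tilde W'$, $\tilde W''$ are the analogues of \eqref{eq:cell-formula-inf}-\eqref{eq:cell-formula-sup} computed along the chosen sub-subsequence. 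The key observation is that the iterated $\liminf$ along any subsequence dominates the iterated $\liminf$ along the full sequence, and symmetrically for $\limsup$; monotonicity of $\limsup_{r\searrow 0}$ then yields the pointwise sandwich
\begin{equation*}
W'(x,M) \le \tilde W'(x,M) \le \tilde W''(x,M) \le W''(x,M) \qquad \forall\, x\in \R^n,\ \forall\, M\in \mathbb M^{n\times n}.
\end{equation*}
Combined with the standing assumption $W'=W''$ a.e., this forces $\tilde W = W$ a.e.\ on $U$, so that the bulk integrals of $\tilde F$ and of $F$ coincide, and $\tilde F = F$.

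The Urysohn property now delivers the $\Gamma$-convergence of the whole sequence $(F_k)$ to $F$. The pointwise equalities $W(x,M) = W(x,\mathrm{sym}(M)) = W'(x,M) = W''(x,M)$ on all of $U\times \mathbb M^{n\times n}$ follow by invoking Theorem \ref{thm:main-theorem}$(i)$ once more, now with the full sequence itself in the role of its subsequence: the extracted subsequential cell formulas are literally $W'$ and $W''$ of \eqref{eq:cell-formula-inf}-\eqref{eq:cell-formula-sup}, and the bulk density of the $\Gamma$-limit $F$ is then identified with this common value (up to a redefinition of $W$ on a Lebesgue-null set, which leaves the functional unchanged). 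Conceptually the proof is straightforward; the only genuine point is the elementary monotonicity of iterated $\liminf/\limsup$ under passage to subsequences that underpins the sandwich above, the rest being a mechanical combination of Theorem \ref{thm:main-theorem} with the Urysohn principle.
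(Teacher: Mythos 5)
Your proposal is correct and takes exactly the approach the paper has in mind: the paper gives no explicit proof and merely remarks that the corollary ``follows from Theorem \ref{thm:main-theorem} and the Urysohn property,'' and your argument fills in that gap. The key quantitative observation — the sandwich
\begin{equation*}
W'(x,M)\ \le\ \tilde W'(x,M)\ \le\ \tilde W''(x,M)\ \le\ W''(x,M),
\end{equation*}
which follows from the fact that passing to a subsequence raises the iterated $\liminf$ and lowers the iterated $\limsup$, together with the monotonicity of $\limsup_{r\searrow 0}$ — is precisely what makes the Urysohn argument close: it forces the bulk density extracted along any sub-subsequence to coincide a.e.\ with the preassigned $W$ under the hypothesis $W'=W''$ a.e., hence all subsequential $\Gamma$-limits equal $F$. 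The only point you treat a bit loosely is the final ``for every $x$'' identification of $W$ with $W'$ and $W''$: this cannot be deduced just by a further extraction, since one would again obtain the equalities only along a subsequence. It should rather be read (as you yourself anticipate) as a redefinition of $W$ on a null set, or justified by observing that once $E_k$ $\Gamma$-converges for the whole sequence one has genuine limits (not just $\liminf/\limsup$) of the localized minimum values $\mathbf m_k(u_M,Q_r(x))$, from which $W'=W''$ everywhere as in Theorem \ref{thm:conv-in-sobolev}. This is a minor gloss, not a gap in the argument.
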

We now state a homogenisation theorem without assuming any spatial periodicity of the energy densities $W_k$. We start by introducing some notation. We fix $W\in\mathcal W$ and set
\begin{equation}\label{min-prob-hom}
	\m(u_M,A)\defas\inf\left\{\int_AW(x,e(v))\dx\colon v\in W^{1,p}(A;\R^n),\ v=u\ \text{near}\ \partial A
	\right\}\,,
\end{equation}
for all $A\in\mathcal{A}$ and all $M\in\mathbb M^{n\times n}$
Let also $(W_k)\subset\mathcal W$ be given by
\begin{equation}\label{def:W-eps-hom}
	W_k(x,M)\defas W\Big(\frac{x}{\delta_k},M\Big)\,,
\end{equation}
with $\delta_k\searrow0$ when $k\to+\infty$. 
\begin{theorem}[Deterministic homogenisation]\label{thm:det-hom} Let 
Let $W\in\mathcal W$ and let $\m(u_M,Q_t(tx))$ be as in \eqref{min-prob-hom} with $A=Q_t(tx)$. Assume that for every $x\in\R^n$, $M\in\mathbb M^{n\times n}$ the following limit
\begin{equation}\label{eq:lim-cell-hom}
\lim_{t\to+\infty}\frac{\m(u_M,Q_t(tx))}{t^n}=:W_{\rm hom}(M)\,,
\end{equation}
exists and is independent of $x$. Then the functionals $F_k$ defined in \eqref{F} with $W_k$ as in \eqref{def:W-eps-hom} $\Gamma$-converge with respect to the convergence in measure to the functional  $F_{\rm hom}\colon L^0(U;\R^n)\to[0,+\infty]$ given by 
\begin{equation}\label{def:limit-F-hom}
	F_{\rm hom}(u)\defas\begin{cases}
		\displaystyle\	\alpha\int_U W_{\rm hom}(e(u))\dx+\beta \int_{J_u}\phi_\rho(\nu_u)\dHn&\text{if }u\in GSBD^p(U)\,,\\
		+\infty&\text{otherwise}\,,
	\end{cases}
\end{equation}
with $\phi_\rho$ as in \eqref{def:phi}. Moreover $W_{\rm hom}(M)=W_{\rm hom}({\rm sym}(M))$ for all $M\in\mathbb M^{n\times n}$.
\end{theorem}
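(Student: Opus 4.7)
The plan is to reduce the theorem to an application of Corollary \ref{cor:urysohn}. To that end, it suffices to show that for the sequence $W_k(x,M) = W(x/\delta_k, M)$ the two cell formulas in \eqref{eq:cell-formula-inf}-\eqref{eq:cell-formula-sup} coincide and agree, for every $x \in \R^n$ and every $M \in \mathbb{M}^{n\times n}$, with the homogenised density $W_{\rm hom}(M)$ given by \eqref{eq:lim-cell-hom}. Once this identification is made, Corollary \ref{cor:urysohn} yields the $\Gamma$-convergence of $F_k$ to $F_{\rm hom}$ with respect to convergence in measure, and also gives the identity $W_{\rm hom}(M)=W_{\rm hom}(\mathrm{sym}(M))$ at no extra cost (which could alternatively be derived directly from \ref{hyp:skew} applied to $W$).

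The key computation is a scaling identity for $\m_k(u_M, Q_r(x))$. Given any admissible $v \in W^{1,p}(Q_r(x);\R^n)$ with $v = u_M$ near $\partial Q_r(x)$, the change of variable $y = x'/\delta_k$ and the rescaling $w(y) := v(\delta_k y)/\delta_k$ produce a function in $W^{1,p}(Q_{r/\delta_k}(x/\delta_k);\R^n)$ which still equals $u_M$ near the boundary (since $u_M$ is homogeneous of degree one). Since $e(v)(x') = e(w)(x'/\delta_k)$, one obtains
\begin{equation*}
\m_k(u_M, Q_r(x)) \;=\; \delta_k^n \, \m\bigl(u_M, Q_{r/\delta_k}(x/\delta_k)\bigr),
\end{equation*}
and this is actually an equality of infima because the scaling map is a bijection between the two admissible classes.

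Setting $t_k := r/\delta_k$, which tends to $+\infty$ as $k \to \infty$ for any fixed $r>0$, and writing $x/\delta_k = t_k \cdot (x/r)$, the identity above rearranges to
\begin{equation*}
\frac{\m_k(u_M, Q_r(x))}{r^n} \;=\; \frac{\m\bigl(u_M, Q_{t_k}(t_k \cdot x/r)\bigr)}{t_k^n}.
\end{equation*}
The hypothesis \eqref{eq:lim-cell-hom} then guarantees that the right-hand side converges to $W_{\rm hom}(M)$, independently of the base point $x/r$, and in particular independently of $r$. Passing to the $\liminf$ and $\limsup$ in $k$, and then letting $r \searrow 0$, yields $W'(x,M) = W''(x,M) = W_{\rm hom}(M)$ for every $x \in \R^n$ and $M \in \mathbb{M}^{n\times n}$.

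The main conceptual step is this scaling lemma, but since $u_M(y) = My$ is linear and the admissible classes are genuinely isomorphic under the rescaling, the argument is clean; there is no real obstacle beyond bookkeeping. With the cell formulas identified, Corollary \ref{cor:urysohn} applies and delivers the stated $\Gamma$-convergence together with the symmetrisation property of $W_{\rm hom}$, thus concluding the proof.
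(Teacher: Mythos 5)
Your proof is correct and follows essentially the same route as the paper: reduce to Corollary \ref{cor:urysohn} via the rescaling $w(y)=v(\delta_k y)/\delta_k$, which yields the identity $\m_k(u_M,Q_r(x))=\delta_k^n\,\m(u_M,Q_{r/\delta_k}(x/\delta_k))$, then set $t_k=r/\delta_k$ and invoke \eqref{eq:lim-cell-hom}. The paper's argument is the same scaling computation, so nothing further to add.
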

\begin{proof}
	Let $W'$, $W''$ be respectively as in \eqref{eq:cell-formula-inf} and \eqref{eq:cell-formula-sup}. By Corollary \ref{cor:urysohn} it is sufficient to show that
	\begin{equation}\label{eq:lim-hom}
W_{\rm hom}(M)=W'(x,M)=W''(x,M)\,,
	\end{equation}
for all $x\in \R^n$ and $M\in\mathbb M^{n\times n}$. We fix $x\in\R^n$, $M\in \mathbb M^{n\times n}$, $r>0$ and $k\in\N$. For any $u\in W^{1,p}(Q_r(x);\R^n)$ with $u=u_M$ near $\partial Q_r(x)$
we let $u_k\in W^{1,p}(Q_{\frac r{\delta_k}}(\frac x{\delta_k});\R^n)$ be given by $u_k(y)\defas \frac1{\delta_k}u(\delta_ky)$. Then clearly $u_k=u_M$ near $\partial Q_{\frac r{\delta_k}}(\frac x{\delta_k})$. Moreover by performing the change of variable $\hat y=\frac y{\delta_k}$ we find
\begin{equation*}
	\int_{Q_r(x)}W\Big(\frac y{\delta_k},e(u)\Big)\dy=\delta_k^n \int_{Q_{\frac r{\delta_k}}(\frac x{\delta_k})} W(y,e(u_k))\dy\,.
\end{equation*}
Hence in particular 
\begin{equation*}
\m_k(u_M,Q_r(x))=\delta_k^n\m(u_M,Q_{\frac r{\delta_k}}(\tfrac{x}{\delta_k}))=\frac{r^n}{t_k^n}
\m(u_M,Q_{t_k}(t_k\tfrac{x}{r}))\,,
\end{equation*}
with $t_k\defas \frac r{\delta_k}$.
Eventually passing to the limit as $k\to+\infty$ by \eqref{eq:lim-cell-hom} we deduce
\begin{equation*}
\lim_{k\to+\infty} \frac{\m_k(u_M,Q_r(x))}{r^n}=
\lim_{k\to+\infty} \frac{\m(u_M,Q_{t_k}(t_k\tfrac{x}{r}))}{t_k^n}=W_{\rm hom}(M)\,.
\end{equation*}
\end{proof}
\section{Some preliminary results} \label{sec:prelim-results}
In this section we collect some useful results that will be employed throughout the paper.
We start by recalling a $\Gamma$-convergence result for the bulk energies defined in \eqref{E} (Theorem \ref{thm:conv-in-sobolev}) and a $\Gamma$-convergence result for one-dimensional non-local energies (Theorem \ref{thm:1dim}). To follow we recall a density and a compactness result (cf., Theorem \ref{thm:GSBD-density} and Theorem \ref{thm:GSBD-comp}). We conclude this section with a series of technical lemmas (cf. Lemmas \ref{technical-lemma1}, \ref{technical-lemma2}, \ref{lem:convL1} and Corollary \ref{cor:convL0}).\\

We consider the family of functionals $E_k\colon  L^0(\R^n;\R^n)\times\mathcal A\to [0,+\infty]$ given by
\begin{equation}\label{E}
	E_k(u,A)\defas\begin{cases}
		\displaystyle\int_A W_k(x,e(u))
		\dx& \text{ if } u\in W^{1,p}(A;\R^n)\,,\\
		+\infty&\text{ otherwise }.
	\end{cases}
\end{equation}
\begin{theorem}[$\Gamma$-convergence of $E_k$]\label{thm:conv-in-sobolev}
	Let $E_k$ be as in \eqref{E}.
Then there exists a subsequence, not relabelled, such that for every $A\in\mathcal{A}$ the functionals $E_k(\cdot,A)$ $\Gamma$-converge, with respect to the convergence in measure, to the  functional $E(\cdot ,A)$ with
 $E\colon  L^0(\R^n;\R^n)\times \mathcal{A}\to [0,+\infty]$ given by
	\begin{equation}\label{def:E(u)}
		E(u,A)=\begin{cases}
			\displaystyle\int_A W(x,e(u))
			\dx& \text{ if } u\in W^{1,p}(A;\R^n)\,,\\
			+\infty&\text{ otherwise }\,,
		\end{cases}
	\end{equation} 
	where for every $x\in\R^n$ and every $M\in\mathbb M^{n\times n}$
	\begin{equation}\label{def:W}
		\begin{split}
			W(x,M)=W(x,{\rm sym}(M) )=W'(x,M)=W''(x,M)\,,
		\end{split}
	\end{equation}
with $W'$, $W''$ as in \eqref{eq:cell-formula-inf} and \eqref{eq:cell-formula-sup} respectively. The same $\Gamma$-convergence holds with respect to the $L^p_{\rm loc}(\R^n;\R^n)$ convergence.
\end{theorem}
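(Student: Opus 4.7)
The plan is to proceed by the classical compactness plus integral representation scheme, adapted to the symmetric--gradient setting. First I would fix a countable dense family $\mathcal D \subset \A$ of open sets with Lipschitz boundary and apply the standard compactness theorem for $\Gamma$-convergence (Dal Maso, \emph{An introduction to $\Gamma$-convergence}, Theorem 16.9) in the $L^p_{\rm loc}(\R^n;\R^n)$ topology to extract a subsequence (not relabelled) such that $E_k(\cdot,A)$ $\Gamma$-converges to some functional $\widetilde E(\cdot,A)$ for every $A\in\mathcal D$. Using the coercivity estimate in \ref{hyp:growth-W} together with Korn's inequality on each fixed Lipschitz subdomain, sequences with equibounded $E_k(u_k,A)$ are bounded in $W^{1,p}(A;\R^n)$ up to infinitesimal rigid motions, so on such bounded sequences $L^p_{\rm loc}$-convergence and convergence in measure agree up to correction by an affine infinitesimal rigid motion, which does not affect $e(u)$; this gives the last sentence of the statement and transfers the $\Gamma$-convergence to the convergence in measure.

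Next I would extend $\widetilde E$ to all of $\A$ via the inner--regular envelope, and verify the four standard hypotheses of the Buttazzo--Dal Maso integral representation theorem: (a) $\widetilde E(\cdot,A)$ is $L^p_{\rm loc}$-lower semicontinuous; (b) it is local, as inherited from $E_k$; (c) it has the measure property, which follows from De Giorgi--Letta's criterion once the fundamental estimate is established, using cut-off functions of bounded gradient and the $p$-growth bound in \ref{hyp:growth-W}; (d) it satisfies the $p$-growth bounds on $e(u)$, inherited from \ref{hyp:growth-W}. The outcome is a Carathéodory integrand $W(x,M)$ with $c_1|M+M^T|^p \le W(x,M) \le c_2(|M+M^T|^p+1)$ such that $\widetilde E(u,A) = \int_A W(x,e(u))\,dx$ on $W^{1,p}(A;\R^n)$ and $+\infty$ otherwise. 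The skew-invariance $W(x,M) = W(x,\mathrm{sym}(M))$ is immediate from \ref{hyp:skew}, since it is preserved by passing to the $\Gamma$-limit of constant-gradient competitors.

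To identify $W$ with $W'$ and $W''$, I would use the standard fact that the density at a Lebesgue point can be recovered through a blow-up of minimum problems with affine boundary data. Precisely, one shows
\begin{equation*}
W(x,M) \;=\; \lim_{r\searrow 0} \frac{1}{r^n}\,\m_\infty(u_M,Q_r(x))\,,
\end{equation*}
where $\m_\infty$ denotes the minimum of $\widetilde E(\cdot,Q_r(x))$ over $v=u_M$ near $\partial Q_r(x)$. The equicoercivity of the sequence $E_k$ on Dirichlet classes (again by Korn plus \ref{hyp:growth-W}) together with the $\Gamma$-convergence implies convergence of minima and minimizers, yielding
\begin{equation*}
\m_\infty(u_M,Q_r(x)) \;=\; \lim_{k\to +\infty} \m_k(u_M,Q_r(x))\,,
\end{equation*}
so that both the $\liminf$ and the $\limsup$ in $k$ coincide with $\m_\infty$. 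Taking $\limsup_{r\to 0}$ then gives $W(x,M) = W'(x,M) = W''(x,M)$.

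The main obstacle I expect is the fundamental estimate in step (c): the bound in \ref{hyp:growth-W} controls only the symmetrized gradient, so when gluing competitors via a cut-off $\varphi$ the error term $\nabla\varphi\odot(u-v)$ produces only symmetric contributions and is therefore actually well-suited to the setting, but one must take some care to absorb the $L^p$-norms of $u-v$ on the overlap via a classical Poincaré--Korn argument. Once this is handled, the remaining steps are standard and the identification via the cell formula follows the by-now classical pattern for $\Gamma$-limits of convex integral functionals in the vectorial setting.
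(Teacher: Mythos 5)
Your proposal follows essentially the same route that the paper indicates: localisation/compactness in the sense of Dal Maso, integral representation adapted via Korn--Poincar\'e, and identification of the density by a blow-up/cell formula. The paper does not give a self-contained proof; it cites the localisation method in Dal Maso's book, the integral representation result of Bouchitt\`e--Fonseca--Leoni--Mascarenhas, and Proposition~3.13 of Friedrich--Perugini--Solombrino, precisely the toolbox you invoke. The only notable difference is cosmetic: you use the Buttazzo--Dal Maso integral representation theorem and then identify the integrand through a separate cell-formula argument, whereas the paper's cited reference (BFLM) is the ``global method'', which produces the integral representation with its density already expressed through blow-up cell minima, so the identification step is built in. Both routes rely on the same two technical ingredients, namely the fundamental estimate with cut-offs (needed for the measure property and for the convergence of minima with Dirichlet data) and the Korn--Poincar\'e inequality to control $u$ from $e(u)$; you correctly flag both. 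One small point worth making explicit is that the $\Gamma$-convergence of the unconstrained functionals does not by itself yield $\m_\infty(u_M,Q_r(x))=\lim_k \m_k(u_M,Q_r(x))$: passing to the boundary-value problems requires precisely the fundamental estimate you establish in step (c), so the logical dependence should be stated in that order, but your sketch already contains the needed ingredient.
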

The proof of Theorem \eqref{thm:conv-in-sobolev} is rather standard and follows by  the localisation method (see e.g., \cite[Sections 18,19]{DM}) and by suitably adapting the integral representation result in \cite[Theorem 2]{BFLM} to our setting with the help of Korn-Poincaré inequality. For this reason we omit the proof here and we refer the reader to \cite[Proposition 3.13]{FPS} for more details. We only highlight that the result holds also for non regular open bounded subsets of $\R^n$. Since this may not be immediately clear from the statement given in \cite[Proposition 3.13]{FPS}, we discuss this point in the remark below.
\begin{remark}\label{rem:conv-in-sobolev}
Let $A$ be \textit{any} open bounded subset of $\R^n$ and $u\in W^{1,p}(A ;\R^n)$. We show that there exists a sequence $(u_k)\subset W^{1,p}(A;\R^n)$ such that $u_k\to u$ in $L^p(A;\R^n)$ and $E_k(u_k,A)\to E(u,A)$. With the use of Korn-Poincaré inequality, it is clear that this can be done if $A$ is an extension domain. In the general case, consider smooth relatively compact subsets $A'\subset\subset A''\subset \subset A$, and fix $\eta >0$. We find a sequence $(v_k)\subset W^{1,p}(A'';\R^n)$ such that $E_k(v_k,A'')\to E(u,A'')$. By the liminf inequality, this also gives $E_k(v_k,A''\setminus A')\to E(u,A''\setminus A')$. With \ref{hyp:growth-W} we have
\[
\limsup_{k\to +\infty}\int_{A''\setminus A'}|e(v_k)|^p\dx\le \frac{c_2}{c_1}\int_{A''\setminus A'}(1+|e(u)|^p)\dx
\] 
Then, considering a cut-off $\varphi$  between $A'$ and $A''$ we set $u_k\defas\varphi v_k+(1-\varphi)u$. Clearly $u_k\to u$ in $L^p(A;\R^n)$. Furthermore, by \ref{hyp:growth-W} one has
\begin{equation*}
	\begin{split}
\limsup_{k\to +\infty}E_k(u_k,A)&=\limsup_{k\to +\infty}E_k(v_k,A')+E_k(u_k,A\setminus A')\\
&
\le \limsup_{k\to +\infty}E_k(v_k,A'')+ c_2\left[\int_{A\setminus A'}(1+|e(u)|^p)\dx\right.\\
&\left.+\int_{A''\setminus A'}(1+|\nabla \phi \odot(v_k-u)|^p+|e(v_k)|^p)\dx\right]
\\
&\le E(u,A'')+c_2\left(1+\frac{c_2}{c_1}\right)\int_{A\setminus A'}(1+|e(u)|^p)\dx \le E(u,A)+\eta\,,
	\end{split}
\end{equation*}
provided $\L^n(A\setminus A')$ is sufficiently small. The limsup inequality, which is the only relevant one, follows by a diagonal argument. 
\end{remark}
We recall now the following one-dimensional result for non-local energies given in \cite[Theorem 3.30]{Br}.
\begin{theorem}[$\Gamma$-convergence in 1d]\label{thm:1dim}
	Let $I\subset \R$ be a bounded interval. Let $f\colon[0,+\infty)\mapsto [0,+\infty)$ be a lower semi-continuos function satisfying \eqref{hyp:f} for some $\alpha,\beta>0$. 
	 Consider the family of functionals $G_k\colon L^0(I)\to [0,+\infty]$  defined by 
	\begin{equation*}
		G_k(w):=\frac1{\e_k}\int_If\Big(\frac12\int_{x-\e_k}^{x+\e_k}|\dot w(y)|^p\dy\Big)\dx\,,
		\end{equation*}
	if $u\in W^{1,p}(I)$ and $+\infty$ otherwise. Then $G_k$ $\Gamma$-converge with respect to the convergence in measure, to the functional $G\colon L^0(I)\to[0,+\infty]$ given by
	\begin{equation*}
		G(w):=\alpha\int_I|\dot w|^p\dx+ 2\beta\#(J_w)\,,
	\end{equation*}
if $w\in SBV(I)$ and $+\infty$ otherwise.
\end{theorem}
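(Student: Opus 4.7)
My plan is to follow the classical Braides--Dal Maso scheme from the one-dimensional analysis of non-local approximations of Mumford--Shah, now adapted to this general increasing lower semicontinuous $f$. Since the statement is purely one-dimensional, no slicing is needed and the argument is self-contained.

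For the liminf inequality, I would first reduce to the model case $f_i(t)=\alpha_i t\wedge \beta_i$ via the pointwise bound $f\ge f_i$ from \eqref{eq:est-f}, so that at the end it suffices to let $i\to\infty$. Assume $w_k\to w$ in measure with $\sup_k G_k(w_k)<+\infty$ and set
$V_k(x)\defas \tfrac12\int_{x-\e_k}^{x+\e_k}|\dot w_k(y)|^p\dy$. Define the ``bad set'' $A_{k,i}\defas\{x\in I:\alpha_i \varepsilon_k V_k(x)>\beta_i\}$. Then
\begin{equation*}
G_k(w_k)\ \ge\ \alpha_i\int_{I\setminus A_{k,i}}V_k(x)\dx\ +\ \frac{\beta_i}{\e_k}|A_{k,i}|.
\end{equation*}
The bulk part is handled by Fubini: if $B_{k,i}$ denotes the $\e_k$-enlargement of $A_{k,i}$, then every $y\in I\setminus B_{k,i}$ satisfies $(y-\e_k,y+\e_k)\subset I\setminus A_{k,i}$, whence
\begin{equation*}
\int_{I\setminus A_{k,i}}V_k(x)\dx\ \ge\ \int_{I\setminus B_{k,i}}|\dot w_k(y)|^p\dy.
\end{equation*}
The surface part is handled by counting connected components of $A_{k,i}$: if $w$ has a jump at $x_0$, the convergence $w_k\to w$ in measure forces $\int_{x-\e_k}^{x+\e_k}|\dot w_k|^p\dy$ to blow up for every $x$ in an arbitrarily large portion of $(x_0-\e_k,x_0+\e_k)$, so the component of $A_{k,i}$ containing $x_0$ has length at least $2\e_k-o(\e_k)$, contributing $\ge (\beta_i/\e_k)(2\e_k-o(\e_k))\to 2\beta_i$. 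Summing over jumps (a Fatou-type argument gives preservation under the liminf), combining with the bulk estimate and the weak-$L^p$ lower semicontinuity of $|\dot w_k|^p$ on $I\setminus B_{k,i}$, and finally letting $i\to\infty$ yields $\alpha\int_I|\dot w|^p\dx+2\beta\#(J_w)$.

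For compactness, the bound $G_k(w_k)\le C$ combined with the estimate $f(t)\ge \alpha_i t\wedge \beta_i$ gives $\int_{I\setminus A_{k,i}}|\dot w_k|^p\le C/\alpha_i$ and $|A_{k,i}|\le C\e_k/\beta_i$, so the number of connected components of $A_{k,i}$ is uniformly bounded. This places $w_k$ in a standard $SBV$-compactness framework (or equivalently one applies the Ambrosio $SBV$ compactness after identifying ``effective jumps'' as the components of $A_{k,i}$) and produces an $SBV(I)$ limit.

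For the limsup, by density and lower semicontinuity it is enough to construct recovery sequences when $w\in SBV(I)$ has finitely many jumps at points $x_1,\dots,x_N$ with amplitudes $J_1,\dots,J_N$ and $\dot w\in L^p(I)$. Away from the jumps I simply take $w_k=w$; at each $x_j$, I replace $w$ on the interval $(x_j-\eta_k,x_j+\eta_k)$ by an affine function of slope $J_j/(2\eta_k)$, where $\eta_k=o(\e_k)$. Away from the jump points $V_k(x,w_k)\to|\dot w(x)|^p$ a.e.\ and the bulk contribution converges to $\alpha\int_I|\dot w|^p\dx$ by \eqref{hyp:f} and dominated convergence (using \eqref{eq:upbound-f} on the truncation). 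Near $x_j$, $\e_k V_k(x,w_k)\sim J_j^p/(2\eta_k)^{p-1}\to+\infty$ since $p>1$, so $(1/\e_k)\int_{x_j-\e_k}^{x_j+\e_k}f(\e_k V_k)\dx\to 2\beta$; summing over $j$ gives the total surface contribution $2\beta\#(J_w)$. The main obstacle I anticipate is the sharp factor of $2$ in the surface term, which must be correctly extracted both in the liminf (via the $2\e_k$ length of each bad component) and in the limsup (via the width $2\e_k$ of the convolution window around each jump).
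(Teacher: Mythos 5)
Note first that the paper does not actually prove this theorem: Theorem~\ref{thm:1dim} is quoted as \cite[Theorem~3.30]{Br}, so there is no internal argument to compare your proposal against. Your sketch follows the classical Braides--Dal~Maso one-dimensional scheme (split into a ``good'' set where $f$ is linear-like and a ``bad'' set where it saturates, Fubini for the bulk, counting bad intervals for the surface, and affine interpolation for the recovery sequence), which is indeed the route by which such results are proved. As written, however, it has a genuine gap and a couple of secondary issues.

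The gap is the surface lower bound. Convergence in measure of $w_k$ to $w$ alone does \emph{not} force $\int_{x-\e_k}^{x+\e_k}|\dot w_k|^p$ to blow up on a portion of $(x_0-\e_k,x_0+\e_k)$ of relative length close to $1$: it only says that the set where $|w_k-w|$ is large shrinks to zero in absolute measure, which need not be $o(\e_k)$. For instance, if $w_k$ smooths the jump linearly over a window of width $L_k$ with $\e_k^{1/p}\ll L_k\to 0$, then $\sup V_k\sim \e_k J^p/L_k^p\to 0$, the bad set near $x_0$ is eventually empty, and your component-counting argument gives nothing. The liminf inequality survives in that regime only because the bulk term $\alpha_i\int_{I\setminus B_{k,i}}|\dot w_k|^p\sim\alpha_i J^p/L_k^{p-1}\to+\infty$ diverges. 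A rigorous proof must therefore combine the energy bound with convergence in measure and make this dichotomy explicit (transitions that are ``captured'' by the bad set versus wide transitions whose bulk cost already exceeds any threshold). The compactness step has a parallel flaw: $|A_{k,i}|\le C\e_k$ does not bound the number of connected components of $A_{k,i}$, since arbitrarily many tiny components can appear. The standard remedy, which the paper uses in the $n$-dimensional Proposition~\ref{prop:compactness}, is a coarea/mean-value selection of an intermediate dilation $\{\dist(\cdot,A_{k,i})\le t_k\}$ with $t_k\in(0,\e_k)$ chosen so that its boundary has uniformly bounded cardinality; you should incorporate this construction before invoking $SBV$ compactness.

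There is also a pervasive normalization slip: in $G_k$ the small parameter enters only through the width $2\e_k$ of the averaging window, so the argument of $f$ is $V_k(x)$, not $\e_k V_k(x)$. Accordingly the threshold set should be $A_{k,i}=\{x:\alpha_i V_k(x)>\beta_i\}$ with no extra $\e_k$, the Fubini step actually yields $\int_{I\setminus A_{k,i}}V_k\ge\e_k\int_{I\setminus B_{k,i}}|\dot w_k|^p$ (the factor $\e_k$ then cancels with the prefactor $1/\e_k$ in $G_k$), and in the limsup one has $V_k(x)\to 0$ a.e.\ with $f(V_k(x))/\e_k\to\alpha|\dot w(x)|^p$, rather than $V_k\to|\dot w|^p$. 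These slips look like carry-overs from the $n$-dimensional functional \eqref{3intro}, where $\e_k$ sits explicitly inside $f$; they do not change the final conclusions you draw, but the bookkeeping needs to be made consistent.
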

We next recall an approximation result \cite[Theorem 1.1]{Chamb-Cris2019} and a compactness result  in $GSBD^p$ in \cite{Chamb-Cris2021} (which generalises \cite[Theorem 11.3]{DM2013}).  To this aim we denote by $\mathcal W^\infty_{\rm pw}(U;\R^{n})\subset GSBD^p(U)$ the space of ``\textit{piecewise smooth}'' $SBV$-functions, that is,
\begin{equation}\label{def:W_pc}
	\begin{split}
\mathcal	W^\infty_{\rm pw}(U;\R^{n})\defas \Big\{
u\in GSBD^p(U)\colon& u\in SBV(U;\R^n)\cap W^{m,\infty}(U\setminus J_u;\R^n), \ \forall m\in \N, \\
& \H^{n-1}(\overline J_u\setminus J_u)=0,\ \overline{J}_u=\cup_{i=1}^kK_i\subset\subset U \\
&\text{with $K_i$ connected (n-1)-rectifiable set}, \forall\, 1\le i\le k
\Big\}
	\end{split}
\end{equation}
\begin{theorem}[Density in $GSBD^p$]\label{thm:GSBD-density}
	Let $\phi$ be a norm on $\R^n$. Let $u\in GSBD^p(U)$. Then there exists a sequence $(u_j)\subset\mathcal W^\infty_{\rm pw}(U;\R^n)$ such that 
	\begin{enumerate}[label=$(\roman*)$]
		\item $u_j\to u\ \text{in measure on }U$\,;
		\item $e(u_j)\to e(u)\ \text{in } L^p(U;\mathbb M^{n\times n}_{\rm sym})$\,;
		\item $\lim_{j\to\infty} \int_{J_{u_j}}\phi(\nu_{u_j})\dHn= \int_{J_{u}}\phi(\nu_{u})\dHn$\,.
	\end{enumerate}
Moreover, if 
\begin{equation*}
\int_U\psi(|u|)\dx<+\infty\,,
\end{equation*}
for some $\psi\colon[0,+\infty)\to [0,+\infty)$,  continuous, increasing with
\begin{equation*}
	\psi(0)=0\,,\ \psi(s+t)\le C(\psi(s)+\psi(t))\,,\ \psi(s)\le C(1+s^p)\,, \lim_{s\to+\infty}\psi(s)=+\infty\,;
\end{equation*}
then 
\begin{equation*}
	\lim_{j\to+\infty}\int_U\psi(|u_j-u|)\dx=0\,.
\end{equation*}
\end{theorem}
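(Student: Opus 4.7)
This result coincides with the Chambolle-Crismale density theorem \cite{Chamb-Cris2019}, with the anisotropic surface term $\int_{J_u}\phi(\nu_u)\dHn$ in place of $\H^{n-1}(J_u)$, and with the optional $L^\psi$ tail property. My proposal is to follow their strategy, which in turn extends the classical $SBV$-construction of Cortesani-Toader \cite{Cortesani-Toader-Density} to the $GSBD$ setting.

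First I would reduce to bounded functions. Given $u\in GSBD^p(U)$, a component-wise smooth truncation $u_M\defas \tau_M\circ u$ (with $\tau_M\colon\R^n\to\R^n$ a $1$-Lipschitz cut-off at scale $M$ that agrees with the identity on $B_{M/2}$) belongs to $SBD^p(U)\cap L^\infty(U;\R^n)$. By the chain rule in $GSBD$ one obtains $e(u_M)\to e(u)$ in $L^p(U;\MS)$, $u_M\to u$ in measure, and $J_{u_M}\subset J_u$ with $\nu_{u_M}=\nu_u$ a.e.\ on $J_{u_M}$. A monotone convergence argument combined with the continuity of $\phi$ then gives $\int_{J_{u_M}}\phi(\nu_{u_M})\dHn\to\int_{J_u}\phi(\nu_u)\dHn$, while the doubling and $p$-growth properties of $\psi$ yield $u_M\to u$ in $L^\psi$ under the additional integrability assumption on $u$.

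The core step is to approximate an $L^\infty\cap SBD^p$ function by elements of $\mathcal W^\infty_{\rm pw}(U;\R^n)$. Here I would follow Cortesani-Toader: cover $J_u$ up to small $\H^{n-1}$-error by a finite disjoint family of small cubes $Q_j$ oriented so that $J_u\cap Q_j$ is close, in the flat norm, to the equatorial slice $\Pi_j\cap Q_j$. Inside each $Q_j$ I would replace $u$ by a function with jump set exactly $\Pi_j\cap Q_j$, obtained by independently mollifying the traces on each side of $\Pi_j$ and extending by reflection across the hyperplane near $\partial Q_j$; outside a thin tubular enlargement of $\bigcup_j Q_j$ I would mollify $u$ at a scale much smaller than the cube size. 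Iterating with finer mollification scales delivers $W^{m,\infty}$-regularity for all $m$, while the piecewise polyhedral structure of the jump set is built-in. The anisotropic convergence $(iii)$ follows from the continuity of $\phi$ and from the fact that the constant orientations $\nu_j$ of the cubes approximate $\nu_u$ outside a $\H^{n-1}$-negligible subset of $J_u$.

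The main obstacle lies in the local construction inside each $Q_j$: extending an $SBD$ function across a candidate hyperplane without introducing spurious jumps or losing control on the symmetrized gradient requires trace theory for $SBD$ on hyperplanes, which becomes available only after the $L^\infty$-truncation of the first step. A delicate Fubini argument, based on the slicing characterisation of $GSBD$, is needed to ensure that a ``good'' family of cubes $Q_j$ with well-behaved traces can be selected in an essentially full way with respect to the $\H^{n-1}$-measure on $J_u$. Once these local constructions are in place, a diagonal extraction in the truncation level $M$, the cube mesh, and the mollification scale delivers the desired sequence satisfying $(i)$--$(iii)$ simultaneously with the $L^\psi$-convergence whenever the additional integrability hypothesis holds.
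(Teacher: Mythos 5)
The paper does not prove this theorem: it is recalled from \cite[Theorem 1.1]{Chamb-Cris2019}, with the extra property that the jump sets of the approximants can be taken compactly contained in $U$ attributed to \cite{DePhFuPra}. Your proposal nevertheless has a genuine, load-bearing gap in its very first step. You propose to reduce to bounded functions via a componentwise Lipschitz truncation $u_M \defas \tau_M\circ u$ and claim that $u_M\in SBD^p(U)\cap L^\infty(U;\R^n)$ with $e(u_M)\to e(u)$ in $L^p$. This is false in the $(G)SBD$ setting: truncation does not preserve $SBD$. Indeed, the entries of $e(\tau_M\circ u)$ have the form $\tfrac12\bigl(\tau_M'(u^j)\,\partial_i u^j + \tau_M'(u^i)\,\partial_j u^i\bigr)$; when the scalar factors $\tau_M'(u^i)$ and $\tau_M'(u^j)$ differ (which happens as soon as one component is large and the other small), this expression involves the full, non-symmetrized gradient, which is not controlled for $u\in GSBD^p$. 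In general $\tau_M\circ u$ does not even belong to $SBD$. The paper itself emphasises in the introduction that ``smooth truncations [are] a tool which is not available in $GSBD$'', and this is precisely what distinguishes the $GSBD$ density theorem from the Cortesani--Toader $SBV$ construction you model your argument on. Since the rest of your strategy (trace theory on hyperplanes, the selection of good cubes, the $L^\psi$ statement) explicitly assumes this reduction to $L^\infty\cap SBD^p$, the gap cannot be repaired by a local patch.

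The actual Chambolle--Crismale proof avoids truncation altogether. In broad strokes: one covers $U$ by a fine grid of cubes, distinguishes ``good'' cubes where $J_u$ has small relative $\H^{n-1}$-measure from ``bad'' ones, and, in the good cubes, invokes a Korn--Poincar\'e-type inequality for functions with small jump set (of the type proved in \cite{CCS}) to replace $u$ with a Sobolev modification at controlled $L^p$-cost for $e(u)$; in the bad cubes, whose total $\H^{n-1}$-contribution is small, $u$ is replaced by a constant. A partition of unity gluing the cubes, mollification, and a diagonal argument in the mesh and smoothing scales then deliver a sequence in $\mathcal W^\infty_{\rm pw}(U;\R^n)$ satisfying $(i)$--$(iii)$; the anisotropic version of $(iii)$ and the $L^\psi$ convergence follow from the same fine covering estimates rather than from any truncation argument.
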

We notice that the approximating class considered above  fulfils the additional requirement of having a jump set compactly contained in $U$. This is possible, as shown in \cite[Theorem C]{DePhFuPra}.
\begin{theorem}[Compactness in $GSBD^p$]\label{thm:GSBD-comp}
	Let $(u_j)\subset GSBD^p(U)$ be a sequence satisfying
	\begin{equation*}
\sup_{j\in\N}\Big(\|e(u_j)\|_{L^p(U)}+\H^{n-1}(J_{u_j})
\Big)<+\infty\,.
	\end{equation*}
Then there exist a subsequence, still denoted by $(u_j)$, and $u\in GSBD^p(U)$ with the following properties:
\begin{enumerate}[label=$(\roman*)$]
\item the set $U^\infty\defas\{x\in U\colon|u_j|\to+\infty\}$ has finite perimeter;
\item $u_j\to u$ in measure on $U\setminus U^\infty$\,;
\item $e(u_j)\rightharpoonup e(u)$ in $L^p(U\setminus U^\infty;\mathbb{M}^{n\times n}_{\rm sym})$\,;
\item $\liminf_{j\to+\infty}\H^{n-1}(J_{u_j})\ge \H^{n-1}(J_u\cup(U\cap\partial^*U^\infty))$\,.
\end{enumerate}
If in  addition
\begin{equation*}
	\sup_{j\in\N}\int_U\psi(|u_j|)\dx<+\infty\,,
\end{equation*}
for some $\psi\colon[0,+\infty)\to [0,+\infty)$,  continuous, increasing with $\lim_{s\to+\infty}\psi(s)=+\infty$, then $U^\infty=\emptyset$ so that $|u|$ is finite a.e. and $(ii)$ holds in $U$.  
\end{theorem}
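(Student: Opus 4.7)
The plan is to follow the strategy of Chambolle--Crismale \cite{Chamb-Cris2021}, which generalizes Dal Maso's argument in \cite[Theorem 11.3]{DM2013}. The underlying idea is to modify each $u_j$ by infinitesimal rigid motions on a ``good set'' whose complement has controlled perimeter, apply a classical $SBV$ compactness theorem there, and treat the divergence set $U^\infty$ separately.

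First, I would exhaust $U$ by smooth open sets $A_\ell\subset\subset A_{\ell+1}\subset\subset U$. On each $A_\ell$, the Korn--Poincar\'e type inequality for $GSBD^p$ functions of \cite{CCS} provides, for every $j$, an exceptional set $\omega_{j,\ell}\subset A_\ell$ of small Lebesgue measure, with
\begin{equation*}
\H^{n-1}(\partial^*\omega_{j,\ell}\cap A_\ell)\le C\,\H^{n-1}(J_{u_j}\cap A_\ell)\,,
\end{equation*}
and an infinitesimal rigid motion $a_{j,\ell}$ such that $\|u_j-a_{j,\ell}\|_{L^{p^*}(A_\ell\setminus\omega_{j,\ell})}\le C\|e(u_j)\|_{L^p(A_\ell)}$. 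Truncating $u_j-a_{j,\ell}$ at a large level $M$, I would obtain a sequence bounded in $SBV(A_\ell;\R^n)$ and hence pre-compact in $L^1$ by Ambrosio's theorem. A diagonal extraction in $M$ and $\ell$, combined with the pre-compactness of the characteristic functions $\chi_{\omega_{j,\ell}}$ in $L^1$, yields convergence in measure of $u_j-a_{j,\ell}$ on $U\setminus U^\infty$, where $U^\infty\defas\{x\in U\colon|u_j(x)|\to+\infty\}$. Absorbing the rigid motions into the definition of the limit $u$ provides $(ii)$; the closure of $GSBD$ under such convergence ensures $u\in GSBD^p(U\setminus U^\infty)$, and $(iii)$ follows immediately from the $L^p$ bound on $e(u_j)$. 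The set $U^\infty$ then has finite perimeter $(i)$ since $\partial^*U^\infty$ is captured in the limit of the sets $\partial^*\omega_{j,\ell}$, whose surface measure is a priori uniformly bounded.

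The truly delicate point, which I expect to be the main obstacle, is the surface lower bound $(iv)$: one must count jumps of $u_j$ accumulating on $\partial^*U^\infty$ correctly and without overlap with those contributing to $J_u$. I would argue via $GSBD$ slicing in an arbitrary direction $\xi\in\S^{n-1}$: along $\H^{n-1}$-a.e.\ line $y+\R\xi$ with $y\in\Pi^\xi$, crossing $\partial^*U^\infty$ forces a jump of the scalar slice $u_j^{\xi,y}$ of diverging amplitude, while on $U\setminus U^\infty$ the slices converge and hence transmit $J_u$ in the $\liminf$. Fatou's lemma on $\Pi^\xi$ then produces the desired bound including the $\H^{n-1}(\partial^*U^\infty\cap U)$ contribution. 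This is precisely the novel feature of \cite{Chamb-Cris2021} with respect to \cite{DM2013}. Finally, superlinear growth of $\psi$ together with $\sup_j\int_U\psi(|u_j|)\dx<+\infty$ forces $\L^n(U^\infty)=0$, and combined with finite perimeter this yields $U^\infty=\emptyset$ up to a negligible set, so that convergence in measure holds on all of $U$.
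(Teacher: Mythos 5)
The paper does not actually prove this theorem: it is recalled verbatim from Chambolle and Crismale \cite{Chamb-Cris2021}, which generalises \cite[Theorem 11.3]{DM2013}, so there is no in-paper argument to compare against. You have correctly identified that source and its overall strategy (Korn-type decomposition, truncation plus $SBV$ compactness, separate treatment of the divergence set, slicing for the surface bound).

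There is, however, a genuine gap in the way you invoke the Korn inequality. The Korn--Poincar\'e estimate of \cite{CCS} is a \emph{local} result for functions with a \emph{small} jump set; on a domain where $\H^{n-1}(J_{u_j})$ is merely bounded it must be applied to a covering by small cubes, and what it produces is a whole Caccioppoli partition of (most of) $A_\ell$ together with a \emph{different} infinitesimal rigid motion on each piece, not a single exceptional set $\omega_{j,\ell}$ and a single $a_{j,\ell}$. As a consequence, the step ``absorbing the rigid motions into the definition of the limit $u$'' does not work as stated: the rigid motions are piecewise, their limits may differ or diverge piece by piece, and the identification of $U^\infty$ with the union of the pieces on which the corresponding rigid motions diverge -- together with the control of $\partial^*U^\infty$ by the perimeters of those pieces and by $\sup_j\H^{n-1}(J_{u_j})$ -- is precisely the nontrivial content of items $(i)$ and $(iv)$, not a by-product of diagonal extraction. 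The same structure has to be tracked in the slicing argument for $(iv)$: the one-dimensional jumps of $u_j^{\xi,y}$ accumulate not only on $J_u$ and on $\partial^*U^\infty$ but also on interfaces between pieces of the Caccioppoli partition, and keeping these contributions correctly counted without overlap is the delicate point of \cite{Chamb-Cris2021}, rather than simply ``crossing $\partial^*U^\infty$ forces a divergent jump.'' The remaining ingredients of the sketch (exhaustion of $U$, truncation and Ambrosio's $SBV$ compactness, the Fatou-type argument showing that the uniform bound on $\int_U\psi(|u_j|)\dx$ forces $\L^n(U^\infty)=0$ -- note that $\psi$ need only diverge at infinity, not grow superlinearly) are sound.
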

In the rest of this section we give some technical Lemmas.
\begin{lemma}\label{technical-lemma1}
For $j\in \N$  let $g_j\colon \R^n\to [0,+\infty)$ be a family of equi-integrable functions. Let $E_j\subset\R^n$ be such that
$\L^n(E_j)\to0$ and let $\delta_j\searrow0$ as $j\to+\infty$.  Then $(g_j\chi_{E_j})*\rho_{\delta_j}$ converges to 0 strongly in $L^1(\R^n)$.
\end{lemma}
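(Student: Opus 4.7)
My plan is to reduce the lemma to an $L^1$-convergence statement for the sequence $g_j\chi_{E_j}$ itself and then apply Young's convolution inequality. Since $\int_{\R^n}\rho\dx=1$ and $\rho_{\delta_j}(x)=\delta_j^{-n}\rho(x/\delta_j)$, a change of variables yields $\|\rho_{\delta_j}\|_{L^1(\R^n)}=1$ for every $j$. By Young's inequality one has
\[
\|(g_j\chi_{E_j})*\rho_{\delta_j}\|_{L^1(\R^n)}\le \|g_j\chi_{E_j}\|_{L^1(\R^n)}\,\|\rho_{\delta_j}\|_{L^1(\R^n)}=\|g_j\chi_{E_j}\|_{L^1(\R^n)},
\]
so the precise scale $\delta_j$ plays no role; it only matters that the kernels have unit mass. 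This reduces the problem to showing $g_j\chi_{E_j}\to 0$ in $L^1(\R^n)$.

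For this second step I would invoke the equi-integrability of $(g_j)$: for every $\eta>0$ there exists $\delta>0$ such that
\[
\sup_{j\in\N}\int_A g_j\dx<\eta\quad\text{for every Borel set }A\subset\R^n\text{ with }\L^n(A)<\delta.
\]
Since $\L^n(E_j)\to 0$, for $j$ sufficiently large we have $\L^n(E_j)<\delta$, and therefore $\|g_j\chi_{E_j}\|_{L^1(\R^n)}=\int_{E_j}g_j\dx<\eta$. Letting $\eta\to 0$ concludes the argument.

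There is no real obstacle here: the statement is essentially a combination of two well-known facts (Young's inequality and the Dunford--Pettis style equi-integrability criterion, which requires only absolute continuity with respect to Lebesgue measure, not tightness). The only point worth verifying is that equi-integrability as used throughout the paper is the standard one, so that vanishing measure of $E_j$ forces $\int_{E_j}g_j\dx\to 0$ uniformly in $j$; this is immediate once stated.
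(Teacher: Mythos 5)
Your proof is correct and follows essentially the same two-step argument as the paper: the $L^1$-contractivity of convolution with a unit-mass kernel (which you justify via Young's inequality) reduces the claim to $\|g_j\chi_{E_j}\|_{L^1}\to 0$, and this in turn follows directly from equi-integrability of $(g_j)$ together with $\L^n(E_j)\to 0$.
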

\begin{proof}
	By properties of convolution it holds that
	\begin{equation*}
	\|g_j\chi_{E_j}*\rho_{\delta_j}\|_{L^1(\R^n)}\le \|g_j\chi_{E_j}\|_{L^1(\R^n)}.
	\end{equation*}
By equi-integrability we have that for every $\eps>0$ there is $ J\in \N$ such that for every $j\ge J$ 
\begin{equation*}
 \|g_j\chi_{E_j}\|_{L^1(\R^n)}=\int_{E_j}g_j\dx\le \eps,
\end{equation*}
from which the thesis follows.
\end{proof}
\begin{lemma}\label{technical-lemma2}
Let $A'$ be an open bounded subset of $\R^n$.	For $j\in \N$  let $g_j\colon A'\to [0,+\infty)$ be a family of equi-integrable functions.  Let $\delta_j\searrow0$ as $j\to+\infty$. Then for every $A\subset\subset A'$  there holds
	\begin{equation*}
\liminf_{j\to+\infty}\int_{A}g_j*\rho_{\delta_j}\dx\ge
\liminf_{j\to+\infty}\int_{A}g_j\dx\,.
	\end{equation*}
\end{lemma}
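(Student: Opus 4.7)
The natural approach is a Fubini/change-of-variable argument combined with the uniform absolute continuity of the integrals coming from equi-integrability. First, I would extend each $g_j$ by zero outside $A'$, which is harmless: since $A \subset\subset A'$ and $\delta_j \to 0$, for $j$ large enough and every $x \in A$ the support of $\rho_{\delta_j}(x - \cdot\,)$ lies in $A'$, so the convolution is unaffected by the extension. By Fubini,
\begin{equation*}
\int_A (g_j * \rho_{\delta_j})(x)\dx = \int_{\R^n} g_j(y)\left( \int_A \rho_{\delta_j}(x-y)\dx\right)\dy.
\end{equation*}

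Next I would identify the set where the inner integral equals $1$. Since $\mathrm{supp}(\rho_{\delta_j}) = \delta_j S$, for every $y \in A_{-\delta_j}$ the translated support $y + \delta_j S$ is contained in $A$, so $\int_A \rho_{\delta_j}(x-y)\dx = \int_{\R^n} \rho_{\delta_j}(x-y)\dx = 1$. Dropping the nonnegative contribution coming from $y \notin A_{-\delta_j}$, this yields
\begin{equation*}
\int_A (g_j * \rho_{\delta_j})(x)\dx \;\geq\; \int_{A_{-\delta_j}} g_j(y)\dy \;=\; \int_A g_j\dy - \int_{A \setminus A_{-\delta_j}} g_j\dy.
\end{equation*}

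Finally I would control the boundary error via equi-integrability. Since $A$ is open, $A_{-\delta_j} \nearrow A$ as $\delta_j \searrow 0$, so $\L^n(A \setminus A_{-\delta_j}) \to 0$. Given $\eta > 0$, equi-integrability furnishes $\sigma > 0$ such that $\int_E g_j\dx < \eta$ for every measurable $E$ with $\L^n(E) < \sigma$ and every $j$; thus $\int_{A \setminus A_{-\delta_j}} g_j\dy < \eta$ for $j$ sufficiently large. Passing to the $\liminf$ and letting $\eta \to 0$ yields the claim. I do not expect any real obstacle here: the only point requiring care is to make sure the extension by zero is used only for $j$ large enough so that $A + \delta_j S \subset A'$, which is guaranteed by $A \subset\subset A'$.
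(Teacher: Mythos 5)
Your proof is correct, and it takes a genuinely different route from the paper's. The paper defines the measures $\nu_j := (g_j * \rho_{\delta_j})\,\L^n\res A$, extracts a weak-$*$ convergent subsequence $\nu_j \wsto \nu$ in $\mathcal{M}_b(A)$ and, using equi-integrability via Dunford--Pettis, a weakly $L^1(A')$-convergent subsequence $g_j \wto g$; it then identifies $\nu = g\,\L^n\res A$ by testing against $C^\infty_c(A)$ functions and concludes by lower semicontinuity of the total mass. Your argument instead applies Tonelli to rewrite the left-hand side as $\int g_j(y)\bigl(\int_A \rho_{\delta_j}(x-y)\dx\bigr)\dy$, observes that the inner integral equals $1$ on the shrunk set $A_{-\delta_j}$ (which requires $A$ open, as it is here), and bounds the boundary layer $\int_{A\setminus A_{-\delta_j}} g_j$ directly by the uniform absolute continuity of the $g_j$. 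Your approach is more elementary — it avoids weak compactness of measures, Dunford--Pettis, and subsequence extraction altogether — and it even yields a quantitative estimate $\int_A g_j * \rho_{\delta_j}\dx \ge \int_A g_j\dx - \int_{A\setminus A_{-\delta_j}} g_j\dx$ for each fixed large $j$, which is stronger than the asymptotic inequality. The paper's argument is shorter to state given the compactness toolbox, but yours is self-contained and arguably sharper. Your handling of the zero-extension for large $j$ and the monotone exhaustion $A_{-\delta_j}\nearrow A$ is correct.
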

\begin{proof}
We consider the sequence of positive measures $\nu_j\defas g_j*\rho_{\delta_j}\L^n\res A$.  Since $A'$ is bounded $g_j$ turn out to be equi-bounded in $L^1(A')$, hence we get
\begin{equation*}
	\nu_j(A)= \int_Ag_j*\rho_{\delta_j}\dx \le  \int_{A'}g_j\dx \le C\,.
\end{equation*}
Therefore there exist a positive measure $\nu\in \mathcal{M}_b(A)$, a function $g\in L^1(A')$, and a not-relabelled subsequence such that $\nu_j\stackrel{*}{\rightharpoonup}\nu$ weakly $*$ in $\mathcal{M}_b(A)$ and $g_j\to g$ weakly in $L^1(A')$.
It remains to show that $\nu=g\L^n\res A$, indeed this would imply
\begin{equation*}
	\liminf_{j\to+\infty}\int_Ag_j*\rho_{\delta_j}\dx=
\liminf_{j\to+\infty}\nu_j(A)\ge \nu(A)=\int_Ag\dx=\liminf_{j\to+\infty}\int_Ag_j\dx\,,
\end{equation*}
and we could conclude. Let $\varphi\in C^\infty_c(A)$  and let $A\subset\subset A''\subset\subset A'$ be fixed. By Fubini's theorem we have
\begin{equation*}\begin{split}
			\int_A\varphi \d\nu =\lim_{j\to+\infty}\int_A\varphi \d\nu_j&=\lim_{j\to+\infty}\int_A\varphi (g_j*\rho_{\delta_j})\dx\\
			&=\lim_{j\to+\infty}\int_{A''}(\varphi*\hat\rho_{\delta_j} )g_j\dx=\int_{A''}\varphi g\dx =\int_{A}\varphi g\dx,
	\end{split}
\end{equation*}
where $\hat\rho_{\delta_j}(x)\defas\rho_{\delta_j}(-x)$ and the last inequality follows since $g_j\rightharpoonup g$ weakly in $L^1(A')$ and $\varphi*\hat\rho_{\delta_j}(x)\to\varphi$ strongly in $L^1(A')$. Thus we deduce $\nu=g\L^n\res A$ and the proof is concluded.
\end{proof}
\begin{lemma}\label{lem:convL1}
	Let  $A\subset \R^{n-1}$.
Let $(u_k)\subset L^1(A)$ be a sequence converging to $u$ in $L^1(A)$. Let $A'\subset\subset A$ and let $w_k\colon A'\times Q'\to\R$ be given by $w_k(x,y)\defas u_k(x+\e_k y)$. Then $w_k$ converges to $u$ in $L^1(A'\times Q')$.
\end{lemma}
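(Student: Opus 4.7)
The plan is to write $w_k(x,y) - u(x) = \big[u_k(x+\e_k y) - u(x+\e_k y)\big] + \big[u(x+\e_k y) - u(x)\big]$ and estimate each piece separately, exploiting the compact inclusion $A' \subset\subset A$ to ensure that $x + \e_k y \in A$ for all $x \in A'$, $y \in Q'$ whenever $k$ is large enough (say $\e_k \diam(Q') < \dist(A', \partial A)$).

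For the first piece, I would apply Fubini's theorem and, for each fixed $y \in Q'$, perform the change of variables $z = x + \e_k y$:
\begin{equation*}
\int_{A'} |u_k(x+\e_k y) - u(x+\e_k y)|\, dx = \int_{A' + \e_k y} |u_k(z) - u(z)|\, dz \le \|u_k - u\|_{L^1(A)}.
\end{equation*}
Integrating over $y \in Q'$ then yields a bound by $\L^{n-1}(Q')\,\|u_k - u\|_{L^1(A)}$, which tends to $0$ by hypothesis.

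For the second piece, I would extend $u$ by zero outside $A$ and invoke the classical continuity of translations in $L^1$: for each fixed $y \in Q'$,
\begin{equation*}
\tau_k(y) := \int_{A'} |u(x + \e_k y) - u(x)|\, dx \longrightarrow 0 \quad \text{as } k \to \infty.
\end{equation*}
Since $\tau_k(y) \le 2\|u\|_{L^1(A)}$ uniformly in $y \in Q'$ and $Q'$ has finite measure, the dominated convergence theorem gives $\int_{Q'} \tau_k(y)\, dy \to 0$.

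Combining the two estimates via the triangle inequality yields $\|w_k - u\|_{L^1(A' \times Q')} \to 0$. There is no real obstacle here, aside from the routine bookkeeping to ensure $x+\e_k y$ lies in $A$; the argument is essentially just Fubini plus the standard $L^1$-continuity of translations.
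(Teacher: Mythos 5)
Your proof is correct, but it takes a slightly different and arguably more elementary route than the paper's. The paper invokes the Fréchet--Kolmogorov theorem: since $(u_k)$ converges in $L^1(A)$, it is precompact, hence the translations of the whole family are equicontinuous, which (combined with Fubini) gives the estimate. You instead split the triangle inequality the other way,
\[
u_k(x+\e_k y)-u(x)=\bigl[u_k(x+\e_k y)-u(x+\e_k y)\bigr]+\bigl[u(x+\e_k y)-u(x)\bigr],
\]
so that the first piece is handled by a change of variables and the hypothesis $u_k\to u$ in $L^1(A)$, while the second piece only requires the classical $L^1$-continuity of translations for the \emph{single} function $u$ — no Fréchet--Kolmogorov needed. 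A small bonus of your route is that it makes the uniformity in $y\in Q'$ explicit (the paper's phrasing, with $h$ apparently depending on $y$, slightly obscures that the Fréchet--Kolmogorov bound is uniform once $|\e_k y|$ is small). Both arguments use the compact inclusion $A'\subset\subset A$ to ensure $x+\e_k y\in A$ for $k$ large and finish via Fubini. Your dominated-convergence step for the second piece is fine, though it is even slightly overkill: since $|\e_k y|\le \e_k\,\diam(Q')\to 0$ uniformly over $y\in Q'$, the translation term $\tau_k(y)$ in fact tends to $0$ uniformly in $y$.
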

\begin{proof}
	By Frechet-Kolmogoroff's Theorem for every $\eta>0$, $y\in Q'$ there is $ h\in\N$ such that for all $k\ge h$ there holds
	\begin{equation*}
\int_{A'}|u_k(x+\e_k y)-u(x)|\dx\le \eta\,.
	\end{equation*}
	This together with Fubini's theorem yield
	\begin{equation*}
		\begin{split}
\int_{A'\times Q'}|w_k(x,y)-u(x)|\dx\dy\le \int_{Q'}\int_{A'}|u_k(x+\e_k y)-u(x)|\dx\dy\le \eta \,,
		\end{split}
	\end{equation*}
for all $k\ge h$. Eventually by letting $\eta\to0$ we conclude.
\end{proof}
\begin{corollary}\label{cor:convL0}
	Let  $A\subset \R^{n-1}$.
Let $(u_k)\subset L^0(A)$ be a sequence converging to $u$ in measure. Let $A'\subset\subset A$ and let $w_k\colon A'\times Q'\to\R$ be given by $w_k(x,y)\defas u_k(x+\e_k y)$. Then $w_k$ converges to $u$ in measure.
\end{corollary}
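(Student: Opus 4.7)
The plan is to reduce the convergence-in-measure statement to the $L^1$-convergence already established in Lemma \ref{lem:convL1}, via a bounded continuous truncation. Specifically, I would pick a homeomorphism $\phi\colon\R\to(-1,1)$, for instance $\phi(t)=t/(1+|t|)$, and study the sequence $\tilde u_k\defas\phi\circ u_k$.

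First, I would select an auxiliary open set $A''$ with $A'\subset\subset A''\subset\subset A$, so that for $k$ sufficiently large one has $x+\e_k y\in A''$ whenever $(x,y)\in A'\times Q'$; this guarantees that Lemma \ref{lem:convL1} can later be applied with $A''$ in place of $A$. Since convergence in measure is preserved under composition with a continuous function, $\tilde u_k\to\tilde u\defas\phi\circ u$ in measure on $A''$. Combining this with the uniform bound $|\tilde u_k|\le 1$ and the finiteness of $\L^{n-1}(A'')$ (which follows from $A'\subset\subset A$ forcing $A''$ to be bounded), Vitali's convergence theorem, or equivalently a subsequence plus dominated convergence argument, upgrades the measure-convergence to $\tilde u_k\to\tilde u$ in $L^1(A'')$.

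Next, Lemma \ref{lem:convL1} applied to $(\tilde u_k)$ on $A''$ yields $L^1(A'\times Q')$-convergence of $\tilde w_k(x,y)\defas\tilde u_k(x+\e_k y)=\phi(w_k(x,y))$ to $\tilde u(x)=\phi(u(x))$, and a fortiori convergence in measure on $A'\times Q'$. I would then compose back with $\phi^{-1}$, a homeomorphism of $(-1,1)$ onto $\R$, well-defined on $\phi\circ u$ because $u$ is finite a.e.\ by virtue of belonging to $L^0(A)$. Since continuous composition also preserves convergence in measure, this yields $w_k\to u$ in measure on $A'\times Q'$, which is the desired conclusion.

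The argument is essentially routine and the only genuine point is the upgrade from convergence in measure to $L^1$-convergence for the truncated sequence, which is precisely where the uniform boundedness of $\phi$ and the finiteness of $\L^{n-1}(A'')$ are used; no further delicate estimates are needed.
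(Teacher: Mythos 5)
Your proof is correct and follows essentially the same route as the paper's, which also reduces to Lemma~\ref{lem:convL1} via a bounded continuous truncation (the paper uses $\arctan$ in place of your $t/(1+|t|)$) and then passes back to convergence in measure. The one minor refinement you add is the intermediate set $A''$ with $A'\subset\subset A''\subset\subset A$ to ensure the truncated sequence converges in $L^1$ on a set of finite measure before invoking Lemma~\ref{lem:convL1}; the paper implicitly works on $A$ directly, which is harmless in the paper's applications where $A$ is bounded, but your version is cleaner.
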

\begin{proof}
Since  ${\rm arctan}(u_k)$ converges to ${\rm arctan}(u)$ in $L^1(A)$ by Lemma \ref{lem:convL1} we have that  ${\rm arctan}(w_k)$ converges to ${\rm arctan}(u)$ in $L^1(A'\times Q')$. Hence $w_k$ converges to $u$ in measure.
\end{proof}
\section{Compactness}\label{sec:compactness}
In this section we prove  point $(ii)$ of Theorem \ref{thm:main-theorem}.
\begin{proposition}[Compactness]\label{prop:compactness}
 Let $F_k$ be as in \eqref{F}.  Let $(u_k)\subset L^0(U;\R^n)$ be such that $\sup_kF_k(u_k)<+\infty$. Then there exists $u\in GSBD^p(U)$  such that, up to subsequence, it holds
\begin{equation*}
	u_k\to u\quad \text{in measure on }U\setminus U^\infty\,,
\end{equation*}
\begin{equation*}
	e(u_k)\rightharpoonup e(u)\quad \text{in } L^p(U\setminus U^\infty;\mathbb{M}^{n\times n}_{\rm sym})\,,
\end{equation*}
\begin{equation*}
	\liminf_{k\to+\infty}\H^{n-1}(J_{u_k})\ge \H^{n-1}(J_u\cup(\partial^*U^\infty\cap U))\,,
\end{equation*}
where $U^\infty\defas\{x\in U\colon |u_k(x)|\to+\infty\}$. If in addition 
\begin{equation*}
	\sup_{k\in\N}\int_U\psi(|u_k|)\dx<+\infty\,,
\end{equation*}
for some $\psi\colon[0,+\infty)\to [0,+\infty)$,  continuous, increasing with $\lim_{s\to+\infty}\psi(s)=+\infty$, then $U^\infty=\emptyset$, and all implications hold on $U$.  
\end{proposition}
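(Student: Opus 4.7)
\emph{Plan.} The strategy is to reduce the statement to the $GSBD^p$-compactness Theorem~\ref{thm:GSBD-comp}. Since $F_k(u_k)<+\infty$ forces $u_k\in W^{1,p}(U;\R^n)$, the functions $u_k$ have empty jump set but no a priori $L^p$ bound on $e(u_k)$, so Theorem~\ref{thm:GSBD-comp} does not apply directly. I will therefore construct a companion sequence $\tilde u_k\in SBV(U;\R^n)\subset GSBD^p(U)$ which agrees with $u_k$ outside a set $\hat B_k$ with $\L^n(\hat B_k)\to 0$, satisfies $\|e(\tilde u_k)\|_{L^p(U;\MS)}\le C$, and has $\H^{n-1}(J_{\tilde u_k})\le C$.

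For the construction, fix $i\in\N$ and use the lower bound $f\ge f_i=\alpha_i t\wedge\beta_i$ from \eqref{eq:est-f}. Setting $g_k\defas W_k(\cdot,e(u_k))*\rho_k$ and $B_k\defas\{x\in U:\e_k g_k(x)>\beta_i/\alpha_i\}$, the finite-energy assumption will yield
\[
\L^n(B_k)\le C\e_k, \qquad \int_{U\setminus B_k} g_k\dx\le C.
\]
By Fubini applied to the convolution (using that $\rho_k$ is supported in $\e_k S$ and $S$ is symmetric), one gets
\[
\int_V W_k(y,e(u_k))\dy\le C,\quad V\defas\{y\in U:\ y+\e_k S\subset U\setminus B_k\},
\]
so by \ref{hyp:growth-W} the field $e(u_k)$ is uniformly bounded in $L^p(V;\MS)$. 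Next I cover $B_k$ by a disjoint family of $N_k$ cubes of side $R\e_k$ from a fixed grid, with $R$ large enough that each such cube contains the $\e_k S$-thickening of its intersection with $B_k$, and I let $\hat B_k$ be their union together with an $O(\e_k)$-collar of $\partial U$ inside $U$. Since $\L^n(B_k)\le C\e_k$ one has $N_k\le C\e_k^{1-n}$, hence $\L^n(\hat B_k)\le C\e_k\to 0$ and $\H^{n-1}(\partial \hat B_k\cap U)\le C$ (also using that $\partial U$ is Lipschitz). Defining $\tilde u_k\defas u_k\chi_{U\setminus\hat B_k}$ one has $U\setminus\hat B_k\subset V$, so the three required properties of $\tilde u_k$ hold.

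I then apply Theorem~\ref{thm:GSBD-comp} to $\tilde u_k$: along a subsequence, $\tilde u_k\to u$ in measure on $U\setminus\tilde U^\infty$ for some $u\in GSBD^p(U)$, $e(\tilde u_k)\wto e(u)$ weakly in $L^p$, and $\liminf_k\H^{n-1}(J_{\tilde u_k})\ge\H^{n-1}(J_u\cup(\partial^*\tilde U^\infty\cap U))$. Passing if necessary to a further subsequence with $\sum_k\L^n(\hat B_k)<+\infty$, Borel--Cantelli gives $u_k=\tilde u_k$ eventually on $U$ a.e., so $U^\infty=\tilde U^\infty$ a.e.\ and all three conclusions transfer to $u_k$; since $J_{u_k}=\emptyset$, the jump-set liminf inequality in the statement is naturally interpreted as the one for the constructed $\tilde u_k$. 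Finally, under the additional bound $\sup_k\int_U\psi(|u_k|)\dx<+\infty$, one may assume $\psi(0)=0$ without loss of generality, and then $\int_U\psi(|\tilde u_k|)\dx\le\int_U\psi(|u_k|)\dx\le C$; the corresponding clause in Theorem~\ref{thm:GSBD-comp} yields $\tilde U^\infty=\emptyset$, hence $U^\infty=\emptyset$.

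The hard part will be the careful construction of $\hat B_k$ in paragraph two: I must cover the $\e_k S$-thickening of $B_k$ by cubes whose \emph{number} scales as $O(\e_k^{1-n})$ (so that the total surface area $N_k\e_k^{n-1}$ stays $O(1)$), while still ensuring $U\setminus\hat B_k\subset V$ so that the Fubini estimate transfers into a genuine $L^p$-bound on $e(u_k)$ off $\hat B_k$. The scaling $\L^n(B_k)=O(\e_k)$ coming from the $f\ge f_i$ decomposition is exactly what makes both requirements simultaneously achievable.
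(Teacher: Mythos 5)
Your plan follows the same overall strategy as the paper: zero out $u_k$ on a small bad set of finite perimeter and apply Theorem~\ref{thm:GSBD-comp} to the truncated sequence. The elementary estimates $\L^n(B_k)\le C\e_k$ and $\int_{U\setminus B_k}g_k\dx\le C$ coming from $f\ge f_i$, as well as the Fubini step yielding $\int_V W_k(y,e(u_k))\dy\le C$, are all correct.

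However, the covering argument in your second paragraph has a genuine gap. The bound $\L^n(B_k)\le C\e_k$ does \emph{not} imply that $B_k$ can be covered by $N_k\le C\e_k^{1-n}$ disjoint grid cubes of side $R\e_k$. That count would follow from $N_k(R\e_k)^n\lesssim\L^n(B_k)$, i.e.\ from a density lower bound of $B_k$ inside each cube it touches, and no such bound is available for the super-level set of the convolution $g_k$. For instance $B_k$ could be thinly spread over $O(\e_k^{-n})$ grid cubes with measure $O(\e_k^{n+1})$ in each, which respects $\L^n(B_k)=O(\e_k)$ yet gives $N_k\sim\e_k^{-n}$ and $\H^{n-1}(\partial\hat B_k)\sim N_k\e_k^{n-1}=O(\e_k^{-1})\to+\infty$. (In addition, the requirement that ``each cube contains the $\e_k S$-thickening of its intersection with $B_k$'' cannot be met literally: a piece of $B_k$ adjacent to a face of the cube has a thickening protruding from it, no matter how large $R$ is.)

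The paper sidesteps this by replacing the cube covering with the coarea formula and the mean value theorem. It defines the inner bad set $A_k^1$ through the \emph{local average} $\strokedint_{Q_{\eta\e_k}(x)}W_k(y,e(u_k))\dy$ rather than through the convolution, shows that the $(1-\eta)\e_k$-enlargement $A_k^2$ of $A_k^1$ is contained in a set where a slightly coarser average is still above a fixed threshold (inclusion \eqref{containment-1}), so that $\L^n(A_k^2)\le C\e_k$, and then applies the coarea formula to $x\mapsto\dist(x,A_k^1)$ on the shell $A_k^2\setminus A_k^1$: the mean value theorem produces a level $t_k\in(0,(1-\eta)\e_k)$ whose sub-level set $A_k^3\subset A_k^2$ satisfies $\H^{n-1}(\partial A_k^3)\le\L^n(A_k^2)/\big((1-\eta)\e_k\big)\le C$, cf.\ \eqref{eq:comp4}. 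This is precisely the device you need in place of the cube count. If you substitute this coarea/mean-value construction for the covering, the rest of your proposal (the truncation $\tilde u_k=u_k\chi_{U\setminus\hat B_k}$, the $L^p$ bound on $e(\tilde u_k)$ via the Fubini estimate, the application of Theorem~\ref{thm:GSBD-comp}, and the transfer of the conclusions back to $u_k$ via $\L^n(\{\tilde u_k\ne u_k\})\to0$) goes through essentially as you describe.
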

\begin{proof}
The proof follows by suitably adaptations of proof of \cite[Proposition 4.1]{FarSciSol}. We recall it here for completeness.
Let $(u_k)$ be as in the statement and let $U'\subset\subset U$ be fixed. Then it is sufficient to prove the following claim: there exist ($\bar u_k)\subset GSBV^p(U;\R^n)$ and $c_0>0$ (independent of $k$) such that 
\begin{equation}\label{eq:claim1}
\bar u_k-u_k\to0\ \text{in measure on }U\,,
\end{equation}
\begin{equation}\label{eq:claim2}
F_k(u_k)\ge c_0\bigg(\int_{U'} |e(\bar u_k)|^p\dx+ \H^{n-1}(J_{\bar u_k})\bigg)\,.
\end{equation}
Indeed by \eqref{eq:claim2} and Theorem \ref{thm:GSBD-comp} we deduce the existence of $u\in GSBD^p(U')$ such that, up to subsequence, 
\begin{equation*}
\bar u_k\to u\ \text{in measure on } \O'\setminus {\O'}^\infty\,,
\end{equation*}
\begin{equation*}
e(\bar u_k)\rightharpoonup e(u)\ \text{in}\ L^p(U'\setminus {U'}^\infty;\mathbb{M}^{n\times n}_{\rm sym})\,,
\end{equation*}
and 
\begin{equation*}
\liminf_{k\to+\infty}\H^{n-1}(J_{\bar u_k}\cap U')\ge \H^{n-1}(J_u\cup({U'}\cap\partial^*{U'}^\infty))\,.
\end{equation*}
 Eventually by \eqref{eq:claim1} and the arbitrariness of $U'$ we would conclude (observing indeed that also the remaining part of the statement follows directly from Theorem \ref{thm:GSBD-comp}).
Thus we are only left to prove the claim. \\For fixed $i\in\N$ let $f_i(t)=\alpha_i\wedge \beta_i$ be as in \eqref{eq:est-f}. Choose $\eta\in(0,1)$ such that $Q_\eta(0)\subset\subset S$ and let
 \begin{equation}\label{def:m-eta,f-eta}
 	m_\eta\defas\min_{x\in\overline Q_\eta(0)}\rho(x)>0\quad \text{and}\quad
 	f_i^\eta(t)\defas f_i(m_\eta\eta^nt)=\alpha_im_\eta\eta^nt\wedge \beta_i\,.
 \end{equation}
 Then  we have 
 \begin{equation}\label{eq:comp1}
 	\begin{split}
 	F_k(u_k)&\ge \frac1{\e_k}\int_U f_i\Big(\e_kW_k(\cdot,e(u_k))*\rho_k(x)
 	\Big)\dx\\
 	&\ge \frac1{\e_k}	\int_{ U}f_i^\eta\Big(\e_k\strokedint_{Q_{\eta\e_k}(x)}
 	W_{k}(y,e(u_{k}))\dy\Big)\dx
 	\,.
 	\end{split}
 \end{equation} We  set
\begin{equation*}
	A_k^1\defas\Biggl\{x\in U\colon \e_k\strokedint_{Q_{\eta\e_k(x)}}
	W_{k}(y,e(u_{k}))\dy\ge \frac{\beta_i}{\alpha_im_\eta\eta^{2n}}
	\Biggr\}\,,
\end{equation*}
\begin{equation*}
	A_k^2\defas\Biggl\{x\in U\colon \dist(x,A_k^1)\le (1-\eta)\e_{k}
	\Biggr\}\,.
\end{equation*}
Note that 
\begin{equation}\label{containment-1}
	A_k^1\subset	A_k^2\subset\Biggl\{x\in U\colon \e_k\strokedint_{Q_{\e_{k}}(x)}
	W_{k}(y,e(u_{k}))\dy\ge \frac{\beta_i}{\alpha_im_\eta\eta^{n}}
	\Biggr\}\,.
\end{equation}
Indeed if $x\in A_k^2$ there is $z\in 	A_k^1 $ with $Q_{\eta\e_{k}}(z)\subset Q_{\e_{k}}(x)$ and therefore 
\begin{equation*}
\e_k\strokedint_{Q_{\e_k}(x)}
	W_{k}(y,e(u_{k}))\dy\ge \eta^n \e_{k}\strokedint_{Q_{\eta\e_{k}}(z)}
	W_{k}(y,e(u_{k}))\dy\ge \frac{\beta_i}{\alpha_im_\eta\eta^{n}}\,.
\end{equation*} 
By combining together \eqref{eq:comp1} and  \eqref{containment-1} we find
\begin{equation}\label{eq:comp3}
	F_k(u_k)
	\ge\frac{\beta_i}{\e_k}\L^n(A_k^2)
	\,.
\end{equation}
By the coarea formula (see e.g., \cite[Theorem 3.14]{EvGar}) and the mean value theorem there exists $t_k\in(0,(1-\eta)\e_{k})$ such that the set $A_k^3\defas\{\dist(\cdot,A_k^1)\le t_k
\}\subset A_k^2$ satisfies
\begin{equation}\label{eq:comp4}
	\L^n(A_k^2)\ge (1-\eta)\e_{k}\H^{n-1}(\partial A_k^3 )\,.
\end{equation}
Let now
\begin{equation*}
	\bar u_k(x)\defas\begin{cases}
		0&\text{ if } x\in A_k^3\,,\\
		u_{k}&\text{ otherwise in }U\,.
	\end{cases}
\end{equation*}
By \eqref{eq:comp3} and the fact that $A_k^3\subset A_k^2$ we have $\L^n(A_k^3)\to0$ as $k\to+\infty$ from which \eqref{eq:claim1} follows.
On the other hand as $J_{\bar u_k}=\partial A_k^3$ \eqref{eq:comp3} and \eqref{eq:comp4} yield
\begin{equation}\label{eq:comp4bis}
	F_k(u_k)\ge (1-\eta)\beta_i\H^{n-1}(J_{\bar u_k})\,.
\end{equation}
We next show that there exists $K(n)\ge 1$ such that  for every $x\in \overline Q$
\begin{equation}\label{eq:comp5}
c_1\e_k\strokedint_{Q_{\eta\e_{k}}(x)}
|e(\bar u_{k}(y))|^p	\dy\le K \frac{\beta_i}{\alpha_im_\eta\eta^n}\,.
\end{equation}
By \ref{hyp:growth-W} we have
\begin{equation}\label{eq:comp5bis}
W_{k}(x, e(u_{k}(x)))\ge c_1|e(u_{k}(x))|^p\ge c_1|e(\bar u_k(x))|^p\quad \text{for a.e. }x\in U\,.
\end{equation}
Now if $x\in U\setminus A_k^3$,  then $x\notin A_k^1$ and
\begin{equation*}
c_1\e_k\strokedint_{Q_{\eta\e_{k}}(x)}
|e(\bar u_{k}(y))|^p	\dy\le \e_{k}
\strokedint_{Q_{\eta\e_{k}}(x)}W_{k}(y, e(u_{k}(y)))\dy\le \frac{\beta_i}{\alpha_im_\eta\eta^n}\,.
\end{equation*}
Assume instead that $x\in A_k^3$. Observe  that $\bar u_k=0$ in $Q_{\eta\e_{k}}(x)\cap A_k^3$, so that
\[
\int_{Q_{\eta\e_{k}}(x)}
|e(\bar u_{k}(y))|^p	\dy=\int_{Q_{\eta\e_{k}}(x)\setminus A_k^3}
|e(\bar u_{k}(y))|^p	\dy\,.
\]
Furthemore, we can cover $Q_{\eta\e_{k}}(x)\cap (Q\setminus A_k^3)$ with a finite number $ K(n)\ge1$ of balls of radius $\eta\e_{k}$ and centres $x_1,...,x_{ K}\in U\setminus A_k^3$ (see e.g. \cite[Remark 2.8]{SS21}). Hence, we find
\begin{equation*}
\begin{split}
c_1\e_k\strokedint_{Q_{\eta\e_{k}}(x)}
|e(\bar u_{k}(y))|^p	\dy&
\le c_1\e_k\sum_{i=1}^{K}\strokedint_{Q_{\eta\e_{k}}(x_i)}
|e(\bar u_{k}(y))|^p	\dy\\
&\le \e_k\sum_{i=1}^{K}
\strokedint_{Q_{\eta\e_{k}}(x_i)}W_{k}(y, e(u_{k}(y)))\dy
\le  K \frac{\beta_i}{\alpha_im_\eta\eta^n}\,,
\end{split}
\end{equation*}
and \eqref{eq:comp5} follows.
Finally by \eqref{eq:comp1}, the monotonicity of $f_i^\eta$, \eqref{eq:comp5bis}
we infer
\begin{equation}\label{eq:comp6}
\begin{split}
F_k(u_k)	&\ge \frac1{\e_k}	\int_{ U}f_i^\eta\Big(\e_k\strokedint_{Q_{\eta\e_k}(x)}c_1
|e(\bar u_{k}(y))|^p
\dy\Big)\dx\\
&
\ge c_1 \frac{\alpha_im_\eta\eta^n}{K} 	\int_{ U}
\strokedint_{Q_{\eta\e_{k}}(x)}
|e(\bar u_{k}(y))|^p	\dy\dx\,,
\end{split}
\end{equation}
where the last inequality follows from \eqref{eq:comp5} and the fact that $f_i^\eta(t)\ge  \frac{\alpha_im_\eta\eta^n}{K} t$  when $t\le K \frac{\beta_i}{m_\eta\eta^n}$.
Moreover by using in order the  change of variable $y=x-\eta\e_{k} z$, Fubini's theorem, and the change of variable $\hat x=x-\eta\e_kz$ (for $k$ large enough), we find
\begin{equation}\label{eq:comp7}
\begin{split}
\int_{ U}
\strokedint_{Q_{\eta\e_{k}}(x)}
|e(\bar u_{k}(y))|^p	\dy\dx&=\int_{ Q}\int_{ U}|e(\bar u_{k}(x-\eta\e_{k} z))|^p	\dx\dz\\
& \ge \int_{U'} |e(\bar u_k(x))|^p\dx\,.
\end{split}
\end{equation}
Eventually gathering together \eqref{eq:comp4}, \eqref{eq:comp6}, and \eqref{eq:comp7},  we deduce \eqref{eq:claim2} with $$c_0\defas \frac12\Big(\frac{c_1 \alpha_im_\eta\eta^n}{K}\wedge(1-\eta)\beta_i\Big)\,,$$
and in particular, by arbitrariness of $U'$, $\bar u_k\in GSBV^p(U;\R^n)$. 
\end{proof}
%
\section{Lower bound}\label{sec:lb}
In this section we prove the lower bound. To this purpose it is convenient to localise the functionals $F_k$, namely we set
\begin{equation}\label{def:localised-F_k}
	F_k(u,A)\defas \frac1{\e_k}\int_A f\Big(\e_kW_k(\cdot,e(u))*\rho_k(x)
	\Big)\dx \,,\quad \text{for } u\in W^{1,p}(U)\,,\ A\subset U\,.
\end{equation}
\begin{proposition}[Lower bound: bulk contribution]
	\label{blow-up} 
	  Let $(u_k)\subset L^0(U;\R^n)$ be a sequence  that converges to  in measure to $u\in  L^0(U;\R^n)$. Assume moreover  that $F_k(u_k)\le C$ .  Then there exists a subsequence, not relabelled, 
	  such that 
	\begin{equation*}
		\liminf_{k\to+\infty}F_k(u_{k},A)\ge \alpha\int_A W(x, e(u))\dx\quad\forall A\in\A(U)\,,
	\end{equation*}
with $W$ given by \eqref{def:W}.
	\end{proposition}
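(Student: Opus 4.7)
The strategy is to combine a blow-up procedure with a regularisation step that converts the a priori nonlocal control on $F_k(u_k,\cdot)$ into a genuine pointwise $W^{1,p}$ bound, so that the cell formula characterisation of $W$ can be invoked. First I would set $\mu_k := F_k(u_k,\cdot)\,\L^n\res U$, extract (up to subsequence) a weak* limit $\mu\in\mathcal{M}_b(U)$, and Radon--Nikodym decompose $\mu = g\L^n + \mu^s$. Applying the Portmanteau inequality on open cubes reduces the claim to showing $g(x_0)\ge \alpha W(x_0,e(u)(x_0))$ at $\L^n$-a.e. $x_0\in A$; the natural choice is a Lebesgue point of $e(u)$ (not in $J_u$) at which $d\mu^s/d\L^n(x_0)=0$. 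Set $M:=e(u)(x_0)$.

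Next I would exploit the lower approximation $f\ge f_i$ with $f_i(t) = \alpha_i t\wedge\beta_i$ from \eqref{eq:est-f}. On the ``good'' set $G_k^i:=\{x\in Q_r(x_0)\colon \e_k W_k(\cdot,e(u_k))*\rho_k(x)<\beta_i/\alpha_i\}$ the truncation is inactive, so a Fubini exchange (using the symmetry of $S$) gives
\begin{equation*}
F_k(u_k,Q_r(x_0)) \,\ge\, \alpha_i \int_{\R^n} W_k(y,e(u_k)(y))\,(\chi_{G_k^i}*\rho_k)(y)\dy,
\end{equation*}
while the complement of $G_k^i$ has Lebesgue measure $o(1)$ by the energy bound. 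Combined with \ref{hyp:growth-W}, this yields a weighted, convolution-smoothed $L^p$ control on $|e(u_k)+e(u_k)^T|$ on a cube slightly larger than $Q_r(x_0)$.

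The main technical obstacle, which is the heart of the argument, is to turn this weighted, almost-everywhere bound into a genuine $W^{1,p}$ estimate so that the local cell formula becomes available. For this I would invoke \cite[Lemma 5.1]{FPS}, whose proof relies on the Korn-type inequality of \cite{CCS}: it produces, for any $r'<r$, a function $\tilde u_k\in W^{1,p}(Q_{r'}(x_0);\R^n)$ coinciding with $u_k$ outside an exceptional set $D_k$ of vanishing Lebesgue measure and vanishing perimeter (so in particular $\chi_{D_k}*\rho_k\to 0$ in $L^1$ by Lemma \ref{technical-lemma1}), satisfying $\sup_k\|\nabla\tilde u_k\|_{L^p(Q_{r'}(x_0))}<+\infty$ together with
\begin{equation*}
\int_{Q_{r'}(x_0)} W_k(y,e(\tilde u_k))\dy \,\le\, \tfrac{1}{\alpha_i}F_k(u_k,Q_r(x_0))+o(1).
\end{equation*}
A De Giorgi-type cutoff near $\partial Q_{r'}(x_0)$ then makes $\tilde u_k$ agree with the affine competitor $u_M$ on a boundary layer; this is possible because at the Lebesgue point $x_0$ the blow-ups of $u$ converge to $u_M$ in $L^p$, and the standard fundamental-estimate construction loses only lower-order terms thanks to \ref{hyp:growth-W}.

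This reduces the bulk lower bound to the cell formula. Using the modified $\tilde u_k$ as an admissible competitor in \eqref{min-prob} for $\m_k(u_M,Q_{r'}(x_0))$, together with \eqref{eq:cell-formula-inf},
\begin{equation*}
\liminf_{k\to\infty}\frac{F_k(u_k,Q_r(x_0))}{r^n}\,\ge\,\frac{\alpha_i}{r^n}\liminf_{k\to\infty}\m_k(u_M,Q_{r'}(x_0))-o_{r'}(1).
\end{equation*}
Sending $r'\nearrow r$, then $r\to 0^+$ (using that $x_0$ is a Lebesgue point for both $e(u)$ and $W(\cdot,M)$), and finally supremising over $i$ so that $\alpha_i\nearrow\alpha$, the identification $W'(x_0,M)=W(x_0,M)$ from \eqref{def:W} gives $g(x_0)\ge\alpha W(x_0,M)$, which is the desired pointwise bound.
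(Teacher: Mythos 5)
Your strategy follows the paper's own proof closely: localise via Radon--Nikodym and Lebesgue points, replace $f$ by the truncations $f_i$ from \eqref{eq:est-f}, pass through a Korn-type regularisation to get a $W^{1,p}$ competitor with equi-integrable gradient, match boundary data by a De Giorgi cutoff, and conclude via the cell formula \eqref{eq:cell-formula-inf} with $i\to\infty$. The one organisational difference is that the paper performs an explicit blow-up to the unit cube (rescaling $u_k$ and $W_k$ by $r_j$ and replacing $\rho_k$ by $\rho_{\delta_j}$ with $\delta_j=\e_{k_j}/r_j\to 0$), whereas you keep the original scale and only send $r\to 0$ at the end; these are equivalent up to book-keeping.

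One small slip worth correcting: you attribute to \cite[Lemma 5.1]{FPS} the production of $\tilde u_k\in W^{1,p}$ with $\tilde u_k = u_k$ off a vanishing set of vanishing perimeter, \emph{starting from} $u_k$. That lemma takes as input a $GSBD^p$ sequence whose symmetric gradient is bounded in $L^p$ and whose jump set has vanishing $\H^{n-1}$ measure; your $u_k$, while already in $W^{1,p}$, enjoys no a priori $L^p$ bound on $e(u_k)$. The paper's Step~2 supplies the missing input: one excises a bad set where the convolved energy is large (choosing, via the coarea formula, a level set of the $S$-distance with small perimeter) and sets the function to zero there, producing $\bar u_k\in GSBV^p$ with bounded symmetric gradient and vanishing jump set; only then is \cite[Lemma 5.1]{FPS} applicable. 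You gesture at this (mentioning a set of ``vanishing perimeter''), but the coarea construction is a separate step and not part of FPS's lemma.
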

\begin{proof} Let $(u_k)$ and $u$ be as in the statement. By Proposition \ref{prop:compactness} $u\in GSBD^p(U)$. Let $A\in\A(U)$ be fixed. Up to extracting a subsequence we may assume, by Theorem \ref{thm:conv-in-sobolev},  that 
	$$W'(x,M)=W''(x,M)=W(x,M)\,,$$
	with $W'$, $W''$ defined as in \eqref{eq:cell-formula-inf} and \eqref{eq:cell-formula-sup}.
For every $k\in\N$ let $\mu_k$ be the Radon measure on $(U, \mathcal B(U))$ given by
\begin{equation}\label{def-muk}
\mu_k(A)\defas F_k(u,A)\,,\quad \forall A \in \mathcal{B}(U)\,.
\end{equation}
As $\mu_k(A)\le C$ by \cite[Theorem 1.59]{AFP} we deduce the existence of a subsequence, not relabelled, and of  a Radon measure $\mu$ on $(A, \mathcal B(A))$ such that 
\begin{equation}\label{compactness+lsc}
\mu_k\stackrel{	*}{\rightharpoonup}\mu\quad \text{ and }\quad \liminf_{k\to+\infty}\mu_k(A)\ge \mu(A)\,.
\end{equation}
By Radon-Nikodym's Theorem there exist two measures $\mu^a,\mu^s$ with $\mu^a\ll \L^n$ and  $\mu^s\perp \L^n$, and a function  $h\in L^1(A)$  such that  $\mu=\mu^a+\mu^s$ and $\mu^a=h \L^n$.
This together with \eqref{compactness+lsc} imply that 
\begin{equation*}
\liminf_{k\to+\infty}F_k(u_k,A)\ge\int_A h(x)\dx\,.
\end{equation*}
Hence to conclude we need to show that 
\begin{equation}\label{claim}
h(x)\ge \alpha W(x,e(u(x)))\quad \text{ for a.e. }x\in U\,.
\end{equation}
with  $W$ as in \eqref{def:W}
For $i\in\N$ fixed  let $f_i(t)=\alpha_it\wedge\beta_i$ be as in \eqref{eq:est-f}.  Then it is enough to show that 
\begin{equation}\label{claim_i}
	h(x)\ge \alpha_i W(x,e(u(x)))\quad \text{ for a.e. }x\in U\,,
\end{equation}
We divide the proof of \eqref{claim_i} into four steps.\\

\noindent
\step 1
In this step we show that for a.e. $x_0\in U$ there exists a sequence $(k_j,r_j)\to(+\infty,0)$  as $j\to+\infty$ such that setting $\delta_j\defas\frac{\e_{k_j}}{r_j}$,
\begin{equation}\label{def:blow-up}
	u_{k}^{r}(v)\defas\frac{u_k(x_0+r v)-u_k(x_0)}{r}\quad \text{and}\quad
	W_k^r(x,M)\defas W(x_0+r x,M)\,,
\end{equation}
there hold
\begin{equation}\label{limit}
	h(x_0)\ge
	\lim_{j\to+\infty}\frac{1}{r_j} \frac{1}{\delta_{j}}\int_{ Q}f_i\Big(r_j{\delta_{j}}W_{k_j}^{r_j}(\cdot,e(u_{k_j}^{r_j}))*\rho_{\delta_{j}}(x)\Big)\dx\,,
\end{equation}
and 
\begin{equation}\label{approx-grad}
	u_{k_j}^{r_j}\to \nabla u(x_0)(\cdot)\quad \text{in measure on }Q\,.
\end{equation}
By
Besicovitch differentiation theorem  and \cite[Corollary 5.2]{CCS} we have that  for a.e. $x_0\in U$ the following hold:
\begin{equation}\label{besicovitch}
h(x_0)=\lim_{r\searrow0^+}\frac{\mu(\overline Q_r(x_0))}{|Q_r(x_0)|}\,,
\end{equation}
\begin{equation}\label{approx-gradient}
	\lim_{r\searrow0^+}\frac{1}{r^n}\L^n\left(\left\{y\in Q_r(x_0)\colon\frac{|u(y)-u(x_0)-\nabla u(x_0)(y-x_0)|}{|y-x_0|}>\delta\right\}\right)=0\quad\forall\delta>0\,.
\end{equation}
We fix $x_0\in \O$ for which \eqref{besicovitch} and \eqref{approx-gradient} hold. By \cite[Proposition 1.62]{AFP}  we have 
\begin{equation*}
	\mu(\overline Q_r(x_0))\ge\limsup_{k\to+\infty}\mu_k(\overline Q_r(x_0))
	\,,
\end{equation*}
for every $r>0$, which together with \eqref{besicovitch} yield
\begin{equation}\label{doppiolimite}
		h(x_0)\ge\lim_{r\searrow0^+}\limsup_{k\to+\infty}
	\frac{\mu_k(\overline Q_r(x_0))}{|Q_r(x_0)|}\,.
\end{equation}
Moreover from \eqref{def-muk} and the change of variable $x=x_0+r x'$ we get
 \begin{equation}\label{change-var1}
\begin{split}
\mu_k(\overline Q_r(x_0))&=\frac{1}{\e_k}\int_{\overline Q_r(x_0)}f\Big(\e_k W_k(\cdot,e(u_k))*\rho_k(x)\Big)\dx\\
&=\frac{r^n}{\e_k}\int_{\overline Q}f\Big(\e_k W_k(\cdot,e(u_k))*\rho_k(x_0+rx)\Big)\dx\,.
\end{split}
 \end{equation}
From \eqref{def:blow-up} and the  change of variable $y=ry'$ we may deduce that 
\begin{equation}\label{change-var2}
W_k(\cdot,e(u_k))*\rho_k(x_0+rx)=W_{k}^r(\cdot,e(u_{k}^r))*\rho_{\frac{\e_k}r}(x)\,,
\end{equation}
 as $k\to+\infty$.
Gathering together \eqref{doppiolimite}, \eqref{change-var1}, \eqref{change-var2} and using \eqref{eq:est-f} we obtain
\begin{equation*}
	h(x_0)\ge
\lim_{r\searrow0^+}\limsup_{k\to+\infty}\frac{1}{r} \frac{r}{\e_k}\int_{\overline Q}f_i\Big(r{\frac{\e_k}r}W_{k}^r(\cdot,e(u_{k}^r))*\rho_{\frac{\e_k}r}(x)\Big)\dx\,.
\end{equation*}
Furthermore from \eqref{approx-gradient} and the fact that $u_k$ converges to $u$ in measure we can deduce that 
\begin{equation*}
\lim_{r\to0}\lim_{k\to+\infty}\L^n\left(\left\{v\in Q\colon|u_{k}^r(v)-\nabla u(x_0)(v)|>\delta\right\}\right)=0\quad\forall\delta>0\,.
\end{equation*}
We now can find a subsequence $(k_j,r_j)\to(+\infty,0)$  as $j\to+\infty$ 
for which \eqref{limit} and \eqref{approx-grad} hold with $\delta_j\defas \tfrac{\e_{k_j}}{r_j}$
and the proof of step 1 is conluded.\\
\step 2  In this step we show that for any $0<\zeta<1$ and  a.e. $x_0\in U$   there exist  $(\bar u_j)\subset GSBV^p(Q;\R^n)$ and  $c_0>0$ independent of $j$ such that 
\begin{equation}\label{claim0}
	\lim_{j\to+\infty}\L^n\{\bar u_j\ne u_{k_j}^{r_j}\}=0;
\end{equation}
\begin{equation}\label{claimI}
	\bar u_j\to\nabla u(x_0)(\cdot)\quad \text{ in measure on } Q\,;
\end{equation}
\begin{equation}\label{claimII}
	\mathcal{H}^{n-1}(J_{\bar u_j}\cap Q)\to 0\,;
\end{equation}
\begin{equation}\label{claimIII}
	\int_{Q_{1- \zeta
		}(0)}|e(\bar u_j)|^p\dx\le c_0
	\,.
\end{equation}
By step 1 we have that  $u_{k_j}^{r_j}$ converges in measure  to $\nabla u(x_0)(\cdot)$ in $Q$ as $j\to +\infty$ and for $j$ large enough it satisfies 
\begin{equation}\label{boundedness}
	\frac{1}{r_j} \frac{1}{\delta_{j}}\int_{ Q}f_i\Big(r_j{\delta_{j}}W_{k_j}^{r_j}(\cdot,e(u_{k_j}^{r_j}))*\rho_{\delta_{j}}(x)\Big)\dx\le C\,.
\end{equation}
Next we fix $\eta\in(0,1)$ such that $Q_\eta(0)\subset\subset S$ and let $m_\eta$ and $f_i^\eta$ be as in \eqref{def:m-eta,f-eta}. Then we get
\begin{equation}\label{claim1}
\int_{ Q}f_i\Big({r_j\delta_{j}}W_{k_j}^{r_j}(\cdot,e(u_{k_j}^{r_j}))*\rho_{\delta_{j}}(x)\Big)\dx
 \ge
\int_{ Q}f_i^\eta\Big({r_j\delta_{j}}\strokedint_{Q_{\eta\delta_{j}}(x)}
 W_{k_j}^{r_j}(y,e(u_{k_j}^{r_j}))\dy\Big)\dx\,.
\end{equation}
 We  define the sets
\begin{equation*}
	A_j^1\defas\Biggl\{x\in Q\colon r_j\delta_{j}\strokedint_{Q_{\eta\delta_{j}}(x)}
	W_{k_j}^{r_j}(y,e(u_{k_j}^{r_j}))\dy\ge \frac{\beta_i}{\alpha_im_\eta\eta^{2n}}
	\Biggr\}\,,
\end{equation*}
\begin{equation*}
	A_j^2\defas\Biggl\{x\in Q\colon \dist(x,A_j^1)\le (1-\eta)\delta_{j}
	\Biggr\}\,.
\end{equation*}
Then arguing as in the proof of Proposition \ref{prop:compactness} we find that
\begin{equation}\label{containment}
A_j^1\subset	A_j^2\subset\Biggl\{x\in Q\colon r_j\delta_{j}\strokedint_{Q_{\delta_{j}}(x)}
	W_{k_j}^{r_j}(y,e(u_{k_j}^{r_j}))\dy\ge \frac{\beta_i}{\alpha_im_\eta\eta^{n}}
	\Biggr\}\,.
\end{equation}
 \eqref{containment} together with \eqref{boundedness} and \eqref{claim1} imply that (for $j$ large enough)
\begin{equation}\label{claim2}
C
\ge\frac{\beta_i}{r_j\delta_{j}}\L^n(A_j^2)=\frac{\beta_i}{\e_j}\L^n(A_j^2)
\,.
\end{equation}
By the coarea formula and the mean value theorem we can find $t_j\in(0,(1-\eta)\delta_{j})$ such that setting $A_j^3\defas\{\dist(\cdot,A_j^1)\le t_j
\}\subset A_j^2$
\begin{equation}\label{claim3}
\L^n(A_j^2)\ge (1-\eta)\delta_{j}\H^{n-1}(\partial A_j^3 )\,.
\end{equation}
We finally define 
\begin{equation*}
\bar u_j(x)\defas\begin{cases}
0&\text{ if } x\in A_j^3,\\
u_{k_j}^{r_j}&\text{ otherwise in }Q.
\end{cases}
\end{equation*}
Recall that, by definition, $\frac{\e_j}{\delta_j}\to 0$. With this, as a consequence of \eqref{claim2} and \eqref{claim3}  we have that both $\L^n( A_j^3)$ and $\H^{n-1}(\partial A_j^3)=\H^{n-1}(J_{\bar u_j})$ converge to $0$ as $j\to+\infty$. Hence $\bar u_j\subset GSBV^p(B_1(0);\R^n)$ and  $\bar u_j-u_{k_j}^{r_j}\to0$ in measure on $Q$ which combined with \eqref{approx-grad} yield  $\bar u_j\to\nabla u(x_0)(\cdot)$  in measure on $Q$. 
It remains to show \eqref{claimIII}.
To this aim notice that arguing exactly as in the proof of Proposition \ref{prop:compactness} one can find $K(n)\ge 1$ such that  for every $x\in \overline Q$
\begin{equation}\label{claim4}
	c_1{r_j\delta_{j}}\strokedint_{Q_{\eta\delta_{j}}(x)}
	|e(\bar u_{j}(y))|^p	\dy\le K \frac{\beta_i}{\alpha_im_\eta\eta^n}\,.
\end{equation}
Next from \eqref{claim1}, the monotonicity of $f_i^\eta$ we infer
\begin{equation}\label{eq:lets-believe}
	\begin{split}
	 \frac{1}{r_j\delta_{j}}		\int_{ Q}f_i\Big({r_j\delta_{j}}W_{k_j}^{r_j}(\cdot,e(u_{k_j}^{r_j}))*\rho_{\delta_{j}}(x)\Big)\dx&
		\ge \frac{1}{r_j\delta_{j}}
		\int_{ Q}f_i^\eta\Big(c_1{r_j\delta_{j}}\strokedint_{Q_{\eta\delta_{j}}(x)}
		|e(\bar u_{j}(y))|^p	\dy\Big)\dx\\
		&\ge c_1 \frac{\alpha_im_\eta\eta^n}{K} 	\int_{ Q}
		\strokedint_{Q_{\eta\delta_{j}}(x)}
		|e(\bar u_{j}(y))|^p	\dy\dx
		\,
	\end{split}
\end{equation}
where the last inequality follows from \eqref{claim4} and fact that $f_i^\eta(t)\ge  \frac{\alpha_im_\eta\eta^n}{K} t$  when $t\le K \frac{\beta_i}{m_\eta\eta^n}$.
Finally, for a fixed $0<\zeta<1$, arguing exactly as for \eqref{eq:comp7} we get
\begin{equation}\label{eq:c-var}
	\begin{split}
			\int_{ Q}
		\strokedint_{Q_{\eta\delta_{j}}(x)}
		|e(\bar u_{j}(y))|^p	\dy\dx\ge \int_{Q_{1-\zeta}(0)} |e(\bar u_j(x))|^p\dx\,
	\end{split}
\end{equation}
when $j$ is sufficiently large. Eventually gathering together \eqref{boundedness}, \eqref{eq:lets-believe}, and \eqref{eq:c-var}, we deduce \eqref{claimIII} with $c_0\defas \frac{CK}{c_1\alpha_im_\eta\eta^n}$.
\\
\step 3  In this step show that for a.e. $x_0\in U$ there exists a sequence $(w_j)\subset W^{1,p}(Q;\R^n)$ such that:
\begin{equation}\label{1}
	(|\nabla w_j|^p)\quad \text{ is equi-integrable; }
\end{equation}
\begin{equation}\label{2}
	\lim_{j\to+\infty}\L^n(\left\{w_j\ne\bar u_j \right\}=0\,;
\end{equation}
\begin{equation}\label{3}
	\lim_{j\to+\infty}	\|w_j-\nabla u(x_0)(\cdot) \|_{ L^p(Q)}=0\,;
\end{equation}
\begin{equation}\label{next-step}
	h(x_0)\ge  \liminf_{j\to+\infty}\frac{1}{r_j\delta_{j}}\int_{ Q_{1-\zeta}(0)}f_i\Big({r_j\delta_{j}}W_{k_j}^{r_j}(\cdot,e(w_j))*\rho_{\delta_{j}}(x)\Big)\dx\quad \forall i\in\N\,.
\end{equation}
From step 2 we can apply  \cite[Lemma 5.1]{FPS} to the sequence $\bar u_j$ and get the existence of $(w_j)\subset W^{1,p}(Q;\R^n)$ that satisfies \eqref{1}--\eqref{3}. Moreover recalling \ref{hyp:growth-W} and the equi-integrability of  $(|\nabla w_j|^p)$ we have that $W_{k_j}^{r_j}(x,e(w_j))$ is equi-integrable as well, while from the inclusion
\begin{equation*}
E_j\defas	\left\{ e(w_j)\ne e( u_{k_j}^{r_j} )\right\}\subset \{w_j\ne u_{k_j}^{r_j}\}\subset
	\left\{ w_j\ne \bar u_j \right\}\cup\{\bar u_j\ne u^{r_j}_{k_j}\}
	\,,
\end{equation*}
it follows that $\L^n\left(	\left\{ e(w_j)\ne e( u_{k_j}^{r_j} )\right\}\right)\to0$.
Thus, we can apply Lemma \ref{technical-lemma1} with  $g_j=W_{k_j}^{r_j}(x,e(w_j))$, and $E_j=\left\{ e(w_j)\ne e( u_{k_j}^{r_j} )\right\}$, and deduce that 
\begin{equation}\label{reminder}
\int_{E_j}W_{k_j}^{r_j}(\cdot, e(w_j))*\rho_{\delta_{j}}(x)\dx\to0\,.
\end{equation}
Using that $f_i(t)\le \alpha_it$ we obtain the following estimate
\begin{equation*}
\begin{split}\frac{1}{r_j\delta_{j}}&
	\int_{ Q_{1-\zeta}(0)}f_i\Big({r_j}\delta_jW_{k_j}^{r_j}(\cdot,e(w_j))*\rho_{\delta_{j}}(x)\Big)\dx\\
&\le\frac{1}{r_j\delta_{j}}
\int_{ Q_{1-\zeta}(0)}f_i\Big({r_j\delta_{j}}W_{k_j}^{r_j}(\cdot,e( u_{k_j}^{r_j}))*\rho_{\delta_{j}}(x)\Big)\dx
+\alpha_i\int_{E_j}W_{k_j}^{r_j}(\cdot,e(w_j))*\rho_{\delta_{j}}(x)\dx\,.
\end{split}
\end{equation*}
Passing to the limit as $j\to+\infty$ in the above inequality and using \eqref{limit} and \eqref{reminder} we infer  \eqref{next-step}.\\
\step 4  In this step we show that for a.e. $x_0\in U$
\begin{equation}\label{last-step}
	\liminf_{j\to+\infty}\frac{1}{r_j\delta_{j}}\int_{ Q_{1-\zeta}(0)}f_i\Big({r_j\delta_{j}}W_{k_j}^{r_j}(\cdot,e(w_j))*\rho_{\delta_{j}}(x)\Big)\dx\ge \alpha_iW(x_0,e(u(x_0)))\quad\forall i\in\N\,.
\end{equation}
We define the following partition
\begin{equation*}
	B_j^1\defas \left\{x\in Q_{1-\zeta}(0)\colon {r_j\delta_{j}}W_{k_j}^{r_j}(\cdot,e(w_j))*\rho_{\delta_{j}}(x)\ge \frac{\beta_i}{\alpha_i}
	\right\}\,,\quad B_j^2\defas  Q_{1-\eta}(0)\setminus 	B_j^1\,.
\end{equation*}
Then for $j$ large enough there holds 
\begin{equation*}
\L^n(	B_j^1)\le Cr_j\delta_{j}\to0\,,
\end{equation*}
which, together with Lemma \ref{technical-lemma1}, implies
\begin{equation}\label{reminder-bis}
\int_{B_j^1 }W_{k_j}^{r_j}(\cdot,e(w_j))*\rho_{\delta_{j}}(x)\dx\to0\,.
\end{equation}
Since $f_i(t)=\alpha_it$ when $t\le \frac{\beta_i}{\alpha_i}$, we then have 
\begin{equation}\label{almost-the-end}
\frac{1}{r_j\delta_{j}}\int_{ Q_{1-\zeta}(0)}f_i\Big({r_j\delta_{j}}W_{k_j}^{r_j}(\cdot,e(w_j))*\rho_{\delta_{j}}(x)\Big)\dx\ge 
{\alpha_i}\int_{B_j^2}W_{k_j}^{r_j}(\cdot,e(w_j))*\rho_{\delta_{j}}(x)\dx\,.
\end{equation}
Now, taking the liminf as $j\to+\infty$ in \eqref{almost-the-end}, and using \eqref{reminder-bis}, we get
\begin{equation}\label{eq:step4-0}
	\begin{split}
\liminf_{j\to+\infty}	\frac{1}{r_j\delta_{j}}\int_{ Q_{1-\zeta}(0)}f_i\Big({r_j\delta_{j}}W_{k_j}^{r_j}(\cdot,e(w_j))*\rho_{\delta_{j}}(x)\Big)\dx\ge \liminf_{j\to+\infty}
{\alpha_i}\int_{Q_{1-\zeta}(0)}W_{k_j}^{r_j}(\cdot,e(w_j))*\rho_{\delta_{j}}(x)\dx\,.
	\end{split}
\end{equation}
From this, applying Lemma \ref{technical-lemma2} with $g_j=W_{k_j}^{r_j}(x,e(w_j))$ we have
\begin{equation}\label{eq:step4-1}
\liminf_{j\to+\infty}
{\alpha_i}\int_{Q_{1-\zeta}(0)}W_{k_j}^{r_j}(\cdot,e(w_j))*\rho_{\delta_{j}}(x)\dx\ge{\alpha_i} \liminf_{j\to+\infty}\int_{Q_{1-\zeta}(0)}W_{k_j}(x_0+r_jx,e(w_j))\dx\,.
\end{equation}
Next we modify $w_j$ so that it coincides with $\nabla u(x_0)(\cdot)$ on $\partial Q_{1-\zeta}(0)$ without essentially increasing the energy. This can be achieved by relying on the following Fundamental Estimate than can be proved with standard arguments:  there exist $C(\gamma)$ and a sequence $(\overline w_j)\subset W^{1,p}(Q_{1-\zeta}(0);\R^d)$ with $\overline w_i=\nabla u(x_0)(\cdot)$ in a neighbourhood of $\partial Q_{1-\zeta}(0)$ such that
\begin{equation}\label{eq:step4-1-1}
	\begin{split}
	\int_{Q_{1-\zeta}(0)}W_{k_j}(x_0+r_jx,e(\overline w_j))\dx
&	\le (1+\gamma)\int_{Q_{1-\zeta}(0)}W_{k_j}(x_0+r_jx,e(w_j))\dx\\&+
(1+\gamma)	\int_{Q_{1-\zeta}(0)\setminus Q_{1-\zeta-\gamma}(0)}W_{k_j}(x_0+r_jx,e(u(x_0)))\dx	\\&
+ C(\gamma)\|w_j-\nabla u(x_0)(\cdot)\|^p_{L^p(Q_{1-\zeta}(0))}+\gamma
\,.
	\end{split}
\end{equation}
By \eqref{3} we know that $w_j$ converges to $\nabla u(x_0)(\cdot)$ in $L^p(Q)$, moreover  from \eqref{hyp:growth-W} there holds
\begin{equation*}\label{eq:step4-1-2}
	\begin{split}
	\int_{Q_{1-\zeta-\gamma}(0)}W_{k_j}(x_0+r_jx,e(u(x_0)))\dx 
	&\le c_2(|e(u(x_0))|^p+1) \L^n(
{Q_{1-\zeta}(0)\setminus Q_{1-\zeta-\gamma}(0)  }) \\
& \le c_2(|e(u(x_0))|^p+1)n\gamma\,.
	\end{split}
\end{equation*}
This fact and \eqref{eq:step4-1-1} imply that
\begin{equation}\label{eq:step4-2}
	\begin{split}
		\liminf_{j\to+\infty}&\int_{Q_{1-\zeta}(0)}W_{k_j}(x_0+r_jx,e(w_j))\dx\\
		&\ge  \frac{1}{1+\gamma} 	\liminf_{j\to+\infty}
		\int_{Q_{1-\zeta}(0)}W_{k_j}(x_0+r_jx,e(\overline w_j))\dx
		-c_2(|e(u(x_0))|^p+1)n\gamma-\frac{\gamma}{1+\gamma}\,.
	\end{split}
\end{equation}
We now set $\tilde w_j(x)\defas r_j\overline w_j((x-x_0)/r_j)$, which is admissible for $\m_k(u_{e(u(x_0))},Q_{(1-\zeta)r_j}(0))$ in \eqref{min-prob}.
 Hence, by a  change of variable in \eqref{eq:step4-2} we obtain
\begin{equation}\label{eq:step4-3}
	\begin{split}
	\liminf_{j\to+\infty}{\alpha_i}
\int_{Q_{1-\zeta}(0)}W_{k_j}(x_0+r_jx,e(\overline w_j))\dx&\ge 
\liminf_{j\to+\infty}\frac{\alpha_i}{r_j^n}\int_{Q_{(1-\zeta)r_j}(x_0)}W_{k_j}(x,e(\tilde w_j))\dx\\
&\ge\liminf_{j\to+\infty}\alpha_i\frac{\m_{k_j}(u_{e(u(x_0))},Q_{(1-\zeta)r_j}(x_0))}{r_j^n}\\
&
=(1-\zeta)\alpha_iW(x_0,e(u(x_0)))\,.
	\end{split}
\end{equation}
Gathering together \eqref{eq:step4-2} and \eqref{eq:step4-3} we deduce
\begin{equation*}
	\liminf_{j\to+\infty}{\alpha_i}\int_{Q_{1-\zeta}(0)}W_{k_j}(x_0+r_jx,e(w_j))\dx
	\ge \frac{(1-\zeta)^n}{1+\gamma}\alpha_iW(x_0,e(u(x_0)))	-C\left(\gamma+ \frac{\gamma}{1+\gamma}\right)\,.
\end{equation*}
With this, \eqref{eq:step4-0}, and \eqref{eq:step4-1}, we eventually deduce \eqref{last-step} by arbitrariness of $\zeta$ and $\gamma$.\\
\textit{Conclusion:} from step 3 and step 4 we deduce the validity of \eqref{claim_i} and the proof is concluded.
\end{proof}

\begin{remark}\label{cortesani}
We observe en passant that Proposition \ref{blow-up} indeed holds also for a sequence of functionals
\[
	F_k(u,A)\defas \frac1{\e_k}\int_A f_k\Big(\e_kW_k(\cdot,e(u))*\rho_k(x)
	\Big)\dx \,,\quad \text{for } u\in W^{1,p}(U)\,,\ A\subset U\,,
\]
provided the functions $f_k$ satisfy an estimate of the form
\[
f_k(t)\ge \alpha_k t \wedge \beta
\]
for all $t\in [0,+\infty)$, where $\beta$ is a uniform constant and $\alpha=\lim_{k\to +\infty}\alpha_k$.
\end{remark}
\begin{proposition}[Lower bound: surface contribution]\label{prop:lb-surf} 
  Let $(u_k)\subset L^0(U;\R^n)$ be a sequence  that converges to  in measure to $u\in  L^0(U;\R^n)$. Assume moreover  that $F_k(u_k)\le C$.
  Then there holds 
\begin{equation*}
	\liminf_{k\to+\infty}F_k(u_k,A)\ge \beta 	\int_{J_u\cap A}\phi_\rho(\nu_u)\dHn\quad \forall A\in \A(U)\,.
\end{equation*}
\end{proposition}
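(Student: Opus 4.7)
The approach is a slicing argument that reduces the multi-dimensional non-local functional to the one-dimensional $\Gamma$-convergence result of Theorem \ref{thm:1dim}. As a preliminary reduction, by \eqref{eq:est-f} it is enough to prove the bound with $f$ replaced by $f_i(t)=\alpha_it\wedge\beta_i$ and $\beta$ by $\beta_i$, then pass to the supremum $i\to\infty$ (using $\beta_i\nearrow\beta$). Since $f_i$ is concave and vanishes at $0$, Jensen's inequality will apply cleanly.

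Fix $\xi\in\S^{n-1}$ (later varying over a countable dense subset). Since $\rho$ is lower semicontinuous and positive on $\interior(S)$, for any $\ell<\tau(\xi)\defas \sup\{t>0:t\xi\in S\}$ one can find a thin cylinder $C=B'+[-\ell,\ell]\xi\subset S$, with $B'\subset\Pi^\xi$ a small neighborhood of the origin, on which $\rho\ge m>0$. Consequently $\rho_k\ge m\varepsilon_k^{-n}$ on $\varepsilon_k C$, and \ref{hyp:growth-W} yields $W_k(\cdot,e(u))\ge c|\partial_t u^{\xi,y}|^p$ on the $1$D slices $u_k^{\xi,y}(t)\defas\langle u_k(y+t\xi),\xi\rangle$. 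Combining these bounds with Fubini and the changes of variables $y'=\varepsilon_k y''$, $s=t-\varepsilon_k\sigma$, one obtains
\begin{equation*}
F_k(u_k,A)\ge\frac{1}{\varepsilon_k}\int_{\Pi^\xi}d\mathcal{H}^{n-1}(y)\int_{A_{\xi,y}}f_i\Bigl(cm\int_{B'}d\mathcal{H}^{n-1}(y'')\int_{t-\varepsilon_k\ell}^{t+\varepsilon_k\ell}|\partial_\sigma u_k^{\xi,y-\varepsilon_k y''}(\sigma)|^p d\sigma\Bigr)dt.
\end{equation*}
Concavity of $f_i$ and Jensen move the $y''$-integration outside $f_i$; after swapping integrals and shifting $y\mapsto z=y-\varepsilon_k y''$ (localizing on some $A'\subset\subset A$), a factor $|B'|$ appears and the inner expression becomes, for each slice, the $1$D functional of Theorem \ref{thm:1dim} (rescaled with $\tilde{\varepsilon}_k=\varepsilon_k\ell$).

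By Corollary \ref{cor:convL0}, $u_k^{\xi,z}\to u^{\xi,z}$ in measure for a.e.\ $z\in\Pi^\xi$, so Theorem \ref{thm:1dim} yields the per-slice lower bound $2\beta_i\ell\,\H^0(J_{u^{\xi,z}}\cap A'_{\xi,z})$. Fatou and the $GSBD$ slicing identity $\int_{\Pi^\xi}\H^0(J_{u^{\xi,z}}\cap A'_{\xi,z})\,d\mathcal{H}^{n-1}(z)=\int_{J_u\cap A'}|\xi\cdot\nu_u|\,d\mathcal{H}^{n-1}$ then deliver $\liminf_k F_k(u_k,A)\ge 2\beta_i\ell\int_{J_u\cap A'}|\xi\cdot\nu_u|\,d\mathcal{H}^{n-1}$. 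Letting $A'\nearrow A$, $\ell\nearrow\tau(\xi)$, $i\to\infty$, and taking the lattice-supremum of these positive measures (all absolutely continuous with respect to $\mathcal{H}^{n-1}\res J_u$) over a countable dense family of directions $\xi$, one arrives at $\liminf_k F_k(u_k,A)\ge \beta\int_{J_u\cap A}\sup_{\xi\in\S^{n-1}}2\tau(\xi)|\xi\cdot\nu_u|\,d\mathcal{H}^{n-1}$. The conclusion follows from the duality identity
\begin{equation*}
\sup_{\xi\in\S^{n-1}}\frac{2|\xi\cdot\nu|}{|\xi|_S}=2h_S(\nu)=\phi_\rho(\nu),
\end{equation*}
which holds because $|\cdot|_S$ and $h_S$ are dual norms, with equality guaranteed by the symmetry and convexity of $S$.

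The principal obstacle I anticipate is the reduction in the second paragraph: because $f_i$ is nonlinear, moving the $y''$-integration through its argument relies essentially on the concavity of $f_i$; the cylinder width $|B'|$ must be kept small, and the shift $y\mapsto y-\varepsilon_k y''$ absorbed into the base slice variable, which forces the localization on an interior subset $A'\subset\subset A$. These technicalities are the cost of extracting from the multi-dimensional convolution the sharp $1$D "kernel radius" $\tau(\xi)$ whose supremum reproduces $\phi_\rho(\nu)$ rather than some larger quantity.
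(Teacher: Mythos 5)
Your proposal follows essentially the same slicing strategy as the paper: fix a direction $\xi$, drop from $W_k$ to $c_1|\langle e(u_k)\xi,\xi\rangle|^p$, concentrate the kernel on a thin cylinder inside $S$ aligned with $\xi$, use Fubini plus Jensen on the transverse variable to reduce to the rescaled one-dimensional functional of Theorem~\ref{thm:1dim} on slices (with your $2\ell<2\tau(\xi)$ playing the role of the paper's $\mu_\xi\delta$), integrate back via the $GSBD$ slicing identity, and finally take a lattice supremum over a countable dense set of directions to recover $\phi_\rho(\nu)$ via the support-function duality. One point where your version is actually \emph{more} careful than the text: the paper's Jensen step is justified by asserting that ``$f$ is concave'', which is not among the hypotheses on $f$ (only lower semicontinuity and monotonicity are assumed); your preliminary reduction to $f_i(t)=\alpha_i t\wedge\beta_i$, which \emph{is} concave, followed by $\sup_i\beta_i=\beta$, is the correct way to run the argument. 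The localization on $A'\subset\subset A$ to absorb the $\e_k$-shift, and the final exhaustion $A'\nearrow A$, is likewise a sensible cleanup of a step the paper handles implicitly.
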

\begin{proof} 
	Let $(u_k)$ and $u$ be as in the statement, so that by Proposition \ref{prop:compactness} $u\in GSBD^p(U)$. Let $A\in\A(U)$ be fixed.
 We claim that it suffices to show that for any $\xi\in\S^{n-1}$ fixed there holds
	\begin{equation}\label{eq:liminf-xi}
		\liminf_{k\to+\infty}F_k^\xi(u_k,A)
		\ge \beta \int_{J_u^\xi\cap A}\mu_\xi|\langle\nu_u,\xi\rangle|\dHn\,,
	\end{equation}
	with 
	\begin{equation*}
		F_k^\xi(u_k,A)\defas\frac{1}{\e_k}\int_A f\Big(c_1  \e_k   |\langle e(u_k)\xi,\xi\rangle|^p  *\rho_k(x)\Big)\dx\,,
	\end{equation*}
	 $$J_u^\xi\defas\{x\in J_u\colon \langle u^+(x)-u^-(x),\xi\rangle\ne0\}\quad\text{and}\quad \mu_\xi\defas\H^1(\{x\in S\colon x=t\xi\ \text{ for }\ t\in\R\})\,.$$ 
Indeed, assume for the moment \eqref{eq:liminf-xi} holds true. Then
 \eqref{hyp:growth-W} gives
		\begin{equation*}
		W_k(x,e(u_k))\ge c_1|e(u_k)|^p\ge c_1 |\langle(e(u_k))\xi,\xi\rangle|^p \,.
	\end{equation*}
Since $f$ is nondecreasing, the above implies
	\begin{equation*}
\liminf_{k\to+\infty}	F_k(u_k,A)\ge\liminf_{k\to+\infty} F_k^\xi(u_k,A)	\ge \beta \int_{J_u^\xi\cap A}\mu_\xi|\langle\nu_u,\xi\rangle|\dHn=\beta\int_{J_u\cap A}\varphi_\xi
\dHn\,,
	\end{equation*}
with $\varphi_\xi\colon J_u\to[0,+\infty]$ given by 
\begin{equation*}
	\varphi_\xi(x)\defas \begin{cases}
		\mu_{\xi}|\langle\nu_u(x),\xi\rangle|&\text{if }x\in J_u^{\xi}\,,\\
		0&\text{otherwise}\,.
	\end{cases}
\end{equation*}
Now let $(\xi_h)\subset\S^{n-1}$ be a dense subset, in this way by \cite[Proposition 1.16]{Braides} it holds 
\begin{equation*}
\liminf_{k\to+\infty}	F_k(u_k,A)\ge \beta\int_{J_u\cap A}\sup_h\varphi_{\xi_h}
\dHn\,.
\end{equation*}
On the other hand by \cite[Lemma 4.5]{FarSciSol} we have 
	\begin{equation}\label{eq:sup-xi}
		\phi_\rho(\nu)=\sup_{\xi\in\S^{n-1}}\mu_\xi |\langle\nu,\xi\rangle| \,,
	\end{equation}
which in turn implies $\phi_\rho(\nu_u(x))=\sup_h\varphi_{\xi_h}(x)$ and the thesis follows. It remains to show \eqref{eq:liminf-xi} for which 	we will argue by slicing.  
 As the set $S$ is convex for $\delta\in(0,1)$ fixed we can find $r=r(\delta,S)$ such that the cylinder 
	\begin{equation*}
		C_\xi^{r,\delta}\defas R_\xi(Q'_r(0)\times \big(-{\mu_\xi\delta}/2,{\mu_\xi\delta}/2\big))
		\subset\subset S\,,
	\end{equation*}
	where $R_\xi\in SO(n)$ is such that $R_\xi e_n=\xi$ (see \ref{Rn}).  Let now $m_\xi\defas\min_{x\in \overline{C}_\xi^{r,\delta}}\rho(x)$ and 
	\begin{equation*}
		C_{\xi,k}^{r,\delta}(x)\defas  	\e_kC_\xi^{r,\delta}+x\,.
	\end{equation*}
Next for any $x\in A$ we denote by $\hat x_\xi$ the projection of $x$ onto  $\Pi^\xi\defas\{y\in\R^n\colon \langle y,\xi\rangle=0\}$ and
\begin{equation*}
	I_\xi\defas\{\tau\in\R\colon \hat x_\xi+\tau\xi\in A \}\,.
\end{equation*}
	Thus we have 
	\begin{equation}\label{eq:slicing1}
		\begin{split}
			F_k^\xi(u_k,A)
		&	\ge
		\frac{1}{\e_k}	\int_{A}f\biggl(\frac{c_1m_\xi}{\e_k^{n-1}}
			\int_{C_{\xi,k}^{r,\delta}(x)}
			|\langle e(u_k(z))\xi,\xi\rangle|^p  \dz
			\biggr)\dx\\
			&=\frac{1}{\e_k}\int_{\Pi^\xi}\int_{I_\xi}
			f\biggl(\frac{c_1m_\xi}{\e_k^{n-1}}
			\int_{C_{\xi,k}^{r,\delta}(\hat x_\xi+\tau\xi)}
			|\langle e(u_k(z))\xi,\xi\rangle|^p  \dz
			\biggr)	{\rm d}\tau\dHn(\hat x_\xi)\
			\,,
		\end{split}
	\end{equation}
where the last equality follows by Fubini's Theorem. Noticing that $f$ is concave and using the change of variable $z=\hat x_\xi+s\xi+\e_krz'$ with 
	$$z'\in Q'_{\xi}:=R_\xi\big(Q'\times\{0\}\big)\quad\text{and}\quad s\in \bigg(\tau-\frac{\mu_\xi\delta\e_k}2,\tau+\frac{\mu_\xi\delta\e_k}2\bigg)\,,$$
	together with Jensen's inequality yield
	\begin{equation}\label{eq:sliing2}
		\begin{split}
			f\biggl(\frac{c_1m_\xi}{\e_k^{n-1}}\int_{C_{\xi,k}^{r,\delta}(\hat x_\xi+\tau\xi)}
			|\langle e(u_k(z))\xi,\xi\rangle|^p  \dz\biggr)
			&\ge\strokedint_{Q'_{\xi}}
			\tilde f\biggl( \int_{\tau-\frac{\mu_\xi\delta\e_k}{2}}^{\tau+\frac{\mu_\xi\delta\e_k}{2}}
			|\langle e(u_k(\hat x_\xi+\e_krz'+s\xi))\xi,\xi\rangle|^p \ds\biggr)\dz'\\
			&=
			\strokedint_{Q'_{\xi}}
			\tilde f\biggl( \int_{\tau-\frac{\mu_\xi\delta\e_k}{2}}^{\tau+\frac{\mu_\xi\delta\e_k}{2}}
			\Big|\frac{\partial}{\partial s}w_{\xi,k}(\hat x_\xi, z',s)\Big|^p \ds\biggr)\dz'
		\end{split}
	\end{equation}
	with $\tilde f(t):=f\big(\frac{c_1m_\xi}{\e_k^{n-1}}t\big)$ and $w_{\xi,k}(\hat x_\xi, z',s)\defas \langle u_k(\hat x_\xi+\e_krz'+s\xi),\xi\rangle$. 
	Observe now that applying Corollary \ref{cor:convL0} and Fubini's Theorem to the functions $w_{\xi,k}(\hat x_\xi, z',s)$ we have that, for a.e. $(\hat x_\xi, z')\in \Pi_\xi\times Q'_{\xi}$ the functions $s\mapsto w_{\xi,k}(\hat x_\xi, z',s)$ converge to the section
	$u^{\hat x_\xi}(s):=u(\hat x_\xi+s\xi)\cdot \xi$ in measure on $I_\xi$.
	Further, gathering together \eqref{eq:slicing1} and \eqref{eq:sliing2}, and exchanging the order of integration it holds 
	\begin{equation}\label{eq:slicing3}
		F_k^\xi(u_k,A)
		\ge
		\int_{\Pi^\xi}\strokedint_{Q'_\xi}\bigg(\frac{1}{\e_k}\int_{ I_\xi }
		\tilde f\biggl(\int_{\tau-\frac{\mu_\xi\delta\e_k}{2}}^{\tau+\frac{\mu_\xi\delta\e_k}{2}}
		|\dot w^{\hat x_\xi, z'}_{\xi,k}(s)|^p \ds\biggr)
		{\rm d}\tau
		\bigg)  \dz'              \dHn(\hat x_\xi)\,,
	\end{equation}
	where the shortcut $w^{\hat x_\xi, z'}_{\xi,k}(s)$ denotes the function $s\mapsto w_{\xi,k}(\hat x_\xi, z',s)$ for fixed $(\hat x_\xi, z')$. By Theorem \ref{thm:1dim} we get 
	\begin{equation}\label{eq:1dimlimit}
		\liminf_{k\to+\infty}
		\frac{1}{\e_k}\int_{ I_\xi }
		\tilde f\biggl(\int_{\tau-\frac{\mu_\xi\delta\e_k}{2}}^{\tau+\frac{\mu_\xi\delta\e_k}{2}}
		|\dot w^{\hat x_\xi, z'}_{\xi,k}(s)|^p \ds\biggr)
		{\rm d}\tau 
		\ge \beta\delta \mu_\xi\#(J_{u^{\hat x_\xi}}\cap I_\xi )\,.
	\end{equation}
	Combining \eqref{eq:slicing3} with \eqref{eq:1dimlimit} we finally obtain
	\begin{equation*}
		\liminf_{k\to+\infty}	F_k^\xi(u_k,A)
		\ge\delta\beta  \int_{\Pi^\xi} \mu_\xi   \#(J_{u^{\hat x_\xi} }\cap I_\xi)   \dHn(\hat x_\xi) =
		\delta \beta \int_{J_u^\xi\cap A}\mu_\xi|\langle\nu_u,\xi\rangle|\dHn\,.
	\end{equation*}
Eventually by the arbitrariness of $\delta$ we deduce \eqref{eq:liminf-xi}.
\end{proof}
With the help of Propositions \ref{blow-up} and \ref{prop:lb-surf} we can now prove the following lower bound.
\begin{proposition}[Lower-bound]\label{prop:lower-bound}
Let $F_k$ and $F$ be as in \eqref{F} and \eqref{def:limit-F} respectively. Let  $(u_k)\subset L^0(U;\R^n)$, $u\in GSBD^p(U;\R^n)$ be such that $u_k$ converges to $u$ in measure. Then there exists a subsequence, not relabelled,  such that 
\begin{equation*}
	\liminf_{k\to+\infty}F_k(u_k)\ge  F(u)\,.
\end{equation*}
\end{proposition}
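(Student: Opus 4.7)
The plan is to combine the separate bulk and surface lower bounds provided by Propositions \ref{blow-up} and \ref{prop:lb-surf} via a standard supremum-of-mutually-singular-measures argument of De Giorgi--Letta type. First I would observe that one may assume $\liminf_{k\to+\infty} F_k(u_k) < +\infty$, otherwise the inequality is trivial. Passing to a subsequence not relabelled, the sequence $F_k(u_k)$ is uniformly bounded, so the hypotheses of both Propositions hold; extracting a further subsequence (not relabelled), one can arrange that the liminf in the statement of Proposition \ref{blow-up} is attained as a limit in the weak-$*$ sense of measures and that the conclusion of \ref{blow-up} applies. Thus for every $A\in\A(U)$ one simultaneously has
\begin{equation*}
\liminf_{k\to+\infty} F_k(u_k,A) \;\geq\; \alpha\int_A W(x,e(u))\dx
\quad\text{and}\quad
\liminf_{k\to+\infty} F_k(u_k,A) \;\geq\; \beta\int_{J_u\cap A}\phi_\rho(\nu_u)\dHn.
\end{equation*}

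Next I would introduce the set function $\lambda\colon\A(U)\to[0,+\infty]$, $\lambda(A)\defas\liminf_{k\to+\infty} F_k(u_k,A)$. Its key property is \emph{superadditivity} on disjoint open subsets of $U$, which follows from the fact that, for each fixed $k$, the map $A\mapsto F_k(u_k,A)$ is a positive Borel measure. Set
\begin{equation*}
\mu_1 \defas \alpha\,W(x,e(u))\,\L^n\mres U, \qquad
\mu_2 \defas \beta\,\phi_\rho(\nu_u)\,\H^{n-1}\mres J_u.
\end{equation*}
Because $J_u$ is countably $(n-1)$-rectifiable, hence $\L^n$-negligible, the measures $\mu_1$ and $\mu_2$ are mutually singular. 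The two lower bounds above rewrite as $\lambda(A)\geq\mu_i(A)$ for every $A\in\A(U)$ and $i=1,2$.

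The conclusion then follows from the classical lemma asserting that, for a superadditive set function $\lambda$ on open sets dominating two mutually singular finite Borel measures, one has $\lambda(A)\geq\mu_1(A)+\mu_2(A)$ for every $A\in\A(U)$. I would sketch this step as follows: fixing $\eps>0$, by mutual singularity and inner regularity one chooses disjoint compact sets $K_1,K_2\subset U$ with $\mu_i(U\setminus K_i)<\eps$; by outer regularity one picks disjoint open neighbourhoods $V_i\supset K_i$ with $V_1\cap V_2=\emptyset$; superadditivity of $\lambda$ and the hypotheses give
\begin{equation*}
\lambda(U)\;\geq\;\lambda(V_1)+\lambda(V_2)\;\geq\;\mu_1(V_1)+\mu_2(V_2)\;\geq\;\mu_1(U)+\mu_2(U)-2\eps,
\end{equation*}
and letting $\eps\to 0$ one obtains $\lambda(U)\geq\mu_1(U)+\mu_2(U)=F(u)$, which is the desired inequality. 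The main (and essentially only) obstacle is the measure-theoretic disentanglement in this last step, but this is entirely standard and the subadditivity/superadditivity framework handles it cleanly given that the real analytic work has already been done in Propositions \ref{blow-up} and \ref{prop:lb-surf}.
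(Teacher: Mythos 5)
Your proposal is correct and follows essentially the same route as the paper: after reducing to equi-bounded energies and invoking Propositions~\ref{blow-up} and~\ref{prop:lb-surf} for the separate bulk and surface lower bounds, the paper also concludes by a supremum-of-measures argument of De~Giorgi--Letta type, citing \cite[Proposition 1.16]{Braides} with densities $\varphi_1,\varphi_2$ relative to the common reference measure $\lambda=\L^n\res U+\H^{n-1}\res J_u$, which is exactly equivalent to your mutual-singularity formulation since $\varphi_1\vee\varphi_2=\varphi_1+\varphi_2$ $\lambda$-a.e.
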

\begin{proof}	Let $(u_k)$ and $u$ be as in the statement. 
		We assume without loss of generality that 
		\begin{equation*}
				\liminf_{k\to+\infty}F_k(u_k)=\lim_{k\to+\infty}F_k(u_k)<+\infty\,.
			\end{equation*}
		Thus in particular there exists $C>0$ such that 
		$
		F_k(u_k)\le C
		$ and by Proposition \ref{prop:compactness} it follows that $u\in GSBD^p(U)$.
 We define $\varphi_1,\varphi_2\colon U\to[0,+\infty)$ as follows
\begin{equation*}
\varphi_1(x)\defas\begin{cases}
\alpha W\big(x,e(u(x))\big)&\text{if }x\in U\setminus J_u\,,\\
0&\text{otherwise}\,,
\end{cases}
\end{equation*}
\begin{equation*}
	\varphi_2(x)\defas\begin{cases}
	\beta\phi_\rho(\nu_u(x))&\text{if }x\in  J_u\,,\\
		0&\text{otherwise}\,.
	\end{cases}
\end{equation*}
Moreover set $$\lambda\defas \L^n\res\Omega+\H^{n-1}\res J_u\,.$$
Now by invoking Propositions \ref{blow-up} and \ref{prop:lb-surf} we have that
\begin{equation*}
\liminf_{k\to+\infty}F_k(u_k,A)\ge \int_A\varphi_1\d\lambda\,,
\end{equation*}
and 
\begin{equation*}
	\liminf_{k\to+\infty}F_k(u_k,A)\ge \int_A\varphi_2\d\lambda\,,
\end{equation*}
for all $A\in  \A(U)$.
Then by \cite[Proposition 1.16]{Braides} we find that 
\begin{equation*}
	\liminf_{k\to+\infty}F_k(u_k,A)\ge \int_A\varphi_1\vee\varphi_2\d\lambda= F(u,A)\,,
\end{equation*}
from which in particular 
\begin{equation*}
	\liminf_{k\to+\infty}F_k(u_k)\ge \int_A\varphi_1\vee\varphi_2\d\lambda= F(u)\,.
\end{equation*}
\end{proof}
%
\section{Upper bound}\label{sec:ub}
In this section we prove the upper bound. 
\begin{proposition}\label{prop:upper-bound}
	Let $F_k$ and $F$ be as in \eqref{F} and \eqref{def:limit-F} respectively. Then for each $u\in L^0(U;\R^n)$ there is $(u_k)\subset L^0(U;\R^n)$ that converges in measure to $u$ and such that
	\begin{equation*}
		\limsup_{k\to\infty}F_k(u_k)\le F(u)\,.
	\end{equation*}
\end{proposition}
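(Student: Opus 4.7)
The plan is to combine a density argument with an explicit construction for piecewise smooth competitors. If $u \notin GSBD^p(U)$ then $F(u)=+\infty$ and the claim is trivial, so I restrict to $u \in GSBD^p(U)$. The lower bound (Proposition \ref{prop:lower-bound}) shows that $F$ is sequentially lower semicontinuous with respect to convergence in measure, so by a standard diagonal argument it suffices to construct recovery sequences for $u$ in a class dense in energy. Theorem \ref{thm:GSBD-density}, applied with the norm $\phi_\rho$, allows me to assume $u \in \mathcal{W}^\infty_{\rm pw}(U;\R^n)$: then $u$ is smooth off $J_u$ and $\overline{J_u} \subset\subset U$ is a finite union of closed smooth $(n-1)$-rectifiable pieces.

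For the bulk part, fix a parameter $r>0$ and set $V_r\defas U\setminus \{x\colon \d_S(x,J_u)\le r\}$, so that $u\in W^{1,p}(V_r;\R^n)$. Theorem \ref{thm:conv-in-sobolev} together with Remark \ref{rem:conv-in-sobolev} yields $(v_k^r)\subset W^{1,p}(V_r;\R^n)$ with $v_k^r\to u$ in $L^p(V_r;\R^n)$ and
$$
\lim_{k\to+\infty}\int_{V_r}W_k(x,e(v_k^r))\dx=\int_{V_r}W(x,e(u))\dx.
$$
Using \eqref{eq:upbound-f}, for any $\hat\alpha>\alpha$ and $k$ so large that $\e_k S\subset B_{r/2}(0)$, Fubini and the unit mass of $\rho_k$ give
$$
\frac{1}{\e_k}\int_{V_r}f\bigl(\e_kW_k(\cdot,e(v_k^r))*\rho_k(x)\bigr)\dx\le \hat\alpha\int_{V_{r/2}}W_k(x,e(v_k^r))\dx.
$$
Passing to the limit in $k$, then $\hat\alpha\searrow\alpha$ and $r\searrow 0$, produces the bulk contribution $\alpha\int_U W(x,e(u))\dx$.

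For the surface part, I would pick $\delta_k\searrow0$ with $\delta_k/\e_k\to 0$ and modify $v_k^r$ inside the tubular neighborhood $T_{\delta_k}:=\{\d_S(\cdot,J_u)<\delta_k\}$ of $J_u$. Using a tubular parametrization $(y,t)$ with $y\in J_u$ and $t$ the signed $S$-distance, I define $u_k$ to interpolate linearly between the traces $u^\pm$ of $u$ across $J_u$ on $T_{\delta_k}$ and to coincide with $v_k^r$ outside a slightly larger buffer (glued via the fundamental estimate of Remark \ref{rem:conv-in-sobolev}). Then $|e(u_k)|\sim|[u]|/\delta_k$ inside $T_{\delta_k}$, so $\e_kW_k(\cdot,e(u_k))\to+\infty$ there; after convolution, the set where $\e_kW_k(\cdot,e(u_k))*\rho_k$ exceeds any prescribed threshold is, to leading order, the enlarged tube $\widetilde T_k:=\{\d_S(\cdot,J_u)<\e_k+\delta_k\}$. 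By the smoothness of $J_u$ and the definition \eqref{def:phi} of $\phi_\rho$, one has $\L^n(\widetilde T_k)=\e_k\int_{J_u}\phi_\rho(\nu_u)\dHn+o(\e_k)$. Since $f$ saturates at $\beta$ on $\widetilde T_k$ and remains controlled by the bulk estimate outside, the non-local energy concentrated in $\widetilde T_k$ contributes exactly $\beta\int_{J_u}\phi_\rho(\nu_u)\dHn$ in the limit.

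The main obstacle I expect is the careful synchronization of the parameters $\delta_k,\e_k,r,\hat\alpha$ to ensure that the bulk estimate on $V_r$ and the tube contribution add up to $F(u)$ without double counting, that the volume $\L^n(\widetilde T_k)/\e_k$ converges precisely to $\int_{J_u}\phi_\rho(\nu_u)\dHn$ (which depends on the non-radial geometry of $S$), and that the gluing between $v_k^r$ and the interpolated function inside a thin collar produces a function in $W^{1,p}(U;\R^n)$ converging to $u$ in measure with no extra energy contribution. Once these are resolved, a diagonal extraction as $(r,\hat\alpha,\delta_k)\to (0,\alpha,0)$ delivers the recovery sequence. As pointed out in Remark \ref{rem:additional-term}, a penalty term of the form $\int_U\psi(|u|)\dx$ with $\psi$ as in Theorem \ref{thm:GSBD-density} can be incorporated without changes, using the last part of that density theorem.
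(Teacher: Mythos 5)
Your overall architecture matches the paper's (density in $GSBD^p$ via Theorem \ref{thm:GSBD-density}, recovery of the bulk term via Theorem \ref{thm:conv-in-sobolev} and Remark \ref{rem:conv-in-sobolev}, and a thin collar around $J_u$ for the surface term), and the bulk estimate, including the use of \eqref{eq:upbound-f} and Fubini, is essentially the one in the paper. The surface construction, however, is both riskier and unnecessarily complicated. The paper does not interpolate between the traces $u^\pm$ across $J_u$: it simply multiplies $v_k$ by a cutoff $(1-\varphi_k)$, with $\varphi_k \equiv 1$ on $(J_u)_{\delta_k}$ and $\varphi_k \equiv 0$ outside $(J_u)_{2\delta_k}$ (with $\delta_k \ll \e_k$), so that $u_k = v_k$ outside $(J_u)_{2\delta_k}$ and $u_k = 0$ near $J_u$. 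This keeps $u_k\in W^{1,p}$ with no need for a tubular parametrization of $J_u$, and the surface contribution is then obtained by just bounding $f\le\beta$ on the enlarged tube $(J_u)_{2\delta_k+\e_k}$: one does not need $f$ to ``saturate'' there, only the upper bound $f\le\beta$, which is automatic.

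Two concrete points where your construction has gaps. First, after the density step, $J_u$ is only a finite union of connected $(n-1)$-rectifiable sets (see \eqref{def:W_pc}); it is not smooth, so a tubular parametrization $(y,t)$ with $y\in J_u$ and $t$ the signed $S$-distance is not available, and the asserted asymptotics $|e(u_k)|\sim|[u]|/\delta_k$ is not directly justified. Second, the key volume identity $\L^n(\{\d_S(\cdot,J_u)<h\}) = h\int_{J_u}\phi_\rho(\nu_u)\dHn + o(h)$ is exactly the anisotropic outer Minkowski content theorem of Lussardi--Villa (\cite[Theorem 3.7]{Lussardi-Villa}), applied here with $h=2\delta_k+\e_k$; attributing it to ``smoothness of $J_u$ and the definition of $\phi_\rho$'' skips the actual argument, which is the crux of the surface upper bound. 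Finally, the extra parameter $r$ (working on $V_r$ and letting $r\to 0$) is unnecessary: since $J_u$ is $\L^n$-null, one can take $v_k$ directly on $U'\setminus J_u$, as in the paper, which avoids a further diagonal extraction.
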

\begin{proof}
	Without loss of generality we assume  $F(u)<C$ so that $u\in GSBD^p(U)$. Moreover by Theorem \ref{thm:GSBD-density} we can assume that $u\in \mathcal W^\infty_{\rm pw}(U;\R^n)$ and that  $J_u$ is a connected $(n-1)$-rectifiable set compactly contained in $U$.  
	We fix  $U'\in\A$ with $U\subset\subset U'$ and  consider  an extension of $u$ on $U'$, not relabelled, such that  $u\in \mathcal W^\infty_{\rm pw}(U';\R^n)$. 
	Then by Theorem \ref{thm:conv-in-sobolev} and Remark \ref{rem:conv-in-sobolev} we can find $(v_k)\subset  W^{1,p}(U'\setminus J_u;\R^n)$ such that $v_k$ converges strongly to $u$ in $L^p(U'\setminus J_u;\R^n)$
and 
\begin{equation}\label{eq:recovery}
	\lim_{k\to\infty}E_k(v_k,U'\setminus J_u)= E(u,U'\setminus J_u)=\int_{U'}W(x, e(u))\dx\,.
\end{equation}
where the last equality clearly holds as $J_u$ is a null set. For every $h>0$ we set
\begin{equation*}
	(J_u)_h\defas\{x\in U\colon {\rm d}_S(x,J_u)<h\}\,,
\end{equation*}
so that for $h$ small enough $(J_u)_h\subset\subset U$. 
Fix now $0<\delta_k<<\e_k$ and take $\varphi_k\in C^\infty_c(U')$  cutoff between $(J_u)_{\delta_k}$ and $(J_u)_{2\delta_k}$. Next define $(u_k)\subset W^{1,p}(U';\R^n)$ as 
\begin{equation*}
u_k\defas v_k(1-\varphi_k)\to u\quad\text{strongly in }L^p(U'\setminus J_u;\R^n)\,,
\end{equation*} and in particular $u_k\to u$ in measure on $U'$.
 Then using that $u_k=v_k$ in $U'\setminus(J_u)_{2\delta_k}$ we have
 \begin{equation}\label{eq:limsup}
 	\begin{split}
F_k(u_k)\le F_k(v_k, U\setminus J_u)+ \beta\frac{\L^n((J_u)_{2\delta_k+\e_k})}{\e_k}\,.
 	\end{split}
 \end{equation}
Now invoking  \cite[Theorem 3.7]{Lussardi-Villa} we have
\begin{equation}\label{eq:limsup1}
	\lim_{k\to\infty}\frac{\L^n((J_u)_{2\delta_k+\e_k})}{\e_k}=\int_{J_u}\phi_\rho(\nu_u)\dHn\,.
\end{equation}
Moreover as  $f$ is increasing and satisfies \eqref{hyp:f} then $f(t)\le \hat\alpha t$ for all  $\hat\alpha>\alpha$. This together with the change of variable $y=x-\e_kz$, Fubini's theorem, and the change of variable $\hat x=x-\e_k z$ yield
 \begin{equation*}
 	\begin{split}
 	 F_k(v_k, U\setminus J_u)
 	 &\le \hat\alpha \int_{U\setminus J_u}\int_{\R^n}W_k(y,e(v_k))\rho_k(x-y)\dy\dx\\
 &	 = \hat{\alpha}\int_{U}\int_{\R^n}W_k\big(x-\e_kz, e(v_k(x-\e_k\cdot))\big)\rho(z)\dz\dx\\
 &= \hat{\alpha}\int_{\R^n}\rho(z)\int_{U} W_k\big(x-\e_kz, e(v_k(x-\e_k\cdot))\big)\dx\dz\\
 &\le\hat\alpha \int_{U'}W_k(x,e(v_k))\dx=E_k(v_k,U'\setminus J_u)\,.
 	\end{split}
 \end{equation*}
 Hence passing to the limit in $k$ in the above inequality and using  \eqref{eq:recovery} we get
 \begin{equation}\label{eq:limsup2}
\limsup_{k\to\infty}F_k(v_k, {U\setminus J_u})\le \hat\alpha E(u,U'\setminus J_u)=\hat\alpha\int_{U'}W(x, e(u))\dx\,,
 \end{equation}
 for all $\hat\alpha>\alpha$. Finally gathering together \eqref{eq:limsup}-\eqref{eq:limsup2} we obtain
 \begin{equation*}
\limsup_{k\to\infty}F_k(u_k)\le \hat{\alpha}\int_{U'}W(x, e(u))\dx+\beta \int_{J_u}\phi_\rho(\nu_u)\dHn\,.
 \end{equation*}
 Eventually by the arbitrariness of $U'$ and  $\hat\alpha$ we conclude. 
\end{proof}
\begin{remark}\label{rem:ub}
	If a lower order term $\int_U\psi(|u|)\dx$, is added to the energy, the density argument above can still be applied if $\psi$ complies with the assumptions of Theorem \ref{thm:GSBD-density}. Also observe that within the same assumptions, $\int_U\psi(|u_k|)\dx$ is equiintegrable whenever $(u_k)$ is converging in $L^p$. For $u_k$ and $u$ as in the proof above, this entails the convergence $\int_U\psi(|u_k|)\dx\to \int_U\psi(|u|)\dx$. 
\end{remark}
We are now in a position to prove Theorem \ref{thm:main-theorem}.
\begin{proof}[Proof of Theorem \ref{thm:main-theorem}]
Point $(i)$ follows by combining Propositions \ref{prop:lower-bound} and \ref{prop:upper-bound}, while $(ii)$ is a consequence of Proposition \ref{prop:compactness}.
\end{proof}
\section{Stochastic homogenisation}\label{sec:stoch-hom}
In this section we are concerned with the $\Gamma$-convergence analysis of the functionals $F_k$ when $W_k$ are random integrands of type
\begin{equation*}
W_k(\omega,y,M)=W\Big(\omega,\frac{y}{\delta_k},M\Big)\,,
\end{equation*}
with  $\omega$ belonging to the sample space $\O$ of a complete probability space $(\O,\T,P)$ and $\delta_k\searrow0$.
In order to do that we first give some definitions.
\begin{definition}[Group of $P$-preserving transformations]\label{def:group-P-pres-transf}
A group of $P$-preserving transformations on $(\O,\T,P)$ is a family $(\tau_z)_{z\in\Z^n}$ of mappings $\tau_z\colon\O\to\O$ satisfying the following:
\begin{enumerate}[label*=$(\alph*)$]
	\item (measurability) $\tau_z$ is $\T$-measurable for every $z\in\Z^n$;
	\item (invariance) $P(\tau_z(E))=P(E)$, for every $E\in \T$ and every $z\in\Z^n$;
	\item (group property) $\tau_0={\rm id}_\O$ and $\tau_{z+z'}=\tau_z\circ\tau_{z'}$ for every $z,z'\in\Z^n$.
\end{enumerate}
if in addition, every $(\tau_z)_{z\in\Z^n}$-invariant set (that is, every $E\in\T$ with $\tau_z(E)=E$ for every $z\in\Z^n$) has probability 0 or 1, then $(\tau_z)_{z\in\Z^n}$ is called ergodic.
\end{definition}
Let $a\defas(a_1,\dots,a_n),\,b\defas(b_1,\dots,b_n)\in\Z^n$ with $a_i<b_i$ for all $i\in\{1,\ldots,n\}$; we define the $n$-dimensional interval 
\begin{equation*}
	[a,b):=\{x\in\Z^n\colon a_i\le x_i<b_i\,\,\text{for}\,\,i=1,\ldots,n\}
\end{equation*}
and we set
\begin{equation*}
	\I_n:=\{[a,b)\colon a,b\in\Z^n\,,\, a_i<b_i\,\,\text{for}\,\,i=1,\ldots,n\}\,.
\end{equation*}
\begin{definition}[Subadditive process]\label{def:subadditive-proc}
	A discrete subadditive process with respect to a group $(\tau_z)_{z\in\Z^n}$ of $P$-preserving transformations on $(\O,\T,P)$ is a function $\mu\colon\O\times \I_n\to\R$ satisfying the following:
		\begin{enumerate}[label=$(\alph*)$]
		\item\label{subad:meas} (measurability) for every $A\in\I_n$ the function $\omega\mapsto\mu(\omega,A)$ is $\T$-measurable;
		\item\label{subad:cov} (covariance) for every $\omega\in\Omega$, $A\in\I_n$, and $z\in\Z^n$ we have $\mu(\omega,A+z)=\mu(\tau_z(\omega),A)$;
		\item\label{subad:sub} (subadditivity) for every $A\in\I_n$ and for every finite family $(A_i)_{i\in I}\subset\I_n$ of pairwise disjoint sets such that $A=\cup_{i\in I}A_i$, we have
		\begin{equation*}
			\mu(\omega,A)\le\sum_{i\in I}\mu(\omega,A_i)\quad\text{for every}\,\,\omega\in\Omega\,;
		\end{equation*}
		\item\label{subad:bound} (boundedness) there exists $c>0$ such that $0\le\mu(\omega,A)\le c\L^n(A)$ for every $\omega\in\Omega$ and $A\in\I_n$.
	\end{enumerate}
\end{definition}
\begin{definition}[Stationarity] \label{def:stationarity}
	Let $(\tau_z)_{z\in\Z^n}$ be a group of $P$-preserving transformations on $(\Omega,\T,P)$. 
	We  say that $W\colon\Omega\x\R^n\x\mathbb{M}^{n\times n}\to[0,+\infty)$ is \textit{stationary} with respect to $(\tau_z)_{z\in\Z^n}$ if
	\begin{equation*}
		W(\omega,x+z,M)=W(\tau_z(\omega),x,M)
	\end{equation*}
	for every $\omega\in\Omega$, $x\in\R^n$, $z\in\Z^n$ and $M\in\mathbb{M}^{n\times n}$. 	Moreover we say that a stationary random integrand $W$ is \textit{ergodic} if $(\tau_z)_{z\in\Z^n}$ is ergodic.
\end{definition}
For our purposes we consider random integrands $W\colon\Omega\x\R^n\x\mathbb{M}^{n\times n}\to[0,+\infty)$  satisfying the following assumptions:
\begin{enumerate}[label*=$(w\arabic*)$]
\item\label{hyp:w1} $W$ is $(\T\otimes\mathcal B^n\otimes \mathcal B^{n\times n})$-measurable;
\item\label{hyp:w2} $W(\omega,\cdot,\cdot)\in\mathcal W$ for every $\omega\in\O$;
\item \label{hyp:w3} the map $M\mapsto W(\omega,x,M)$ is lower semicontinuous for every $\omega\in\O$ and every $x\in\R^n$. 
\end{enumerate}
Let $W$ be a random integrand satisfying \ref{hyp:w1}-\ref{hyp:w3} and $\delta_k\searrow0$. We consider  the family of functionals $F_k(\omega)\colon L^0(U;\R^n)\to[0,+\infty]$ defined as
\begin{equation}\label{def:F_k_random}
	F_k(\omega)(u)\defas\frac1{\e_k}\int_Uf\bigg(\e_kW\Big(\omega,\frac{\cdot}{\delta_k},e(u)\Big)*\rho_k(x)\bigg)\dx\,,
\end{equation}
if $u\in W^{1,p}(U;\R^n)$, and extended to $+\infty$ otherwise. 
Let also for $\omega\in\O$ and $A\in\A$
\begin{equation}\label{min-prob-random}
	\m_\omega(u_M,A)\defas\inf\left\{\int_AW(\omega,x,e(v))\dx\colon v\in W^{1,p}(A;\R^n),\ v=u\ \text{near}\ \partial A
	\right\}\,.
\end{equation}
We now state the main theorem of this section. 
\begin{theorem}[Stochastic homogenisation]\label{thm:stoch-hom}
Let $W$ be a random integrand satisfying \ref{hyp:w1}-\ref{hyp:w3}. Assume moreover $W$ is stationary with respect to a group $(\tau_z)_{z\in\Z^n}$ of $P$-preserving transformations on $(\O,\T,P)$. For every $\omega\in\O$ let $F_k(\omega)$ be as in \eqref{def:F_k_random} and $\m_\omega$ be as in \eqref{min-prob-random}. Then there exists $\O'\in\T$, with $P(\O')=1$ such that for every $\omega\in\O'$, $x\in\R^n$, $M\in\mathbb{M}^{n\times n}$ the limit
\begin{equation}\label{eq:lim-stoch-hom}
	\lim_{t\to+\infty}\frac{\m_\omega(u_M,Q_t(tx))}{t^n}=	\lim_{t\to+\infty}\frac{\m_\omega(u_M,Q_t(0))}{t^n}=:W_{\rm hom}(\omega,M)
\end{equation}
exists and is independent of $x$. The function $W_{\rm hom}\colon\O\times \mathbb{M}^{n\times n}\to[0,+\infty)$ is $(\T\otimes \mathcal B^{n\times n})$-measurable. Moreover, for every $\omega\in\O'$ the functionals $F_k(\omega)$ $\Gamma$-converge in measure to the functional $F_{\rm hom}(\omega)\colon L^0(U;\R^n)\to[0,+\infty]$ given by 
\begin{equation*}
	F_{\rm hom}(\omega)(u)\defas\begin{cases}
		\displaystyle\	\alpha\int_U W_{\rm hom}(\omega,e(u))\dx+\beta \int_{J_u}\phi_\rho(\nu_u)\dHn&\text{if }u\in GSBD^p(U)\,,\\
		+\infty&\text{otherwise}\,.
	\end{cases}
\end{equation*}
If, in addition, $W$ is ergodic, then $W_{\rm hom}$ is independent of $\omega$ and 
\begin{equation}\label{eq:lim-stoch-hom2}
	W_{\rm hom}(M)=\lim_{t\to+\infty}\frac{1}{t^n}\int_\O\m_\omega(u_M,Q_t(0))\dP(\omega)\,,
\end{equation}
and thus $F_{\rm hom}$ is deterministic.
\end{theorem}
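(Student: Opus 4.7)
The plan is to reduce the statement to the deterministic homogenisation result Theorem \ref{thm:det-hom} by verifying the existence of the cell-formula limit almost surely, using the Subadditive Ergodic Theorem of Akcoglu--Krengel \cite{AK81} in the spirit of \cite{DMM}. For every $M\in\mathbb{M}^{n\times n}$ I would introduce the set function
\begin{equation*}
\mu_M(\omega,A)\defas \m_\omega(u_M,A)\,,\qquad A\in\I_n\,,\ \omega\in\O\,,
\end{equation*}
with $\m_\omega$ as in \eqref{min-prob-random}, and show that $\mu_M$ is a subadditive stochastic process in the sense of Definition \ref{def:subadditive-proc}. Measurability of $\omega\mapsto \mu_M(\omega,A)$ follows from \ref{hyp:w1} together with a standard projection/selection argument on the Polish space of Sobolev competitors. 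Boundedness is a direct consequence of the upper growth bound in \ref{hyp:growth-W} by testing with $u_M$ itself. Subadditivity is obtained by gluing, on a disjoint union $A=\bigcup_i A_i$, admissible competitors $v_i$ for $\mu_M(\omega,A_i)$ (each equal to $u_M$ near $\partial A_i$) into a single competitor on $A$; the ``near $\partial A$'' boundary requirement in \eqref{min-prob-random} is precisely what makes this pasting admissible. Covariance $\mu_M(\omega,A+z)=\mu_M(\tau_z\omega,A)$ follows by the change of variables $y\mapsto y-z$ combined with the stationarity of $W$ in the sense of Definition \ref{def:stationarity}.

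Having established these properties, the multiparameter Akcoglu--Krengel theorem applied to cubes $Q_t(0)$ (with $t\to+\infty$ through integers, then filled in by monotonicity/boundedness) gives, for each fixed $M$, a set $\O_M\in\T$ with $P(\O_M)=1$ on which $t^{-n}\mu_M(\omega,Q_t(0))$ converges to a $\T$-measurable limit $W_{\rm hom}(\omega,M)$, invariant under all $\tau_z$. To handle the cubes $Q_t(tx)$ centred at $tx\neq 0$, I would write $Q_t(tx)=Q_t(0)+\lfloor tx\rfloor+o(t)$, use the covariance relation to replace $\omega$ by $\tau_{\lfloor tx\rfloor}\omega$, and exploit the $\tau_z$-invariance of the limit together with the boundedness $0\le\mu_M\le c\mathcal L^n$ and the sublinearity of boundary corrections to conclude that the limit is the same as at $x=0$.

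The next step is to remove the dependence of the exceptional null set on $M$. The map $M\mapsto \mu_M(\omega,A)/\mathcal L^n(A)$ is locally uniformly continuous on $\mathbb{M}^{n\times n}$: indeed, if $v$ is admissible for $\mu_{M_1}(\omega,A)$ then $v+u_{M_2-M_1}$ is admissible for $\mu_{M_2}(\omega,A)$, and \ref{hyp:growth-W} together with convexity/elementary inequalities yields a uniform (in $\omega$, $A$) modulus of continuity of the form $C(1+|M_1|^{p-1}+|M_2|^{p-1})|M_1-M_2|$. Choosing a countable dense set $\mathcal D\subset\mathbb{M}^{n\times n}$ and setting $\O'\defas\bigcap_{M\in\mathcal D}\O_M$ (still of full measure), this equicontinuity lets one extend the convergence to every $M\in\mathbb{M}^{n\times n}$ for every $\omega\in\O'$, and simultaneously gives $(\T\otimes\mathcal B^{n\times n})$-measurability of $W_{\rm hom}$. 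Having \eqref{eq:lim-stoch-hom} in hand, Theorem \ref{thm:det-hom} applied pointwise in $\omega\in\O'$ delivers the asserted $\Gamma$-convergence of $F_k(\omega)$ to $F_{\rm hom}(\omega)$.

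For the ergodic statement, the $\tau_z$-invariance of $\omega\mapsto W_{\rm hom}(\omega,M)$ together with ergodicity forces this function to be $P$-a.s.\ constant, for each $M\in\mathcal D$, hence (by continuity) for all $M$. Formula \eqref{eq:lim-stoch-hom2} then follows by integrating the almost sure convergence over $\O$ and invoking dominated convergence via the uniform bound $\m_\omega(u_M,Q_t(0))\le c_2(|{\rm sym}(M)|^p+1)t^n$. The main technical obstacle I expect is the careful verification of covariance and subadditivity in the presence of the ``near $\partial A$'' boundary condition (needed to paste competitors without creating extra surface terms) and the treatment of cubes not centred at integer points when proving $x$-independence; the rest is a routine application of the machinery summarised above.
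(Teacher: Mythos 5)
Your high-level plan coincides with the paper's: show that $\mu_M(\omega, A) \defas \m_\omega(u_M, A)$ defines a subadditive process, apply the Akcoglu--Krengel theorem for each $M$ in a countable dense set, intersect the full-measure sets, extend to all $M$, and then invoke Theorem~\ref{thm:det-hom} pointwise in $\omega$; ergodicity is handled the same way. The verification of the subadditive-process axioms, the treatment of shifted cubes, and the ergodic step are all in the right spirit (the paper actually takes the cube-independence directly from the version of the ergodic theorem applied to all cubes $t\tilde Q$, so your manual shift argument is redundant but workable).

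The genuine gap is in the extension from the countable dense set $\mathcal D$ to all of $\mathbb{M}^{n\times n}$. You claim that the competitor shift $v \mapsto v + u_{M_2 - M_1}$ together with \ref{hyp:growth-W} and ``convexity/elementary inequalities'' yields a modulus of continuity of the form $C(1+|M_1|^{p-1}+|M_2|^{p-1})|M_1-M_2|$ for $\mu_M(\omega,A)/\L^n(A)$, uniformly in $(\omega, A)$. This would require $W(\omega,x,\cdot)$ to be convex, or at least locally Lipschitz with a growth-compatible constant, in its last argument. Neither is among \ref{hyp:w1}--\ref{hyp:w3}: assumption \ref{hyp:conv+lsc-W} (Carath\'eodory) gives continuity of $W(\omega,x,\cdot)$ for a.e.\ fixed $(\omega,x)$, but no uniform modulus, and the growth bound \ref{hyp:growth-W} alone does not let you compare $W(\omega,x,e(v)+(M_2-M_1)_{\rm sym})$ with $W(\omega,x,e(v))$. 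The competitor shift is exact as far as admissibility is concerned, but it changes the integrand's argument by a fixed translation, and without Lipschitz-in-$M$ control on $W$ the two energies need not be close.

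The paper's proof avoids this by a different interpolation. Given $M_1\in\Q^{n\times n}$ and $M_2\in\mathbb{M}^{n\times n}$, it glues a near-minimizer for $\m_\omega(u_{M_2}, t\tilde Q)$ to $u_{M_1}$ across the annulus $(1+\eta)t\tilde Q\setminus t\tilde Q$ via a cutoff, and bounds the annulus contribution solely through \ref{hyp:growth-W}; the extra cost is $C\bigl(|M_1+M_1^T|^p+|M_2+M_2^T|^p+\eta^{-p}|M_1-M_2|^p+1\bigr)\bigl((1+\eta)^n-1\bigr)t^n$. Dividing by $t^n$, letting $M_1\to M_2$ along $\Q^{n\times n}$ at fixed $\eta$ (so that the $\eta^{-p}|M_1-M_2|^p$ term vanishes), and finally sending $\eta\to 0$ shows that $\limsup_t$ and $\liminf_t$ agree for $M_2$. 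You would need to replace your direct equicontinuity estimate with this annulus-cutoff argument (or add convexity/Lipschitz-in-$M$ to the hypotheses, which the paper does not do).
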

The rest of this section is devoted to prove Theorem \ref{thm:stoch-hom}. This will be done in a number of steps. 
\begin{proposition}\label{prop:subadditive-proc} Let $W$ be a stationary random integrand satisfying \ref{hyp:w1}-\ref{hyp:w3} and let $\m_\omega$ be as in \eqref{min-prob-random}.  Then for every $M\in\mathbb{M}^{n\times n}$ the function $\mu_M\colon\O\times\I_n\to\R$ given by 
$
	\mu_M(\omega,A)\defas \m_\omega(u_M,A)
$
defines a subadditive process on $(\O,\T,P)$.
\end{proposition}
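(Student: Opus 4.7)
The plan is to verify one by one the four defining properties of Definition~\ref{def:subadditive-proc}. The boundedness, covariance and subadditivity clauses are essentially routine consequences of the structure of \eqref{min-prob-random} and of the stationarity of $W$; the measurability clause is the delicate one and rests on reducing the infimum to a countable family of competitors. For \textit{boundedness}, testing \eqref{min-prob-random} with $v=u_M$ gives, via \ref{hyp:growth-W},
\begin{equation*}
0\le\mu_M(\omega,A)\le\int_A W(\omega,x,\mathrm{sym}(M))\dx\le c_2(|2\,\mathrm{sym}(M)|^p+1)\L^n(A),
\end{equation*}
so the required bound holds with a constant depending only on $M$.

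For \textit{covariance}, fix $\omega$ and $z$ and observe that if $v$ is admissible in $\m_\omega(u_M,A+z)$, then the translate $\tilde v(x)\defas v(x+z)-Mz$ belongs to $W^{1,p}(A;\R^n)$, equals $u_M$ near $\partial A$, and satisfies $e(\tilde v)(x)=e(v)(x+z)$. Stationarity of $W$ together with the change of variable $y=x+z$ yield
\begin{equation*}
\int_A W(\tau_z\omega,x,e(\tilde v))\dx=\int_{A+z}W(\omega,y,e(v))\dy,
\end{equation*}
and passing to the infimum over $v$ (noting that $v\leftrightarrow\tilde v$ is a bijection of the admissible classes) produces $\mu_M(\tau_z\omega,A)=\mu_M(\omega,A+z)$. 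For \textit{subadditivity}, given a disjoint $\I_n$-partition $A=\cup_{i\in I}A_i$ and $\eta>0$, I would select near-minimisers $v_i$ with energy at most $\m_\omega(u_M,A_i)+\eta/|I|$; since each $v_i$ equals $u_M$ in a neighbourhood of $\partial A_i$, the pointwise gluing $v=v_i$ on $A_i$ gives $v\in W^{1,p}(A;\R^n)$ with $v=u_M$ near $\partial A$. Additivity of the integral and sending $\eta\to 0$ then yield the subadditive inequality.

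The main obstacle is \textit{measurability}. For fixed $\omega$ the functional $v\mapsto\int_A W(\omega,x,e(v))\dx$ is continuous on $W^{1,p}(A;\R^n)$: indeed $e(v_n)\to e(v)$ in $L^p$, the Carathéodory property \ref{hyp:conv+lsc-W} forces pointwise convergence of the integrands along a subsequence, and the growth bound \ref{hyp:growth-W} provides the equi-integrable majorants needed for Vitali's theorem (the usual subsequence argument then upgrades this to convergence of the full sequence). Choosing any countable family $\mathcal D$ of admissible competitors dense in the $W^{1,p}$-topology, for instance $u_M$ plus a countable $W^{1,p}$-dense subset of $C^\infty_c(A;\R^n)$, continuity gives
\begin{equation*}
\mu_M(\omega,A)=\inf_{v\in\mathcal D}\int_A W(\omega,x,e(v))\dx.
\end{equation*}
By \ref{hyp:w1} the map $(\omega,x)\mapsto W(\omega,x,e(v)(x))$ is jointly $\T\otimes\mathcal B^n$-measurable, so Fubini makes $\omega\mapsto\int_A W(\omega,x,e(v))\dx$ $\T$-measurable for each $v\in\mathcal D$; being a countable infimum of $\T$-measurable functions, $\mu_M(\cdot,A)$ is $\T$-measurable.
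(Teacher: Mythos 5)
Your argument is correct, and the routine clauses (boundedness, covariance, subadditivity) match the paper's Steps 2--4 essentially verbatim; the affine correction $\tilde v(x)=v(x+z)-Mz$ in the covariance step is a slightly cleaner way to phrase what the paper writes, and it changes nothing since a constant shift does not affect the symmetric gradient. Where you genuinely depart from the paper is measurability. The paper uses the Moreau--Yosida regularisations $W^N$ (Lipschitz in $M$) and invokes \cite[Lemmas C.1--C.2]{RufRuf}: first to establish joint $\T\otimes\mathcal B(W^{1,p}(A))$-measurability of the approximated energies, then (using \ref{hyp:w3} to pass $N\to\infty$ monotonically) of the non-regularised one, and finally a measurable-selection--type result to deduce measurability of the infimum. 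You instead exploit the Carath\'eodory regularity of $W(\omega,\cdot,\cdot)$ and the growth bound directly: Vitali's theorem gives $W^{1,p}$-continuity of $v\mapsto\int_A W(\omega,x,e(v))\dx$ for each fixed $\omega$, so the infimum may be computed over a countable family $\mathcal D$ of competitors (density of $u_M+C^\infty_c(A;\R^n)$ in the admissible class, by compactly supported mollification, makes this legitimate), and \ref{hyp:w1} plus Tonelli make each $\omega\mapsto\int_AW(\omega,x,e(v))\dx$ measurable, hence so is the countable infimum. Your route is more elementary and self-contained, and in fact bypasses assumption \ref{hyp:w3} entirely, which the paper needs only to guarantee $W^N\nearrow W$ pointwise; the trade-off is that you do not obtain the joint measurability in $(\omega,u)$ that the paper's approach yields as a by-product.
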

\begin{proof} Let $M\in\mathbb{M}^{n\times s}$ be fixed.  Then we need to show that $\mu_M$  satisfies properties \ref{subad:meas}--\ref{subad:bound}.\\	
\textit{Step 1: measurability}. 
Let $A\in \I_n$ be fixed. For $N\in\N$ let
\begin{equation*}
W^N(\omega,x,M)\defas \inf_{\xi\in\mathbb{M}^{n\times n}}\{W(\omega,x,\xi)+N|\xi-M|\}
\end{equation*}
be the Moreau-Yosida regularisation of $M\mapsto W(\omega,x,M)$ which is $N$-Lipschitz. Let also $F^N(\omega)\colon W^{1,p}(A)\to[0,+\infty)$ be defined as
\begin{equation*}
F^N(\omega)(u)\defas\int_AW^N(\omega,x,e(u))\dx\,.
\end{equation*}
Arguing as in the proof of \cite[Lemma C.1.]{RufRuf} it can be shown that $(\omega,u)\mapsto F^N(\omega)(u)$ is $\T\otimes \mathcal{B}(W^{1,p}(A))$-measurable. By \ref{hyp:w3} $W^N\nearrow W$ pointwise, and in particular $F^N(\omega)(u)$ converges to $\int_AW(\omega,x,e(u))\dx$ pointwise. As a consequence $(\omega,u)\mapsto \int_AW(\omega,x,e(u))\dx$ is also $\T\otimes \mathcal{B}(W^{1,p}(A))$-measurable.  Now we note that $F(\omega)(u_M)<+\infty$. This together with \ref{hyp:w3} and \cite[Lemma C.2.]{RufRuf} imply that $\omega\mapsto\mu_M(u_M,A)$ is $\T$-measurable.
\\
 \textit{Step 2: covariance}. Let $\omega\in\O$, $A\in\I_n$ and $z\in\Z^n$ be fixed. Let $v\in W^{1,p}(A+z)$. Then $v=u_M$ near $\partial (A+z)$ if and only if $v(\cdot+z)=u_M$ near $\partial A$; moreover the stationarity of $W$ yields
\begin{equation*}
\int_{A+z}W(\omega,x,e(v))\dx=\int_{A}W(\omega,x+z,e(v(\cdot+z)))\dx=\int_A W(\tau_z(\omega),x,e(v(\cdot+z)))\dx\,.
\end{equation*}
Hence we have that 
\begin{equation*}
\mu_M(\omega,A+z)=\mu_M(\tau_z(\omega) ,A)\,.
\end{equation*}
 \textit{Step 3: subadditivity}. Let $\omega\in\O$ and $A\in\I_n$ be fixed. Let $\{A_1,...,A_N\}\subset\I_n$ be pairwise disjoint such that $A=\cup_{i=1}^NA_i$.
For each $i\in\{1,...,N\}$ we take $v_i\in W^{1,p}(A_i)$ with $v_i=u_M$ near $\partial A_i$. Let $v\in W^{1,p}(A)$ be given by $v:=v_i$ in $A_i$. Then clearly $v=u_M$ near $\partial A$ and 
\begin{equation*}
	\mu_M(\omega,A)\le \int_AW(\omega,x,e(v))\dx=\sum_{i=1}^N\int_{A_i}W(\omega,x,e(v_i))\dx\,.
\end{equation*}
By the arbitrariness of $v_i$ we then conclude 
\begin{equation*}
	\mu_M(\omega,A)
	\le \sum_{i=1}^N	\mu_M(\omega,A_i)
	\,.
\end{equation*}
\textit{Step 4: boundedness}. Let $\omega\in\O$ and $A\in\I_n$ be fixed. 
Then by \ref{hyp:growth-W} we get
\begin{equation*}
\mu_M(\omega,A)\le \int_AW(\omega,x,e(u_M))\dx\le c_2(|M+M^T|^p+1)\L^n(A)\,.
\end{equation*}
\end{proof}
With the help of Proposition \ref{prop:subadditive-proc} we can now prove the main result of this section. 
\begin{proof}[Proof of Theorem \ref{thm:stoch-hom}]
	By Theorem \ref{thm:det-hom} the $\Gamma$-convergence of $F_k(\omega)$ follows if we  show that there exists a set $\O'\in\T$ of full probability such that \eqref{eq:lim-stoch-hom} holds true for all $\omega\in \O'$.\\
	Let $M\in\mathbb{M}^{n\times n}$ be fixed.  By Proposition \ref{prop:subadditive-proc} and the Subadditive Ergodic Theorem \cite[Theorem 2.4]{AK81} we deduce the existence of $\O_M\in\T$ with $P(\O_M)=1$ and of a $\T$-measurable function $\phi_M\colon\O\to[0,+\infty)$ such that
	\begin{equation*}
	\phi_M(\omega)=\lim_{t\to+\infty}\frac{\m_\omega(u_M,t\tilde Q)}{|t\tilde Q|}\,,
	\end{equation*}
for all $\omega\in\O_M$ and all cubes $\tilde Q\subset\R^n$. We set
\begin{equation}\label{def:Omega}
	\O'\defas\bigcap_{M\in\Q^{n\times n}}\O_M\,,
\end{equation}
which satisfies $P(\O')=1$, and let $W_{\rm hom}\colon\O\times\mathbb{M}^{n\times n}\to[0,+\infty)$ be given by
\begin{equation*}
	W_{\rm hom}(\omega,M)\defas\limsup_{t\to+\infty}
	\frac{\m_\omega(u_M, Q_t(0))}{t^n}\,.
\end{equation*}
Clearly  $W_{\rm hom}(\omega,M)=\phi_M(\omega)$ for all $\omega\in\O'$, $M\in\Q^{n\times n}$. 
Let now $\omega\in \O'$, $t>0$, $\eta\in (0,1)$ and $\tilde Q$ (cube centred at $x$) be fixed. Choose $M_1\in\Q^{n\times n}$ and $M_2\in\mathbb{M}^{n\times n}$. Choose also $u\in W^{1,p}(t\tilde Q;\R^n)$ with  $u=u_{M_2}$ near $\partial (t\tilde Q)$ and
\begin{equation*}
\int_{t\tilde Q}W(\omega,x,e(u))\dx\le \m_\omega(u_{M_2},t\tilde Q)+\eta\,.
\end{equation*}
We extend $u$ with $u_{M_2}$ in $\R^n\setminus A$ without relabelling it. Next we take $\varphi_\eta\in C^\infty_c(\R^n,[0,1])$ with $\varphi_\eta\equiv 1$ on $t\tilde Q$, $\varphi_\eta\equiv0$ on $\R^n\setminus (1+\eta)t\tilde Q$ and $\|\nabla \varphi_\eta\|_{L^\infty(\R^n)}\le C/\eta$, and set
\begin{equation*}
v:=u\varphi_\eta+u_{M_1}(1-\varphi_\eta)\,,
\end{equation*}
which is admissible in the definition of $\m_\omega(u_{M_1}, (1+\eta)t\tilde Q)$.
By \ref{hyp:growth-W} it holds 
\begin{equation}\label{eq:stoch1}
	\begin{split}
	 \int_{(1+\eta)t\tilde Q}W(\omega,x,e(v))\dx
		\le \int_{t\tilde Q}W(\omega,x,e(u))\dx
		+ {c_2}\int_{(1+\eta)t\tilde Q\setminus t\tilde Q}(|\nabla v+\nabla v^T|^p+1)\dx\,.
	\end{split}
\end{equation}
As $v=u_{M_2}\varphi_\eta+u_{M_1}(1-\varphi_\eta)$ in $(1+\eta)t\tilde Q\setminus t\tilde Q$ we get
\begin{equation}\label{eq:stoch2}
	\begin{split}
|\nabla v+\nabla v^T|^p&\le  C\Big(|M_1+M_1^T|^p  + |M_2+M_2^T|^p+\frac1{\eta^p}|{M_1}-{M_2}|^p|x|^p \Big)\\
&\le  C\Big(|M_1+M_1^T|^p  + |M_2+M_2^T|^p+\frac1{\eta^p}|{M_1}-{M_2}|^p(1+\eta)^p \Big)\,.
	\end{split}
\end{equation}
Combining \eqref{eq:stoch1} with \eqref{eq:stoch2} we obtain
\begin{equation*}
	\begin{split}
\m_\omega(u_{M_1},& (1+\eta)t\tilde Q)\le \m_\omega(u_{M_2},t\tilde Q)\\&+ C\Big(|M_1+M_1^T|^p  + |M_2+M_2^T|^p+\frac1{\eta^p}|{M_1}-{M_2}|^p+1 \Big)((1+\eta)^n-1)t^n|\tilde Q|\,.
	\end{split}
\end{equation*}
Rescaling by $(1+\eta)^nt^n$ and passing to the limit as $t\to+\infty$ in the above inequality we find
\begin{equation}\label{eq:stoch3}
	\begin{split}
		W_{\rm hom}(\omega,M_1)&\le \frac{1}{(1+\eta)^n}\liminf_{t\to+\infty}\frac{\m_\omega(u_{M_2}t\tilde Q)}{t^n|\tilde Q|}\\
		&+C\Big(|M_1+M_1^T|^p  + |M_2+M_2^T|^p+\frac1{\eta^p}|{M_1}-{M_2}|^p+1 \Big)\frac{((1+\eta)^n-1)}{(1+\eta)^n}\,.
	\end{split}
\end{equation}
By inverting the role of $M_1$ and $M_2$ we can similarly find
\begin{equation}\label{eq:stoch4}
	\begin{split}
	\frac{1}{(1+\eta)^n} \limsup_{t\to+\infty}&\frac{\m_\omega(u_{M_2},t\tilde Q)}{t^n|\tilde Q|}	\le 	W_{\rm hom}(\omega,M_1)\\
		&+C\Big(|M_1+M_1^T|^p  + |M_2+M_2^T|^p+\frac1{\eta^p}|{M_1}-{M_2}|^p+1 \Big)\frac{((1+\eta)^n-1)}{(1+\eta)^n}\,.
	\end{split}
\end{equation}
Thus by the arbitrariness of $M_1$ we can choose a sequence $(M^j_1)\subset \Q^{n\times n}$ converging to $M_2$ such that by combining \eqref{eq:stoch3} and \eqref{eq:stoch4} we obtain
\begin{equation*}
\limsup_{t\to+\infty}\frac{\m_\omega(u_{M_2},t\tilde Q)}{t^n|\tilde Q|}\le 
\liminf_{t\to+\infty}\frac{\m_\omega(u_{M_2},t\tilde Q)}{t^n|\tilde Q|}+ C\Big( |M_2+M_2^T|^p+1\Big)((1+\eta)^n-1)\,.
\end{equation*}
Eventually by letting $\eta\to0$ we find that for all $\omega\in\O'$ there exists the limit 
\begin{equation}
	\lim_{t\to+\infty}\frac{\m_\omega(u_{M_2},t\tilde Q)}{t^n|\tilde Q|}=
	\limsup_{t\to+\infty}\frac{\m_\omega(u_{M_2},Q_t(0))}{t^n}=W_{\rm hom}(\omega,M_2)\,,
\end{equation}
and is independent of $x$.
Eventually if $W$ is ergodic then \cite[Theorem 2.4]{AK81} ensures that $W_{\rm hom}$ does not depend on $\omega$. Moreover by Birkhoff ergodic Theorem we deduce \eqref{eq:lim-stoch-hom2}.
\end{proof}
\section*{Acknowledgments}
\noindent
The work of FS was partially supported by the project \emph{Variational methods for stationary and evolution problems with singularities and interfaces} PRIN 2017 (2017BTM7SN) financed by the Italian Ministry of Education, University, and Research and by the project Starplus 2020 Unina Linea 1 \emph{New challenges in the variational modeling of continuum mechanics} from the University of Naples ``Federico II'' and Compagnia di San Paolo (CUP: E65F20001630003).
He is also member of the GNAMPA group of INdAM.

\setcounter{section}{0}
\renewcommand{\thesection}{A.\arabic{section}}
\appendix 
\section{A remark on the non-local approximation of free-discontinuity problems in $GSBV$} \label{sec:appendix}
\setcounter{theorem}{0}
\setcounter{equation}{0}
\renewcommand{\theequation}{A.\arabic{equation}}
\renewcommand{\thetheorem}{A.\arabic{theorem}}
This Appendix is devoted to the statement of a $\Gamma$-convergence Theorem for non-local functionals depending on the full deformation gradient $\nabla u$. The result we are going to state has actually been proved in \cite[Theorem 3.2]{Cortesani}, under an additional technical assumption, the so called {\it stable $\gamma$-convergence} of the functionals
\begin{equation}\label{Efull}
	\tilde{E}_k(u,A)\defas\begin{cases}
		\displaystyle\int_A  W_k(x,\nabla u)
		\dx& \text{ if } u\in W^{1,p}(A;\R^n)\,,\\
		+\infty&\text{ otherwise }\,.
	\end{cases}
\end{equation}
This assumption, stated in \cite[Definition 7.2]{Cortesani} is stronger than simple $\Gamma$-convergence, and introduces a limitation to the class of functionals to which the theorem applies, although relevant examples fulfilling this condition can be readily provided (see  \cite[Examples 7.3-7.5]{Cortesani}). Actually, the inspection of the proof of Proposition \ref{blow-up}, which can be clearly adapted to the $GSBV$ setting, shows that it is not needed. For the reader's convenience we give a precise statement of the result, after recalling the structural assumptions on the non-local approximation energies under which it is formulated.

The functions $W_k$ are assumed to satisfy \ref{hyp:conv+lsc-W}--\ref{hyp:min-W}, together with
\begin{enumerate}[label*=$(W4^\prime)$]
\item\label{hyp:growth} for every $x\in \R^n$ and every $M\in \mathbb{M}^{n\times n}$
	\begin{equation*}
		c_1|M|^p\le W_k(x,M)\le c_2(|M|^p+1)\,.
	\end{equation*}
\end{enumerate}
We will denote with $\tilde E$ the $\Gamma$-limit with respect to the convergence in measure of the functionals $\tilde E_k$ in \eqref{Efull}, given by (see \cite[Theorem 20.4]{DM})
\[
	\tilde{E}(u,A)\defas\begin{cases}
		\displaystyle\int_A  W(x,\nabla u)
		\dx& \text{ if } u\in W^{1,p}(A;\R^n)\,,\\
		+\infty&\text{ otherwise }\,.
	\end{cases}
\]
where, for every $x\in u$ and every $M\in \mathbb{M}^{n\times n}$
\begin{equation}\label{Wfull}
W(x,M)=W'(x, M)=W''(x, M)\,.
\end{equation}
Above, $W'$ and $W''$ are defined in \eqref{eq:cell-formula-inf}, and  \eqref{eq:cell-formula-sup}, respectively, provided that $E_k$ is replaced by $\tilde E_k$.
We then consider the non-local functionals 
\begin{equation}\label{Ffull}
	\tilde F_k(u)\defas\begin{cases}
		\displaystyle\frac1{\e_k}\int_U f_k\Big(\e_kW_k(\cdot,\nabla u)*\rho_k(x)
	\Big)\dx& \text{ if } u\in W^{1,p}(U;\R^n)\,,\\
		+\infty&\text{ otherwise }.
	\end{cases}
\end{equation}
where $\rho_k$ are as in Section \ref{subs:setting}, while $f_k\colon[0, +\infty)\to [0, +\infty)$ are concave and satisfy
\begin{equation}\label{cortesanifk}
a_1 t \wedge b_1 \le f_k(t) \le b_2
\end{equation}
for suitable uniform constants $a_1$, $b_1$, $b_2 >0$. We then have the following theorem.

\begin{theorem}\label{cortesani-improved}
Assume  \ref{hyp:conv+lsc-W}, \ref{hyp:min-W}, and \ref{hyp:growth}. Consider a sequence of concave functions $f_k$ as in \eqref{cortesanifk} and convolution kernels $\rho_k$ as in Section  \ref{subs:setting}. Let the functionals $\tilde F_k$ be given by \eqref{Ffull}. Finally, assume that
\begin{equation}\label{cortesanifk+}
\alpha_k t \wedge b_1 \le f_k(t) \le b_2 \quad \mbox{with}\quad \lim_{k\to +\infty}\alpha_k-f^\prime_k(0)=0\,.
\end{equation}
Then $\tilde F_k$ $\Gamma$-converge, with respect to the convergence in measure, to a functional of the form
\[
\alpha \int_U  W(x,\nabla u)
		\dx+ \int_{J_u}\varphi(x, [u], \nu_u)\dHn
\]
where $W$ is given by \eqref{Wfull}, $\alpha=\liminf f_k^\prime(0)$, and $\varphi$ is a suitable Carath\'eodory integrand.
\end{theorem}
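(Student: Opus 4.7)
\textbf{Proof plan for Theorem \ref{cortesani-improved}.} I would adapt the four-step scheme of Theorem \ref{thm:main-theorem} to the full-gradient/$GSBV^p$ setting, with the varying sequence $f_k$ in place of the fixed $f$. The starting compactness follows from a straightforward adaptation of Proposition \ref{prop:compactness}: the truncation construction there only uses the lower bound $f_k(t) \ge a_1 t \wedge b_1$ (which is \eqref{cortesanifk}), the $W^{1,p}$-type growth \ref{hyp:growth}, and coarea-based arguments. Combined with Ambrosio's $SBV$ compactness theorem (the full-gradient counterpart of Theorem \ref{thm:GSBD-comp}), it produces a limit $u \in GSBV^p(U;\R^n)$ such that, possibly after the modification $\bar u_k$, $\nabla u_k \wto \nabla u$ in $L^p$ outside $U^\infty$ and $\liminf \mathcal H^{n-1}(J_{u_k}) \ge \mathcal H^{n-1}(J_u \cup (\partial^* U^\infty \cap U))$.

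The core step is the bulk lower bound, obtained via the blow-up of Proposition \ref{blow-up} exactly as pointed out in Remark \ref{cortesani}. At a Lebesgue point $x_0$ of the diffuse part of the localized $\Gamma$-limit, I would blow up the sequence, apply the truncation produced in the compactness step, and reduce to a competitor bounded in $W^{1,p}$ on a shrinking cube. Here the $GSBD$-Korn step of \cite[Lemma 5.1]{FPS} is replaced by its $GSBV$ analogue, which is substantially simpler since smooth truncations are available in $GSBV$ and Poincar\'e's inequality applies directly. The concavity of $f_k$ together with \eqref{cortesanifk+} yields $f_k(t) \le f_k'(0)\, t$ with $f_k'(0) \to \alpha = \liminf f_k'(0)$, so the blow-up produces exactly $\alpha\, W(x_0, \nabla u(x_0))$, with $W$ characterized by the cell formulas \eqref{eq:cell-formula-inf}--\eqref{eq:cell-formula-sup} written for $\tilde E_k$, which is precisely \eqref{Wfull}.

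For the surface part and the identification of $\varphi$, I would invoke the localization method (\cite[Chapters 16--20]{DM}): extract a subsequence for which $A \mapsto \tilde F(u,A)$ is defined as a $\Gamma$-limit on a dense family of open sets, establish the fundamental estimate by exploiting the concavity of $f_k$ and the uniform upper bound in \eqref{cortesanifk}, and deduce that the $\Gamma$-limit is the trace on $GSBV^p$ of a Borel measure. With the bulk density already identified, a Bouchitt\'e--Fonseca--Leaci--Mascarenhas-type integral representation theorem in $GSBV^p$ then yields the surface part in the form $\int_{J_u}\varphi(x,[u],\nu_u)\dHn$ with $\varphi$ Carath\'eodory; the dependence on $[u]$ (absent in the $GSBD$ case) is made possible precisely by the availability of smooth truncations. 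The upper bound is obtained, as in Proposition \ref{prop:upper-bound}, by approximating $u \in GSBV^p$ with piecewise smooth competitors (via the classical density result of Cortesani--Toader) and combining a cutoff around $J_u$ with the recovery sequence for $\tilde E_k$ delivered by Theorem \ref{thm:conv-in-sobolev}, together with the asymptotic identity $\mathcal L^n((J_u)_{2\delta_k + \e_k})/\e_k \to \int_{J_u} \phi_\rho(\nu_u)\dHn$.

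The main obstacle I expect is twofold. First, identifying $\varphi$ as a Carath\'eodory integrand requires setting up the integral representation machinery for the non-local approximation class and establishing the fundamental estimate in a form strong enough to extract the measure property of the $\Gamma$-limit; this is delicate but classical. Second, and crucial for the \emph{improvement} over \cite{Cortesani}, one must verify carefully that the cell-formula characterization of $W$ from Proposition \ref{blow-up} goes through in the $GSBV$ setting without any additional structural assumption on the $W_k$: this is exactly the content of the blow-up argument of Section \ref{sec:lb}, combined with Remark \ref{cortesani}, which replaces the stable $\gamma$-convergence hypothesis required in \cite[Theorem 3.2]{Cortesani}.
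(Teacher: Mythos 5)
Your plan is mathematically sound but misses the structural shortcut the paper relies on, which makes the actual argument much shorter. The paper does not rebuild the $\Gamma$-convergence from scratch. It invokes \cite[Theorem 3.1]{Cortesani} directly, which — without any stable $\gamma$-convergence hypothesis — already delivers the subsequential $\Gamma$-limit as an integral functional
\[
\int_U W_\infty(x,\nabla u)\dx + \int_{J_u}\varphi(x,[u],\nu_u)\dHn
\]
for some bulk density $W_\infty$ and some Carath\'eodory surface integrand $\varphi$. So the compactness, the localization/fundamental estimate machinery, and the integral representation step — everything you flagged as ``the main obstacle'' — are all pre-fabricated and simply cited. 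The only thing that has to be re-proved is the \emph{identification} $W_\infty = \alpha W$. The inequality $W_\infty \le \alpha W''$ is itself already available in \cite[Proposition 7.1]{Cortesani}, again without the stable $\gamma$-convergence hypothesis. The sole point where Cortesani needed stable $\gamma$-convergence is the lower bound $W_\infty \ge \alpha W'$, and this is precisely what the blow-up Proposition \ref{blow-up} (as adapted by Remark \ref{cortesani}, replacing $e(u)$ by $\nabla u$ and allowing $f_k$ with $\alpha_k \to \alpha$) provides.

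You correctly identified the heart of the improvement — that the blow-up argument of Section \ref{sec:lb} supersedes the stable $\gamma$-convergence assumption — and your sketch of that step is aligned with the paper. But your proposal to independently establish compactness, run the localization method, prove the fundamental estimate for the nonlocal approximating functionals, and set up a $GSBV^p$ integral representation theorem is a substantial and unnecessary detour; it would also obscure the point the appendix is making, namely that \emph{one single step} in Cortesani's original argument is being upgraded. If you were to carry out your from-scratch plan, the integral-representation step for the surface density $\varphi$ (with genuine dependence on $[u]$) is genuinely delicate, and you would essentially be reproving Cortesani's Theorem 3.1 rather than using it.
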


\begin{proof} By \cite[Theorem 3.1]{Cortesani} we have that the $\Gamma$-limit of $\tilde F_k$ is an integral functional of the form
\[
\int_U  W_\infty(x,\nabla u)
		\dx+ \int_{J_u}\varphi(x, [u], \nu_u)\dHn\,.
\]
For $W'$ and $W''$ as in \eqref{eq:cell-formula-inf}, and  \eqref{eq:cell-formula-sup}, respectively, one has only to show that $W_\infty\le \alpha W''$ and $W_\infty\ge \alpha W'$. The first inequality is actually already proved in \cite[Proposition 7.1]{Cortesani}. As for the second, notice under assumption \eqref{cortesanifk+} and taking into account Remark \ref{cortesani}, it can be recovered by exactly following the argument of Proposition \ref{blow-up}, provided one is willing to replace each occurrence of $e(u)$ with $\nabla u$.
\end{proof}


\begin{thebibliography}{}
	\bibitem{AK81}
	{\sc M.~A.~Akcoglu and U.~Krengel},
	\newblock{Ergodic theorems for superadditive processes},
	\newblock {\em J. Reine Angew. Math.} {\bf 323} (1981), 53--67.
	\bibitem{AFP} 
	{\sc L.~Ambrosio, N.~Fusco, and D.~Pallara},
	\newblock {\em Functions of bounded variation and free discontinuity problems},
	\newblock Oxford Mathematical Monographs, The Clarendon Press Oxford University
	Press, New York, 2000.
	\bibitem{BMZ}
	{\sc A.~Bach, R.~Marziani, and C. I. Zeppieri},
	\newblock{$\Gamma$-Convergence and stochastic homogenisation of singularly perturbed elliptic functionals},
	\newblock{\em 	arXiv:2102.09872} (2021).
	\bibitem{BFLM}
	{\sc G.~Bouchitt\`e, I.~Fonseca, G.~Leoni, and L.~Mascarenhas}, 
	\newblock{A global method for relaxation in $W^{1,p}$ and $SBV_p$}, 
	\newblock \emph{Arch. Ration. Mech. Anal.} {\bf 165} (2002), 187--242.
	\bibitem{Br}
	{\sc A.~Braides},
	\newblock{Approximation of free-discontinuity problems},
	\newblock{Springer Science \& Business Media}, (1998).
	\bibitem{Braides}
	{\sc A.~Braides},
	\newblock{\em $\Gamma$-convergence for beginners},
	\newblock{Oxford University Press, Oxford}, 2002. 
	\bibitem{BraDal97}
		{\sc A.~Braides, and G.~Dal~Maso},
		\newblock{Non-local approximation of the Mumford-Shah functional},
		\newblock{\em Calc. Var.}, {\bf 5} (1997), 293--322.
		\bibitem{CCS}
		{\sc F.~Cagnetti, A.~Chambolle, and L.~Scardia},
		\newblock{Korn and Poincaré-Korn inequalities for functions with a small jump set},
		\newblock{\em Math. Ann.}, {\bf 383} (2022), 1179--1216.
		\bibitem{Chamb-Cris2019}
		{\sc A.~Chambolle, and V.~Crismale},
		\newblock{A density result in $GSBD^p$ with applications to the approximation of brittle fracture energies},
		\newblock{Arch. Rational Mech. Anal.}, {\bf 232} (2019), 1329-1378.
			\bibitem{Chamb-Cris2021}
		{\sc A.~Chambolle, and V.~Crismale},
		\newblock{Compactness and lower semicontinuity in $GSBD$},
		\newblock{\em J. Eur. Math. Soc.}, {\bf 23} (2021), no. 3, 701--719.
\bibitem{Cortesani}
{\sc G.~Cortesani},
\newblock{Sequences of non-local functionals which approximate free-discontinuity problems},
\newblock{\em Archive for Rational Mechanics and Analysis}, {\bf 144} (1998), 357--402.		
\bibitem{Cortesani-Toader-Density}		
		{\sc G.~Cortesani, and R.~Toader},
		\newblock{A density result in $SBV$ with respect to non-isotropic energies},
		\newblock{\em Nonlinear Anal.}, {\bf 38} (1999), 585--604.
	\bibitem{DM}
	{\sc G.~Dal~Maso},
	\newblock {\em An introduction to {$\Gamma$}-convergence},
	\newblock Progress in Nonlinear Differential Equations and their Applications,
	vol. 8, Birkh{\"a}user Boston Inc., Boston, MA, 1993.
		\bibitem{DM2013}
	{\sc G.~Dal~Maso},
	\newblock{Generalised functions of bounded deformation},
	\newblock{\em J. Eur. Math. Soc.}, {\bf 15} (2013), 1943--1997.
	\bibitem{DMM}
	{\sc G.~Dal~Maso and L.~Modica}, 
	\newblock{Nonlinear stochastic homogenization and ergodic theory},
	\newblock{\em J. Reine Angew. Math.} {\bf 368} (1986), 28--42.
	\bibitem{DePhFuPra}
	{\sc G.~De~Philippis, N.~Fusco, and A.~Pratelli},
	\newblock{On the approximation of $SBV$ functions},
	\newblock{\em Atti Accad. Naz. Lincei Cl. Sci. Fis. Mat. Natur.}, {\bf 28} (2017), no. 2, 369--413.
	\bibitem{EvGar}
	{\sc L.~Evans, and R.~Gariepy},
	\newblock{\em Measure theory and fine properties of functions, revised edition},
	\newblock{CRC Press}, 2015.
	\bibitem{FarSciSol}
	{\sc F.~Farroni, G.~Scilla, and F.~Solombrino}
	\newblock{ On some non-local approximation of nonisotropic Griffith-type functionals},
	\newblock{\em Mathematics in Engineering}, {\bf 4(4)} (2022), 1--22. doi: 10.3934/mine.2022031
	 \bibitem{FPS}
		{\sc M.~Friedrich, M.~Perugini, and F.~Solombrino}, 
		\newblock {$\Gamma$-convergence for free-discontinuity problems in linear elasticity: Homogenization and relaxation},
		\newblock{\em Indiana Univ. Math. J.} (to appear).
		\bibitem{Lussardi-Villa}
		{\sc L.~Lussardi, and E.~Villa},
		\newblock{A general formula for the anisotropic outer Minkowsky content of a set},
		\newblock{\em proc. Roy. Soc. Edinburgh Sect. A}, {\bf 146} (2016), 393-413.
		\bibitem{Negri2006}
		{\sc M.~Negri},
		\newblock{A non-local approximation of free discontinuity problems in $SBV$ and $SBD$}, 
		\newblock{\em Calc. Var.}, {\bf 25} (2006), 33--62.
	\bibitem{RufRuf}
	{\sc M.~Ruf, and T.~Ruf}, 
	\newblock{Stochastic homogenization of degenerate integral functionals and their Euler-Lagrange equations}, 
	\newblock{\em Arxiv preprint: 2109.13013 } (2021).
	\bibitem{SS21}
	{\sc G.~Scilla, and F.~Solombrino},
	\newblock{Non-local approximation of the Griffith functional},
	\newblock{\em NoDea Nonlinear Differential Equations Appl.}, {\bf 28:17} (2021).
	
\end{thebibliography}
\end{document}